\newcommand{\vol}{\mathrm{vol}}
\DeclareMathOperator*{\diam}{diam}
\DeclareMathOperator*{\Capacity}{Cap}
\begin{document}

% =========
% = Title =
% =========

\title{Heat content asymptotics of some random Koch type snowflakes}

\author{Philippe H.\ A.\ Charmoy}

\address{Mathematical Institute, Andrew Wiles Building, Radcliffe Observatory Quarter, Oxford OX1 6GG, United Kingdom}
\email{charmoy@maths.ox.ac.uk}
\date{\today.}

\maketitle

% ============
% = Abstract =
% ============

\begin{abstract}
We consider the short time asymptotics of the heat content $E$ of a domain $D$ of $\R^d$. The novelty of this paper is that we consider the situation where $D$ is a domain whose boundary $\partial D$ is a random Koch type curve.

When $\partial D$ is spatially homogeneous, we show that we can recover the lower and upper Minkowski dimensions of $\partial D$ from the short time behaviour of $E(s)$. Furthermore, in some situations where the Minkowski dimension exists, finer geometric fluctuations can be recovered and the heat content is controlled by $s^\alpha e^{f(\log(1/s))}$ for small $s$, for some $\alpha \in (0, \infty)$ and some regularly varying function $f$. The function $f$ is not constant is general and carries some geometric information.

When $\partial D$ is statistically self-similar, then the Minkowski dimension and content of $\partial D$ typically exist and can be recovered from $E(s)$. Furthermore, the heat content has an almost sure expansion $E(s) = c s^{\alpha} N_\infty + o(s^\alpha)$ for small $s$, for some $c$ and $\alpha \in (0, \infty)$ and some positive random variable $N_\infty$ with unit expectation arising as the limit of some martingale.
\end{abstract}

% ===================================
% = Keywords and AMS classification =
% ===================================

 \keywords{\footnotesize \emph{Keywords}. Heat content asymptotics, Minkowski dimension and content, Koch snowflake.}
 
 \subjclass{\footnotesize \emph{AMS 2010 subject classifications}. Primary: 35K05, 28A80. Secondary: 35P20, 60J85.}

% =============================
% = Main sections of the text =
% =============================

% ================
% = Introduction =
% ================

\section{Introduction}

Let $D$ be a bounded domain in $\R^d$ and $\Delta$ be the Dirichlet Laplacian. Then the spectrum $\Lambda$ of $-\frac 12 \Delta$ is discrete and of the form
$$
0 < \lambda_1 \leq \lambda_2 \leq \cdots,
$$
where the eigenvalues are repeated according to their multiplicity. Interest in the geometric information about $D$ contained in $\Lambda$ started more than 100 years ago and was crystallised by Kac in his paper \cite{Kac1966} entitled \emph{Can one hear the shape of a drum?} In other words: Are isospectral domains always isometric? The answer is no in general, as shown, for example, in \cite{Buseretal1994, Gordonetal1992, Milnor1964}. But it is natural to enquire how much information about the geometry of $D$ is encoded by $\Lambda$, and, when $D$ is a random domain, how much of its distribution can be recovered.

As the spectral decomposition of the heat kernel with absorption on the boundary $p_D$ indicates, the heat content
$$
E_D(s) = \int_D \left(1- \int_D p_D(s,x,y) dy \right) dx
$$
provides a natural proxy for the eigenvalues of $- \frac 12 \Delta$. Recall that $E$ may also be expressed more intuitively as
\begin{equation}\label{eq::defHeatContent}
E_D(s) = \int_D u_D(s,x) dx,
\end{equation}
where $u_D$ is the solution to the heat equation with unit Dirichlet boundary condition and zero initial condition, i.e.\ is the solution to
\begin{equation}\label{eq::heatEquationSpecProblems}
\begin{aligned}
	\partial_s u_D(s,x) &= \frac 12 \Delta u_D(s,x), &
	(s, x) \in (0, \infty) \times D,\\
	u_D(0, x) & = u_D(0+,x) = 0, &
	x \in D,\\
	u_D(-, x) & = 1,&
	x \times \partial D.
\end{aligned}
\end{equation}
We will omit the dependence on $D$ from the notation when there is no risk of confusion.

The heat content presents the advantage that it is amenable to probabilistic techniques. Furthermore, it is a convenient object to recover information about the geometry of the boundary of $D$. Interest in this naturally intensified when Berry studied the spectral properties of domains with a fractal boundary in \cite{Berry1979, Berry1980} and conjectured that the Hausdorff dimension of $\partial D$ should be encoded by $\Lambda$. This was first disproved by \cite{BC1986} who showed that the Minkowski dimension was the relevant measure of roughness.

Since then, the short time asymptotics of the heat content have been studied extensively. Planar domains with polygonal boundary are discussed in \cite{vdBS1988, vdBS1990}. Some domains with fractal boundary, including the triadic Koch snowflake, are discussed in \cite{FLV1994, LV1996, vdBdH1999}. In \cite{vdB1994}, van den Berg proved that, under some regularity conditions, if the Minkowski dimension $\gamma$ of $\partial D$ exists, then
$$
E(s) \asymp s^{(d-\gamma)/2}
$$
for small $s$, where $f(x) \asymp g(x)$ means that $c^{-1} f(x) \leq g(x) \leq c f(x)$ for some $c \in (0, \infty)$; this is the heat content analogue of the results in \cite{BC1986}.

Here, we study the heat content asymptotics of two families of Koch type snow\-flakes, thereby addressing a situation closely related to \cite{FLV1994}. First, we discuss scale homogeneous snowflakes whose  boundary is constructed from a sequence $(\xi_n, n \in \N)$ of natural numbers determining the size and number of linear pieces used at each iteration of the construction throughout the set. This is related to the Sierpinski gaskets discussed in \cite{Hambly1992, BH1997}. Second, we discuss snowflakes whose boundary is statistically self-similar, and so we use the theory of general branching processes to study their geometry and heat content.

The paper is organised as follows. In Section \ref{sec::heatContentEstimates}, we discuss estimates for the heat content along the lines of \cite{vdB1994} and use this to show that one can recover the lower and upper Minkowski dimension of $\partial D$ from the heat content in the following way.

\begin{theorem}
	Let $D$ be a bounded domain in $\R^d$. Write $\alpha$, respectively $\beta$, for the lower, respectively upper, Minkowski dimension of $\partial D$. Then, under the regularity conditions given in Assumptions \ref{ass::regularityBrownianMotion} and \ref{ass::capacitaryDensity},
	$$
	\liminf_{s \to 0} \left(\frac d2 - \frac{\log E(s)}{\log s} \right) = \frac{\alpha}{2} \quad \text{and} \quad \limsup_{s \to 0} \left(\frac d2 - \frac{\log E(s)}{\log s} \right) = \frac{\beta}{2}.
	$$
\end{theorem}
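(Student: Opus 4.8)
The plan is to reduce the statement to the two-sided comparison between the heat content $E(s)$ and the volume of a $\sqrt{s}$-thick neighbourhood of $\partial D$, and then to pass to the limit after a change of variables $\epsilon = \sqrt s$ and an appeal to the definitions of the lower and upper Minkowski dimensions. The probabilistic heuristic behind the comparison is that, in time $s$, the Dirichlet heat can penetrate $D$ only to depth of order $\sqrt s$, so that $u_D(s,x)$ is appreciable exactly on the inner collar $\{x \in D : \operatorname{dist}(x,\partial D) \lesssim \sqrt s\}$ and $E(s)$ is governed by the volume of that collar.

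First I would invoke the heat content estimates developed in Section~\ref{sec::heatContentEstimates} along the lines of \cite{vdB1994}, which, crucially, do not presuppose existence of the Minkowski dimension. Under Assumptions~\ref{ass::regularityBrownianMotion} and~\ref{ass::capacitaryDensity} these furnish constants $0 < c_1 \le c_2 < \infty$ and $0 < a_1 \le a_2 < \infty$ such that, writing $(\partial D)_\epsilon$ for the $\epsilon$-neighbourhood of $\partial D$ (or the relevant inner collar, whose dimension is the same),
$$
c_1 \, \vol\!\big((\partial D)_{a_1 \sqrt s}\big) \;\le\; E(s) \;\le\; c_2 \, \vol\!\big((\partial D)_{a_2 \sqrt s}\big)
$$
for all sufficiently small $s$. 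This is the form, uniform in $s$, of van den Berg's $E(s) \asymp s^{(d-\gamma)/2}$.

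Next I would take logarithms throughout and divide by $\log s$, which is negative and tends to $-\infty$; the multiplicative constants $c_1, c_2$ then contribute a term $O(1)/\log s \to 0$, while the dilation constants $a_1, a_2$ are absorbed in the limit because, setting $\epsilon = a\sqrt s$ so that $\log s = 2(\log \epsilon - \log a)$, one has
$$
\frac{\log \vol\!\big((\partial D)_{a\sqrt s}\big)}{\log s} = \frac12 \, \frac{\log \vol\!\big((\partial D)_\epsilon\big)}{\log \epsilon} \cdot \frac{\log \epsilon}{\log \epsilon - \log a},
$$
where the last factor tends to $1$ and $\log \vol((\partial D)_\epsilon)/\log\epsilon$ stays bounded. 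Consequently both the upper and the lower bound, after this manipulation, sandwich $\log E(s)/\log s$ between two quantities having the same liminf and the same limsup, namely those of $\tfrac12 \log \vol((\partial D)_\epsilon)/\log\epsilon$ as $\epsilon \to 0$. Finally, since $s \mapsto \sqrt s$ is a continuous increasing bijection near $0$, passing to liminf and limsup and using the minus sign in front of the ratio (which interchanges liminf and limsup) gives
$$
\liminf_{s \to 0}\!\left(\frac d2 - \frac{\log E(s)}{\log s}\right) = \frac12\!\left( d - \limsup_{\epsilon \to 0} \frac{\log \vol\!\big((\partial D)_\epsilon\big)}{\log \epsilon} \right) = \frac\alpha2,
$$
and likewise the limsup equals $\beta/2$, by the respective definitions of the lower and upper Minkowski dimensions.

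I expect the only genuine difficulty to lie upstream, in proving the two-sided bound of the first step under the stated regularity hypotheses; once that estimate is available the present theorem is a bookkeeping exercise with logarithms. The two points deserving care are the sign tracking in dividing by the negative quantity $\log s$ (which swaps the roles of liminf and limsup), and the verification that the dilation constants inside the neighbourhood radius do not perturb the extremal limits, both of which are handled by the elementary observations above.
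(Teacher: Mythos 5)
Your reduction and the logarithmic bookkeeping at the end are fine (the sign handling, the absorption of multiplicative and dilation constants, and the liminf/limsup swap are all correct), but the proof has a genuine gap at its very first step: the two-sided estimate
$$
c_1\,\mu(a_1\sqrt s)\;\le\;E(s)\;\le\;c_2\,\mu(a_2\sqrt s)
$$
is \emph{not} furnished by the estimates of Section \ref{sec::heatContentEstimates}, and its upper half is exactly the crux of the matter. The lower half is indeed Theorem \ref{thm::vdBLowerBound}. But the two available upper bounds have a different form: Theorem \ref{thm::upperBoundHeatContent} gives $E(s)\le\mu(\omega(s))+2^{(d+2)/2}\vol_d(D)e^{-\omega(s)^2/4s}$, and if you take $\omega(s)=a\sqrt s$ the error term is the \emph{constant} $2^{(d+2)/2}\vol_d(D)e^{-a^2/4}$, which does not vanish as $s\to0$ and therefore swamps $\mu(a\sqrt s)\to 0$; while Theorem \ref{thm::vdBUpperBound} is an integral over \emph{all} scales and cannot be collapsed to the single scale $\sqrt s$ without some regularity of $\mu$ --- unavailable precisely in the interesting case where $\mu$ oscillates and the Minkowski dimension fails to exist. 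One can in fact rescue your estimate, but only by adding a nontrivial geometric ingredient you neither prove nor mention: a doubling property $\mu(2\epsilon)\le C_d\,\mu(\epsilon)$ for inner collars (proved by a covering argument), combined with a dyadic shell decomposition of $D\setminus(\partial D)_{a\sqrt s}$ and the Gaussian bound \eqref{eq::gaussianEstimate}. Deferring this to an estimate ``available upstream'' is not legitimate here, because for this theorem that estimate \emph{is} the proof.

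The paper's own argument is designed precisely to avoid needing a single-scale upper bound. For the liminf, where the definition of $\underline{\dim}_M\partial D$ controls $\mu$ only along a subsequence $(\epsilon_n)$, it applies Theorem \ref{thm::upperBoundHeatContent} with the logarithmically widened collar $\omega(s)=\sqrt{2ds\log(1/s)}$, so that the Gaussian error becomes $O(s^{d/2})$ and the factor $\log(1/s)^{(d-(\alpha+\delta))/2}$ is harmless at the level of log-asymptotics; this is evaluated along the times $s_n$ defined by $\omega(s_n)=\epsilon_n$. For the limsup, where the definition gives the uniform bound $\mu(\epsilon)\le\epsilon^{d-\beta-\delta}$ for all small $\epsilon$, the all-scales integral bound of Theorem \ref{thm::vdBUpperBound} applies directly. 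You should either adopt this route, or state and prove the doubling-plus-shell-decomposition lemma as a preliminary step; as written, the proposal assumes the hardest part.
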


Intuitively, this means that \emph{one can hear the lower and upper Minkowski dimensions of $\partial D$}, and in particular determine whether they are equal.

In Section \ref{sec::scaleHomogeneous}, we detail the construction of the family of scale homogeneous snow\-flakes mentioned above. We then look at their Minkowski dimension and content and use that to study their heat content. In particular, we show that when the sequence $(\xi_n, n \in \N)$ used to build the snowflake is stationary and ergodic, the Minkowski dimension exists, and discuss how to construct examples where it does not. Furthermore, we show that the rate of convergence in the ergodic theorem dictates the short time asymptotics of the heat content. An important example is when $(\xi_n, n \in \N)$ is i.i.d., in which case we use the law of the iterated logarithm to prove the following theorem.

\begin{theorem}
	Let $D$ be a scale homogeneous snowflake constructed with an i.i.d.\ sequence. Then the Minkowski dimension $\gamma$ of the boundary of the snowflake exists almost surely. Furthermore, under the regularity conditions given in Assumptions \ref{ass::regularityBrownianMotion} and \ref{ass::capacitaryDensity}, for some positive constants $c_1, \dots, c_6$, we have
$$
c_1 s^{1- \gamma/2} e^{-c_2 \psi(1/s)} \leq E(s) \leq c_3 s^{1- \gamma/2} e^{c_4 \psi(1/s)}
$$
for small $s$, while
$$
\liminf_{s \to 0} \frac{E(s) e^{c_5 \psi(1/s)}}{s^{1-\gamma/2}} < \infty \quad \text{and} \quad \limsup_{s \to 0} \frac{E(s) e^{-c_6 \psi(1/s)}}{s^{1-\gamma/2}} > 0,
$$
where
$$
\psi(x) = \sqrt{\log x \log \log \log x}.
$$
\end{theorem}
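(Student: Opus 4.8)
The plan is to reduce everything to the asymptotics of two coupled sums of i.i.d.\ random variables and then invoke the law of the iterated logarithm. Write $N_n$ for the number of linear pieces and $\delta_n$ for their common length after $n$ iterations of the construction. By the scale homogeneous i.i.d.\ structure, $\log N_n = \sum_{k=1}^n X_k$ and $\log(1/\delta_n) = \sum_{k=1}^n Y_k$, where $X_k = X(\xi_k)$ and $Y_k = Y(\xi_k)$ are i.i.d.\ with positive means $\mu_X = \mathbb{E}X_1$, $\mu_Y = \mathbb{E}Y_1$ and (we assume) finite variances. The $\delta_n$-neighbourhood of $\partial D$ has area $\asymp N_n \delta_n^2$ and needs $\asymp N_n$ boxes of side $\delta_n$ to be covered, so the lower and upper Minkowski dimensions are the $\liminf$ and $\limsup$ of $\log N_n/\log(1/\delta_n) = (\sum X_k)/(\sum Y_k)$, interpolated across scales. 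Since single increments are $o(n)$ almost surely, the interpolation between consecutive levels is harmless on the logarithmic scale, and the strong law of large numbers makes both limits equal to $\gamma = \mu_X/\mu_Y$ almost surely. This proves the existence of the Minkowski dimension.

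For the heat content (here $d = 2$, so $s^{1-\gamma/2} = s^{(d-\gamma)/2}$) I would use the van den Berg type estimates of Section \ref{sec::heatContentEstimates} to compare $E(s)$ with the area of the $\sqrt{s}$-neighbourhood of $\partial D$. Let $n(s) = \max\{m : \delta_m \ge \sqrt{s}\}$ be the level at which the pieces are comparable to the diffusion length. At this resolution $\partial D$ looks like $N_{n(s)}$ pieces of size $\asymp \sqrt{s}$, so its $\sqrt{s}$-neighbourhood has area $\asymp N_{n(s)}\, s$, and the refined estimates should yield $E(s) \asymp N_{n(s)}\, s$ with bounded multiplicative constants. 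Taking logarithms, $\log(E(s)/s^{1-\gamma/2}) = \log N_{n(s)} - \tfrac{\gamma}{2}\log(1/s) + O(1)$.

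The decisive observation is that this centred quantity is itself a mean-zero random walk. Writing $Z_k = X_k - \gamma Y_k$, we have $\mathbb{E}Z_k = \mu_X - \gamma\mu_Y = 0$, and since $\tfrac12\log(1/s) = \sum_{k=1}^{n(s)} Y_k + O(Y_{n(s)+1})$ by definition of $n(s)$, a short computation gives $\log(E(s)/s^{1-\gamma/2}) = W_{n(s)} + r(s)$ with $W_m = \sum_{k=1}^m Z_k$, where the error $r(s)$ combines the bounded comparison constant and the renewal overshoot and is $o(\psi(1/s))$ almost surely. By the strong law, $n(s) = \tfrac{1}{2\mu_Y}\log(1/s)\,(1+o(1))$, so $\log\log n(s) = \log\log\log(1/s) + o(1)$ and
$$
\sqrt{2\sigma_Z^2\, n(s)\log\log n(s)} = \sqrt{\frac{\sigma_Z^2}{\mu_Y}}\;\psi(1/s)\,(1+o(1)), \qquad \sigma_Z^2 = \mathbb{E}[Z_1^2].
$$
The law of the iterated logarithm for $W_m$ then transfers directly: its almost sure upper bound $|W_m| \le (1+\varepsilon)\sqrt{2\sigma_Z^2 m\log\log m}$ for large $m$ yields the two-sided bound $c_1 s^{1-\gamma/2}e^{-c_2\psi(1/s)} \le E(s) \le c_3 s^{1-\gamma/2}e^{c_4\psi(1/s)}$ with any $c_2, c_4 > \sqrt{\sigma_Z^2/\mu_Y}$, while the sharpness of the $\limsup$ and $\liminf$ of $W_m/\sqrt{2\sigma_Z^2 m\log\log m}$ at $\pm 1$ produces the infinitely-often statements with any $c_5, c_6 < \sqrt{\sigma_Z^2/\mu_Y}$.

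The main obstacle is the comparison $E(s) \asymp N_{n(s)}\, s$, which must hold with multiplicative constants uniform in $s$ so as not to wash out the $e^{\pm c\psi}$ fluctuation. This is exactly where the regularity Assumptions \ref{ass::regularityBrownianMotion} and \ref{ass::capacitaryDensity} enter: one must control the contributions to $E(s)$ from scales both coarser and finer than $\sqrt{s}$, using the capacitary and Brownian estimates to show that the fine structure below $\sqrt{s}$ is effectively filled in and that the coarse structure contributes only the tube volume. Two further steps are routine but necessary, namely passing from the LIL at deterministic times $m$ to the random index $n(s)$, and from the discrete levels to continuous $s$; both follow by monotonicity together with the fact that the overshoot $\sum_{k=1}^{n(s)} Y_k - \tfrac12\log(1/s)$ is $o(\psi(1/s))$ almost surely, so that it only affects the $O(1)$ and $o(\psi)$ errors already absorbed into $c_1, c_3$ and $r(s)$.
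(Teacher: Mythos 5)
Most of your outline mirrors the paper's argument: the identification $\gamma=\mu_X/\mu_Y$ via the strong law, the tube-volume estimate $\mu(\delta_n)\asymp N_n\delta_n^2$ (Lemma \ref{lem::tubNeighbourhoodScaleHomo}), and the reduction of the fluctuations to the law of the iterated logarithm for the centred walk $W_n=\log N_n-\gamma\log(1/\delta_n)=\log(M_nL_n^{-\gamma})$ are exactly the paper's Theorems \ref{thm::dimensionScaleHomo} and \ref{thm::boundsVolumeScaleHomo} (there phrased through the rate $\sup_a|p_a(n)-p_a|\le n^{-1}g(n)$ with $g(n)=\sqrt{n\log\log n}$). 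The lower bound on $E(s)$ via van den Berg's estimate $E(s)\ge c_1\mu(c_2 s^{1/2})$ (Theorem \ref{thm::vdBLowerBound}) and the limsup infinitely-often statement obtained along subsequences are also as in the paper.

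The genuine gap is the step you yourself flag as ``the main obstacle'': the upper bound $E(s)\lesssim N_{n(s)}\,s\asymp\mu(\sqrt{s})$ with constants uniform in $s$. No such estimate is available, and it cannot be extracted from the tools at hand: both Theorem \ref{thm::upperBoundHeatContent} and van den Berg's Theorem \ref{thm::vdBUpperBound} force you to evaluate $\mu$ at a scale $\omega(s)\asymp\sqrt{s\log(1/s)}$ rather than $\sqrt{s}$, because the Gaussian tail $e^{-\omega(s)^2/4s}$ of the heat penetrating beyond distance $\omega(s)$ only becomes negligible (of order $s$) when $\omega$ carries this logarithmic correction; with $\omega(s)=C\sqrt{s}$ the remainder term is a constant, which destroys the bound. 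Consequently the upper bound one actually obtains is $E(s)\le c\,\mu\bigl(\sqrt{4s\log(1/s)}\bigr)+O(s)\le c\,s^{1-\gamma/2}\bigl(\log(1/s)\bigr)^{1-\gamma/2}e^{c'\psi(1/s)}$, and the missing idea is the absorption step: the spurious factor $(\log(1/s))^{1-\gamma/2}=e^{(1-\gamma/2)\log\log(1/s)}$ can be swallowed into $e^{c''\psi(1/s)}$ precisely because $\psi(1/s)=\sqrt{\log(1/s)\log\log\log(1/s)}$ dominates $\log\log(1/s)$, i.e.\ because $g$ is regularly varying of positive index and not slowly varying. This is the content of the paper's Theorem \ref{thm::heatContentFluctuationRegularlyVarying} (the passage from \eqref{eq::technicalEqOne} to \eqref{eq::technicalEqTwo}), and the paper devotes a whole subsection to what goes wrong when this absorption is unavailable. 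Note also that your appeal to Assumption \ref{ass::capacitaryDensity} here is misdirected: the capacitary density condition is used only for the lower bound of Theorem \ref{thm::vdBLowerBound}; the upper bound needs no capacity input, only the tube-volume plus Gaussian-tail argument just described. The same issue recurs in your liminf infinitely-often statement, which requires the upper-bound machinery along the subsequence where $W_n\le-\tfrac12\sqrt{2\sigma_Z^2 n\log\log n}$, so it too needs the absorption argument rather than the asserted uniform comparison.
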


The function $\psi$ is in some sense the best possible, and this result intuitively implies that \emph{one can hear the law of the iterated logarithm}.

In Section \ref{sec::gbp}, we give a brief introduction to the theory of general branching processes and introduce the notation necessary to discuss statistically self-similar sets.

Finally, in Section \ref{sec::selfSimilar}, we detail the construction of our statistically self-similar snowflakes. Using the theory of general branching processes, we show that the Minkowski dimension and content of the boundary of these snowflakes typically exist and that this implies the following result for the heat content.

\begin{theorem}
	Under the regularity conditions given in Assumptions \ref{ass::regularityBrownianMotion} and \ref{ass::capacitaryDensity}, the heat content of the statistically self-similar snowflakes satisfies
	$$
	s^{-(1-\gamma/2)} E(s) \to c_7 N_\infty, \text{ a.s.\ and in } L^1,
	$$
	as $s \to 0$, for some positive constant $c_7$ and some positive random variable $N_\infty$ with unit expectation arising as the limit of some martingale.
\end{theorem}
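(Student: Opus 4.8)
The plan is to combine the heat-content estimates from Section~\ref{sec::heatContentEstimates} (along the lines of van den Berg's framework) with the convergence of an appropriate martingale attached to the general branching process, so that the geometric self-similarity of $\partial D$ is transferred to the heat content. The key geometric input is that, for the statistically self-similar snowflake, the Minkowski content exists and equals $c\, N_\infty$ for a positive random variable $N_\infty$; this $N_\infty$ should arise precisely as the almost sure and $L^1$ limit of the intrinsic martingale of the general branching process introduced in Section~\ref{sec::gbp}, and its unit expectation comes from normalising by $\E$. Concretely, I would first recall from the geometric analysis that the boundary decomposes into statistically self-similar pieces indexed by the branching tree, and that the renewal/Malthusian theory for the associated counting process yields $N_\infty$ together with the exact exponent $\gamma$ equal to the almost sure Minkowski dimension.

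\smallskip

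Next I would set up the decomposition of the heat content along the branching structure. Writing the snowflake domain as a union of a fixed compact core together with the countable family of scaled copies of the generators attached at each node of the tree, I would exploit the locality of the heat content at short times: for small $s$, the dominant contribution to $E(s)$ comes from a collar of width $O(\sqrt{s})$ around $\partial D$, and the heat content of disjoint well-separated boundary pieces is essentially additive up to exponentially small corrections coming from heat diffusing between distant pieces. The upper and lower heat-content bounds of Section~\ref{sec::heatContentEstimates}, applied scale by scale, let me relate $E(s)$ to a weighted sum over the tree of rescaled single-piece heat contents, where the weights are exactly the scaling factors (to the power governing the capacitary density from Assumption~\ref{ass::capacitaryDensity}) that also appear in the martingale.

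\smallskip

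The third step is to match this sum to the martingale limit. Using the renewal theorem in the Malthusian parametrisation, the weighted sum over the generations of the tree, after multiplication by $s^{-(1-\gamma/2)}$, converges almost surely and in $L^1$ to a constant multiple of $N_\infty$; the constant $c_7$ packages the single-generator heat-content asymptotics (the $s$-independent geometric factor and the capacitary density constant) together with the mean reproduction measure of the branching process. I would need to check that the stopping-line or generation-cutoff used to truncate the tree at scale $\sqrt{s}$ is compatible with the martingale convergence, so that passing to the limit $s \to 0$ and the limit over generations can be interchanged; uniform integrability of the martingale (which gives the $L^1$ convergence and the nontriviality of $N_\infty$) is what makes this interchange legitimate and simultaneously yields both the almost sure and the $L^1$ statements.

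\smallskip

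The main obstacle I expect is controlling the error terms uniformly over the (random, unboundedly deep) branching tree simultaneously with the small-$s$ limit: the pieces attached at a given node come in a continuum of scales, so there is no single natural integer generation at which to truncate, and the heat diffusing between geometrically close but tree-distant pieces must be shown to be negligible. Handling this requires a careful choice of a stopping line adapted to the scale $\sqrt{s}$, a uniform (in the tree) version of the collar estimate, and an application of a renewal-type theorem for general branching processes that is robust enough to accommodate the $s$-dependent truncation while preserving the almost sure convergence to $N_\infty$.
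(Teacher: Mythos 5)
Your overall architecture---a recursive decomposition of the heat content along the branching tree, followed by a strong law of large numbers for the general branching process, with $N_\infty$ the a.s.\ and $L^1$ limit of the intrinsic martingale---is indeed the paper's strategy. But the step that actually produces the recursion is missing, and the mechanisms you propose in its place would fail. First, the estimates of Section \ref{sec::heatContentEstimates} relate $E(s)$ to $\mu(c\sqrt{s})$ only up to non-matching multiplicative constants, so applying them ``scale by scale'' can at best give $\asymp$ statements (as in Section \ref{sec::scaleHomogeneous}), never convergence to the precise limit $c_7 N_\infty$. Second, the scaled copies of the generator inside a third $U$ of the snowflake are \emph{not} well separated: they are adjacent, sharing boundary points with each other and with the central polygonal region $V$, so additivity ``up to exponentially small corrections coming from heat diffusing between distant pieces'' is simply unavailable here. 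Third, the heat content with boundary value $1$ on all of $\partial D$ satisfies no clean recursion at all, because self-similarity holds for the boundary curve, not for the domain; one must first replace $E$ by a modified heat content before any branching identity can hold. (A side misconception: Assumption \ref{ass::capacitaryDensity} does not govern any exponent in the weights; the weights come from volume scaling $R_i^2$ and diffusive time scaling $R_i^{-2}s$, with the Malthusian parameter $\gamma$ entering only through the normalisation.)

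The paper resolves all three points with one idea absent from your proposal: mixed boundary conditions plus Dirichlet bracketing (``extra cooling''). It works with $F_U(s)$, the heat content of $U$ with boundary value $1$ on the fractal part of $\partial U$ and $0$ on the linear part. Imposing the additional condition $\tilde w = 0$ on $\partial V$ forces the solution to vanish identically on $V$ and exactly decouples the scaled copies, so $\tilde F_U(s) = \sum_i F_{U_i}(s)$ and hence $F_U(s) = \psi(s) + \sum_i F_{U_i}(s)$ with $\psi = F_U - \tilde F_U \geq 0$. The coupling error $\psi$ is then controlled not by separation but by Corollary 1.3 of \cite{vdBG1997}: the set of points within distance $\epsilon$ of both the heated fractal part and of $V$ has volume $O(\epsilon^2)$, whence $\psi(s) \leq c_3 s \wedge c_4$---polynomially, not exponentially, small, which suffices since $s = o(s^{1-\gamma/2})$. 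After the change of variables $Z^\phi(t) = e^{2t}F_U(e^{-2t})$, justified by the scaling identity \eqref{eq::scalingHeatContent}, the error $\psi$ becomes a bounded characteristic, and Theorem \ref{thm::NermanSLLN} applies directly through the indicator trick \eqref{eq::oneSidedToTwoSided}; no stopping line at scale $\sqrt{s}$ and no interchange of limits is needed, so the main obstacle you anticipate never arises in this formulation. Finally, the passage back to the true heat content, $E_D(s) = \sum_{i=1}^3 F_{U_i}(s) + O(s)$, again uses the extra-cooling estimate of \cite{vdBG1997} rather than any separation argument.
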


Together with a similar result for the geometry of the boundary of the snowflake, this result has the intuitive interpretation that \emph{one can hear the Minkowski dimension and content of the boundary}.

% ============
% = Notation =
% ============

\subsection*{Notation}

Throughout the document, the symbol $c_i$ with $i \in \N$ will mean \emph{some positive constant} whose value is typically fixed for the length of a proof or a subsection.

% ==========================
% = Heat content estimates =
% ==========================

\section{Heat content estimates}

\label{sec::heatContentEstimates}

In this section, we start by recalling the definition of inner Minkowski dimension and content and then derive bounds on the heat content in a fashion inspired by \cite{BC1986, vdB1994}. Finally, we look at an example.

% =========================================
% = Inner Minkowski dimension and content =
% =========================================

\subsection{Inner Minkowski dimension and content}

\label{subsec::dimensionAndContentDefinition}

Let $K$ be a bounded subset of $\R^d$. The \emph{$\epsilon$-neighbourhood} of $K$ is defined as
$$
K_\epsilon = \{x \in \R^d : d(x, K) \leq \epsilon\},
$$
where $d(x, A)$ is the Euclidean distance between the point $x$ and the set $A$.

For a bounded domain $D$ of $\R^d$, we call $(\partial D)_\epsilon \cap D$ the \emph{inner $\epsilon$-tubular neighbourhood} and will write $\mu_D(\epsilon)$ for the volume of that neighbourhood, i.e.\
$$
\mu_D(\epsilon) = \vol_d((\partial D)_\epsilon \cap D),
$$
where $\vol_d$ denotes the Lebesgue measure on $\R^d$; again, we will omit the dependence on $D$ when there can be no confusion. We call \emph{inner lower, respectively upper, Minkowski dimension} of $\partial D$ the quantity
$$
\underline{\dim}_M \partial D = d - \limsup_{\epsilon \to 0} \frac{ \log \mu(\epsilon)}{\log \epsilon}, \quad \text{respectively} \quad \overline{\dim}_M \partial D = d - \liminf_{\epsilon \to 0} \frac{\log \mu(\epsilon)}{ \log \epsilon}.
$$
When these two quantities are equal, we say that the inner Minkowski dimension exists and use the notation $\dim_M \partial D$ instead. In all cases of interest here, the inner Minkowski dimension is equal to the usual Minkowski dimension defined, for example, in \cite{Falconer1986a}.

When the Minkowski dimension of $\partial D$ exists, we define the \emph{inner lower, respectively upper, Minkowski content} as
$$
\cM_* = \liminf_{\epsilon \to 0} \epsilon^{\dim_M \partial D - d} \mu(\epsilon), \quad  \text{respectively} \quad \cM^* = \limsup_{\epsilon \to 0} \epsilon^{\dim_M \partial D- d} \mu(\epsilon).
$$
When these two quantities are equal, we say that the inner Minkowski content exists and use the notation $\cM$ instead.

% =================
% = The estimates =
% =================

\subsection{The estimates}

It will be convenient to be able to solve the heat equation under the conditions in \eqref{eq::heatEquationSpecProblems} using the probabilistic representation
\begin{equation}\label{eq::probabilisticSolution}
u(s,x) = \bP_x(T_{D^c} \leq s),
\end{equation}
where $T_{D^c}$ is the hitting time of $D^c$ of Brownian motion and $\bP_x$ is the law of Brownian motion started at $x$. Therefore, we will always make the following assumption.

\begin{assumption}\label{ass::regularityBrownianMotion}
All	the points of $\partial D$ are regular for $D^c$.
\end{assumption}

Recall that this assumption is always satisfied for simply connected planar domains; see Proposition II.1.14 of \cite{Bass1995}. This will cover all the examples discussed here.

We now prove a first upper bound for the heat content following the argument of \cite{vdB1994}.

\begin{theorem}\label{thm::upperBoundHeatContent}
Let $D$ be a bounded domain in $\R^d$ and let $\omega: \R_+ \to \R_+$ be an increasing function with $\omega(0) = 0$. Then, for every $s \geq 0$,
$$
E(s) \leq \mu(\omega(s)) + 2^{(d+2)/2} \vol_d(D)e^{-\omega(s)^2/4s}.
$$	
\end{theorem}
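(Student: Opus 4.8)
The plan is to split the integral defining $E(s)$ according to whether a point of $D$ lies within distance $\omega(s)$ of the boundary, bounding the near-boundary contribution trivially and controlling the far-from-boundary contribution by a Gaussian tail estimate for Brownian motion.

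Write $A = (\partial D)_{\omega(s)} \cap D$ for the inner $\omega(s)$-tubular neighbourhood, and recall that under Assumption \ref{ass::regularityBrownianMotion} the solution of \eqref{eq::heatEquationSpecProblems} admits the representation \eqref{eq::probabilisticSolution}, so that $u(s,x) = \bP_x(T_{D^c} \le s) \in [0,1]$. Splitting $E(s) = \int_A u(s,x)\,dx + \int_{D \setminus A} u(s,x)\,dx$, the first term is at most $\vol_d(A) = \mu(\omega(s))$ since $u \le 1$. It remains to bound the integrand on $D \setminus A$, where every point satisfies $d(x, D^c) \ge d(x, \partial D) > \omega(s)$.

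The key step is the pointwise estimate $u(s,x) \le 2^{(d+2)/2} e^{-d(x,D^c)^2/4s}$ for $x$ away from the boundary. For Brownian motion started at $x$ to reach $D^c$ by time $s$ it must leave the open ball of radius $r := d(x,D^c)$ centred at $x$, so $u(s,x) \le \bP_0(\sup_{t \le s} |B_t| \ge r)$. I would estimate this tail by exploiting the independence of the coordinates: since $|B_t|^2 = \sum_{i=1}^d (B^{(i)}_t)^2 \le \sum_{i=1}^d M_i^2$ with $M_i = \sup_{t \le s}|B^{(i)}_t|$, an exponential Markov inequality gives $\bP_0(\sup_{t\le s}|B_t| \ge r) \le e^{-\lambda r^2} \prod_{i=1}^d \mathbb{E}[e^{\lambda M_i^2}]$; choosing $\lambda = 1/(4s)$ produces the exponent $r^2/(4s)$, while the reflection principle bounds the tail of each $M_i$ by a Gaussian one and thereby controls each factor $\mathbb{E}[e^{M_i^2/4s}]$ by a constant, yielding the prefactor $2^{(d+2)/2}$. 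Since the bound is decreasing in $r$ and $r > \omega(s)$ on $D \setminus A$, I obtain $u(s,x) \le 2^{(d+2)/2} e^{-\omega(s)^2/4s}$ there, and integrating over the set $D \setminus A$, whose volume is at most $\vol_d(D)$, completes the proof.

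The main obstacle is the sharp tail estimate for the running maximum $\sup_{t\le s}|B_t|$ with precisely the constant $2^{(d+2)/2}$ and exponent $1/(4s)$. The delicate point is that the reflection principle handles a single coordinate cleanly but the Euclidean norm couples them, so one must pass through the independent one-dimensional maxima and optimise the exponential moment at $\lambda = 1/(4s)$ rather than the naive $1/(2s)$, trading a slightly worse constant for the clean exponent.
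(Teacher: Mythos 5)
Your decomposition is exactly the paper's: split $E(s)$ over the inner $\omega(s)$-neighbourhood and its complement, bound the near part by $\mu(\omega(s))$, and integrate a pointwise Gaussian bound over the far part. The gap is in your third paragraph, where you assert that the coordinate-wise route (independent maxima $M_i = \sup_{t \le s}|B^{(i)}_t|$ plus exponential Markov at $\lambda = 1/4s$) ``yields the prefactor $2^{(d+2)/2}$''. It cannot. Each factor in your product satisfies
$$
\bE\left[e^{M_i^2/4s}\right] > \bE\left[e^{(B^{(i)}_s)^2/4s}\right] = (1 - \tfrac{1}{2})^{-1/2} = \sqrt{2},
$$
with \emph{strict} inequality since $M_i \geq |B^{(i)}_s|$ and the inequality is strict with positive probability; concretely, the reflection-principle tail $\bP(M_i \geq a) \leq 2\,\bP(|B^{(i)}_s| \geq a)$ gives at best $\bE[e^{M_i^2/4s}] \leq 2\sqrt{2}-1$, and the true value is about $1.78$. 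So your method produces a prefactor $c^d$ with $c > \sqrt{2}$ strictly, whereas the claimed constant is $2^{(d+2)/2} = 2\,(\sqrt{2})^d$. Already for $d = 3$ one has $(2\sqrt{2}-1)^3 = 22\sqrt{2}-25 > 4\sqrt{2} = 2^{5/2}$, and since $c > \sqrt{2}$ no improvement of the per-coordinate estimate can rescue the claim in high dimension. The loss is intrinsic to replacing $\sup_t |B_t|^2$ by $\sum_i M_i^2$: the coordinate suprema are not attained at the same time, and that slack costs a factor strictly larger than $1$ per coordinate. Thus your argument proves the theorem only with a weaker, dimension-dependent constant (which would suffice for the paper's planar snowflakes, but not for the statement as given for all $d$).

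The fix, which is what the paper does, is to apply L\'evy's maximal inequality to the norm process directly: $d$-dimensional Brownian motion is a symmetric process, so
$$
\bP_0\Bigl(\sup_{0 \le u \le s}|B(u)| \geq r\Bigr) \leq 2\,\bP_0\bigl(|B(s)| \geq r\bigr) = 2\,(2\pi s)^{-d/2}\int_{|y| \geq r} e^{-|y|^2/2s}\,dy,
$$
and then the elementary split $e^{-|y|^2/2s} \leq e^{-r^2/4s}\,e^{-|y|^2/4s}$ on $\{|y| \geq r\}$ gives $\bP_0(|B(s)| \geq r) \leq 2^{d/2} e^{-r^2/4s}$; the two factors combine to exactly $2^{(d+2)/2}$. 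Alternatively, if you want to keep your exponential-Markov formulation, apply it to $\sup_{u \le s}|B(u)|^2$ itself and use L\'evy's inequality to control its exponential moment, $\bE[e^{\sup_{u\le s}|B(u)|^2/4s}] \leq 2\,\bE[e^{|B(s)|^2/4s}] - 1 = 2^{(d+2)/2}-1$; either way, the reflection step must be carried out for the full Euclidean norm, not coordinate by coordinate.
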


\begin{proof}
Let $s \geq 0$ and put
	$$
	A_s = \{x \in D: d(x, \partial D) \leq \omega(s)\}.
	$$
By \eqref{eq::probabilisticSolution}, we have
	\begin{equation}\label{eq::firstStepBoundHeatContent}
	\begin{aligned}
	E(s) 	& = \int_{A_s} \bP_x(T_{D^c} \leq s) dx + \int_{D\setminus A_s} \bP_x(T_{D^c} \leq s) dx \\
			& \leq \mu(\omega(s)) + \int_{D\setminus A_s} \bP_x(T_{D^c} \leq s) dx.
	\end{aligned}
	\end{equation}

But now, by Lévy's maximal inequality, e.g.\ Theorem 3.6.5 of \cite{Simon2005},
	\begin{align*}
		\bP_x(T_{D^c} \leq s) & \leq \bP_x(T_{\cB(x, d(x, \partial D))^c} \leq s)\\
		& = \bP_0 \left( \sup_{0 \leq u \leq s} |B(u)| \geq d(x, \partial D) \right)\\
		& \leq 2 \bP_0 (|B(s)| \geq d(x, \partial D))\\
		& = 2 (2 \pi s)^{-d/2} \int_{|y| \geq d(x, \partial D)} e^{-|y|^2/2s} dy,
	\end{align*}
	where $\cB(x, \delta)$ denotes the open ball of radius $\delta$ centred at $x$. Changing variables and standard Gaussian estimates then yield
	\begin{equation}\label{eq::gaussianEstimate}
	\bP_x(T_{D^c} \leq s) \leq 2^{(d+2)/2} e^{-d(x, \partial D)^2/4s}.
	\end{equation}
	Since $d(x, \partial D) > \omega(s)$ for $x \in D \setminus A_s$, using this estimate in \eqref{eq::firstStepBoundHeatContent} completes the proof.
\end{proof}

The other estimates that we will use are those of Theorems 1.2 to 1.4 of \cite{vdB1994} which we recall here for convenience.

\begin{theorem}[van den Berg]\label{thm::vdBUpperBound}
Let $D$ be a bounded domain in $\R^d$. Then, for every $s \geq 0$,
$$
E(s) \leq 2^{d/2} s^{-1} \int_0^\infty \epsilon e^{-\epsilon^2/4s} \mu(\epsilon) d\epsilon.
$$	
\end{theorem}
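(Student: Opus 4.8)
The plan is to start from the probabilistic representation \eqref{eq::probabilisticSolution} together with the pointwise Gaussian bound \eqref{eq::gaussianEstimate} already established in the proof of Theorem \ref{thm::upperBoundHeatContent}, and then to recast the resulting spatial integral as an integral against the tubular volume $\mu$. Writing $\rho(x) = d(x, \partial D)$ and integrating \eqref{eq::gaussianEstimate} over $D$ gives
$$
E(s) = \int_D \bP_x(T_{D^c} \leq s)\, dx \leq 2^{(d+2)/2} \int_D e^{-\rho(x)^2/4s}\, dx,
$$
so the whole task reduces to controlling $\int_D e^{-\rho(x)^2/4s}\, dx$ in terms of $\mu$.

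The key step is a layer-cake decomposition of the Gaussian weight. Using the elementary identity
$$
e^{-\rho^2/4s} = \int_\rho^\infty \frac{\epsilon}{2s}\, e^{-\epsilon^2/4s}\, d\epsilon,
$$
which follows by recognising the integrand as $-\frac{d}{d\epsilon} e^{-\epsilon^2/4s}$, I would substitute and apply Tonelli's theorem (everything is nonnegative and measurable, $\rho$ being $1$-Lipschitz) to swap the order of integration:
$$
\int_D e^{-\rho(x)^2/4s}\, dx = \int_0^\infty \frac{\epsilon}{2s}\, e^{-\epsilon^2/4s}\, \vol_d\bigl(\{x \in D : \rho(x) < \epsilon\}\bigr)\, d\epsilon.
$$
Since $\{x \in D : d(x, \partial D) < \epsilon\} \subseteq (\partial D)_\epsilon \cap D$, the inner volume is bounded by $\mu(\epsilon)$, and collecting the prefactors $2^{(d+2)/2} \cdot \frac{1}{2s}$ produces exactly $2^{d/2} s^{-1} \int_0^\infty \epsilon\, e^{-\epsilon^2/4s}\, \mu(\epsilon)\, d\epsilon$, as claimed.

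I expect the derivation to be essentially routine, since the one genuinely substantive ingredient, the Gaussian tail estimate \eqref{eq::gaussianEstimate}, is already in hand. The only points needing care are the justification of the interchange of integrals, which Tonelli handles cleanly, and the constant bookkeeping, where the factor $\tfrac{1}{2}$ from the layer-cake identity combines with $2^{(d+2)/2}$ to give $2^{d/2}$ because $(d+2)/2 - 1 = d/2$. No regularity beyond boundedness of $D$ enters the estimate, which is why it holds for an arbitrary bounded domain; the \emph{hard part}, if any, is purely conceptual, namely recognising that converting the pointwise distance profile into the cumulative volume function $\mu$ is precisely what makes the bound usable for the later asymptotic analysis.
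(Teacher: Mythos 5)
Your proof is correct: the Gaussian bound \eqref{eq::gaussianEstimate}, the layer-cake identity $e^{-\rho^2/4s}=\int_\rho^\infty \frac{\epsilon}{2s}e^{-\epsilon^2/4s}\,d\epsilon$, the Tonelli interchange, and the constant bookkeeping $2^{(d+2)/2}\cdot\tfrac12 = 2^{d/2}$ all check out, and the inclusion $\{x\in D:\rho(x)<\epsilon\}\subseteq(\partial D)_\epsilon\cap D$ justifies replacing the level-set volume by $\mu(\epsilon)$. Note that the paper itself gives no proof of this statement --- it is recalled from Theorem 1.2 of \cite{vdB1994} --- but your argument is essentially van den Berg's original one, built from exactly the same ingredients the paper deploys in proving Theorem \ref{thm::upperBoundHeatContent}, so there is nothing to flag beyond the minor caveat that the representation $E(s)=\int_D \bP_x(T_{D^c}\leq s)\,dx$ holds for the heat-kernel definition of $E$ irrespective of Assumption \ref{ass::regularityBrownianMotion}, which is why no boundary regularity is needed in the statement.
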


The lower bound for the heat content is proved under a capacitary condition that we state now; see \cite{BC1986, vdB1994} for more background. We write $\Capacity (A)$ for the Newtonian capacity of the set $A$.

\begin{assumption}[Capacitary density]\label{ass::capacitaryDensity}
	For the bounded domain $D$ of $\R^d$ with $d \geq 2$ there exists a positive constant $c$ such that, for all $x \in D$ and $r \in (0, \diam(D))$,
	$$
	\Capacity(B(x, r) \cap \partial D) \geq c \Capacity(B(x, r)).
	$$
\end{assumption}

This assumption is usual in these problems and we say that the \emph{capacitary density of $\partial D$ is bounded below} when it is satisfied. This is always the case when $D$ is a simply connected planar domain; see \cite{vdB1994}.

\begin{theorem}[van den Berg]\label{thm::vdBLowerBound}
	Let $D$ be a bounded domain of $\R^d$ and assume that either $d= 1$ or $d \geq 2$ and the capacitary density of $\partial D$ is bounded below. Then, for every $s \geq 0$,
	$$
	E(s) \geq c_1 \mu(c_2 s^{1/2}).
	$$
\end{theorem}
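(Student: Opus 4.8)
The plan is to bound $E(s)$ from below by discarding points far from $\partial D$ and showing that every point within distance of order $s^{1/2}$ of $\partial D$ hits $D^c$ before time $s$ with probability bounded below by a universal constant. Using the probabilistic representation \eqref{eq::probabilisticSolution}, I would write $E(s)=\int_D\bP_x(T_{D^c}\le s)\,dx$ and restrict the integral to $A=\{x\in D:d(x,\partial D)\le c_2 s^{1/2}\}$, whose volume is exactly $\mu(c_2 s^{1/2})$. Then $E(s)\ge\big(\inf_{x\in A}\bP_x(T_{D^c}\le s)\big)\,\mu(c_2 s^{1/2})$, so the whole problem reduces to finding $c_1,c_2>0$ with $\bP_x(T_{D^c}\le s)\ge c_1$ for all $x\in A$ and all small $s$. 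When $d=1$ this is immediate: if $d(x,\partial D)=\delta\le c_2 s^{1/2}$ then $D^c$ meets a ray from $x$ at distance $\delta$, so the reflection principle gives $\bP_x(T_{D^c}\le s)\ge 2\,\bP_0(B(s)\ge c_2 s^{1/2})$, a positive constant free of $x$ and $s$.

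For $d\ge2$ the capacitary density is what produces this uniform lower bound. Fix $x\in A$, put $r=c_2 s^{1/2}$ (so that $d(x,\partial D)\le r$) and $K=B(x,2r)\cap\partial D$; for $s$ small one has $2r<\diam(D)$ and $K\ne\emptyset$, and Assumption \ref{ass::capacitaryDensity} gives $\Capacity(K)\ge c_3\,\Capacity(B(x,2r))$. I would then invoke the classical estimate bounding a hitting probability below by a relative capacity: there is a dimensional constant $c_4>0$ such that, for every compact $K\subseteq B(x,2r)$,
$$
\bP_x\big(T_K\le T_{B(x,4r)^c}\big)\ge c_4\,\frac{\Capacity(K)}{\Capacity(B(x,2r))}.
$$
Combining this with the capacitary bound yields $\bP_x(T_K\le T_{B(x,4r)^c})\ge c_3 c_4=:c_5>0$, uniformly in $x\in A$ and in small $s$. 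Since $D$ is open we have $K\subseteq\partial D\subseteq D^c$, whence $T_{D^c}\le T_K$, so $c_5$ is in fact a lower bound for the probability of hitting $D^c$ before leaving $B(x,4r)$.

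It remains to upgrade ``hit before leaving $B(x,4r)$'' to ``hit before time $s$''. Writing $\tau=T_{B(x,4r)^c}$, the inclusion $\{T_K\le\tau\le s\}\subseteq\{T_{D^c}\le s\}$ gives $\bP_x(T_{D^c}\le s)\ge\bP_x(T_K\le\tau)-\bP_x(\tau>s)$, and Brownian scaling turns the last term into $\bP_0(\sup_{0\le u\le1}|B(u)|<4c_2)$, which tends to $0$ as $c_2\to0$ while $c_5$ stays fixed. Fixing $c_2$ small enough that this probability is at most $c_5/2$, I obtain $\bP_x(T_{D^c}\le s)\ge c_5/2=:c_1$ for every $x\in A$, and the claimed bound $E(s)\ge c_1\mu(c_2 s^{1/2})$ follows. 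The hard part is the capacity--hitting estimate of the second paragraph; I would prove it by representing $\bP_x(T_K\le\tau)$ as the equilibrium potential of $K$ for the Green function $G_{B(x,4r)}$ of the outer ball, which on $B(x,2r)$ is comparable to $|x-\cdot|^{2-d}$, and integrating against the equilibrium measure of $K$ (of total mass $\Capacity(K)$). The one genuinely delicate point is $d=2$, where the whole-space Green function is unavailable and one must phrase everything in terms of the logarithmic relative capacity; the rest is bookkeeping of dimensional constants.
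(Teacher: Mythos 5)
The paper does not prove this statement at all: it is recalled, explicitly without proof, from Theorems 1.2--1.4 of \cite{vdB1994}, so the only meaningful comparison is with van den Berg's original argument. Your proof is correct and is essentially that argument: restrict $E(s)=\int_D \bP_x(T_{D^c}\le s)\,dx$ to the inner $c_2 s^{1/2}$-neighbourhood, whose volume is exactly $\mu(c_2 s^{1/2})$, and obtain a uniform lower bound on the hitting probability there from the capacitary density hypothesis together with the equilibrium-potential representation of $\bP_x\bigl(T_K\le T_{B(x,4r)^c}\bigr)$ against the Green function of the larger ball; your way of upgrading ``hit $K$ before exiting $B(x,4r)$'' to ``hit $D^c$ before time $s$'' (choosing $c_2$ small so that the no-exit probability is at most half of the capacity lower bound, whose constant does not depend on $c_2$) is sound, as is the $d=1$ reflection argument. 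Two minor points, neither substantive. First, Assumption \ref{ass::capacitaryDensity} as printed (``for all $x\in D$'') is vacuous as literally stated, since $B(x,r)\cap\partial D=\emptyset$ for $x$ deep inside $D$ and $r$ small; it is intended for $x\in\partial D$, and your application then needs the one-line repair of passing to a nearest boundary point $y$ and using $B(y,r)\subseteq B(x,2r)$ together with $\Capacity(B(y,r))\asymp\Capacity(B(x,2r))$. Second, the theorem claims the bound for every $s\ge 0$ whereas your argument gives it for small $s$; monotonicity of $E$ and the bound $\mu\le\vol_d(D)$ extend it to all $s$ after shrinking $c_1$. Your deferral of $d=2$ to the Green/logarithmic relative capacity is the standard and correct resolution, and it is also how the original handles that case.
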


These results illustrate the role of the Minkowski dimension in establishing a lower bound on $E(s)$ for small $s$. Furthermore, the change of variables $\eta = \epsilon^2/4$ yields
\begin{equation}\label{eq::changeVariables}
E(s) \leq 2^{(d+2)/2} s^{-1} \int_0^\infty e^{-\eta /s} \mu(2 \eta^{1/2}) d \eta.
\end{equation}
If $\mu(\epsilon)$ does not oscillate too much for small $\epsilon$, then an Abelian theorem can be used to deduce the behaviour of $E(s)$ for small $s$ as we show now.

\begin{theorem}\label{thm::heatContentAbelianArgument}
Let $D$ be a bounded domain of $\R^d$ and assume that either $d= 1$ or $d \geq 2$ and the capacitary density of $\partial D$ is bounded below. Assume further that there exist $\gamma \in (0, \infty)$ and a slowly varying function $L$ such that
$$
\mu(\epsilon) \asymp \epsilon^{d-\gamma} L(1/\epsilon)
$$
for small $\epsilon$. Then,
$$
E(s) \asymp s^{(d- \gamma)/2} \tilde L(1/s)
$$
for small $s$, where $\tilde L$ is the slowly varying function defined by
$$
\tilde L(x) = L(x^{1/2}/2).
$$
\end{theorem}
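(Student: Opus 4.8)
The plan is to sandwich $E(s)$ between two quantities of order $s^{(d-\gamma)/2}\tilde L(1/s)$, using Theorem~\ref{thm::vdBLowerBound} for the lower bound and the Laplace representation \eqref{eq::changeVariables} together with a standard Abelian theorem for the upper bound. Before starting I would record two facts about slow variation that are used repeatedly. First, $\tilde L(x)=L(x^{1/2}/2)$ is itself slowly varying, since $\tilde L(\lambda x)/\tilde L(x)=L(\lambda^{1/2}x^{1/2}/2)/L(x^{1/2}/2)\to 1$ as $x\to\infty$. Second, for any fixed $a>0$ one has $L(ax)/L(x)\to1$, so the factors $L(1/(c_2 s^{1/2}))$, $L(s^{-1/2})$ and $\tilde L(1/s)$ are all comparable as $s\to0$. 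The hypothesis is read as $c^{-1}\epsilon^{d-\gamma}L(1/\epsilon)\le\mu(\epsilon)\le c\,\epsilon^{d-\gamma}L(1/\epsilon)$ for $\epsilon\le\epsilon_0$.

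For the lower bound I would insert the lower estimate for $\mu$ into Theorem~\ref{thm::vdBLowerBound}: for small $s$,
$$
E(s)\ge c_1\,\mu(c_2 s^{1/2})\ge c_1 c^{-1}\,(c_2 s^{1/2})^{d-\gamma}\,L\bigl(1/(c_2 s^{1/2})\bigr),
$$
and the slow variation facts convert $L(1/(c_2 s^{1/2}))$ into a constant multiple of $\tilde L(1/s)$, giving $E(s)\gtrsim s^{(d-\gamma)/2}\tilde L(1/s)$.

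For the upper bound I would start from \eqref{eq::changeVariables} and split the integral at $\eta_0:=(\epsilon_0/2)^2$. On $(\eta_0,\infty)$ the integrand is controlled by the crude bound $\mu\le\vol_d(D)$, so that part contributes at most $\vol_d(D)\,s\,e^{-\eta_0/s}$, which is exponentially small and hence negligible against $s^{(d-\gamma)/2+1}\tilde L(1/s)$. On $(0,\eta_0)$ I would use $\mu(2\eta^{1/2})\le c\,2^{d-\gamma}\eta^{(d-\gamma)/2}L(\eta^{-1/2}/2)$ and re-extend the integral to $(0,\infty)$ at the cost of another exponentially small term. The resulting integral
$$
\int_0^\infty e^{-\eta/s}\,\eta^{(d-\gamma)/2}\,\widehat L(\eta)\,d\eta,\qquad \widehat L(\eta):=L(\eta^{-1/2}/2),
$$
is the Laplace transform of a density that is regularly varying at $0$ with index $(d-\gamma)/2>-1$, and a standard Abelian (Karamata) theorem gives that it is asymptotic to $\Gamma\bigl((d-\gamma)/2+1\bigr)\,s^{(d-\gamma)/2+1}\,\tilde L(1/s)$ as $s\to0$, since the slowly varying part $\widehat L(1/u)=L(u^{1/2}/2)=\tilde L(u)$. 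Multiplying by the prefactor $2^{(d+2)/2}s^{-1}$ from \eqref{eq::changeVariables} yields $E(s)\lesssim s^{(d-\gamma)/2}\tilde L(1/s)$, matching the lower bound.

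The hard part is the upper bound, specifically the clean application of the Abelian theorem. Since the hypothesis controls $\mu$ only for small $\epsilon$, I must localise the Laplace integral to a neighbourhood of $0$ and verify that the discarded tail is genuinely negligible—exponentially small against a power of $s$ times a slowly varying factor—and I must check that the regular variation index and the slowly varying part transform correctly under the substitution $\eta=\epsilon^2/4$. Because the hypothesis is two-sided ($\asymp$ rather than $\sim$), I only need the Abelian theorem applied to the upper envelope $\eta^{(d-\gamma)/2}L(\eta^{-1/2}/2)$, so the sharp constant $\Gamma((d-\gamma)/2+1)$ is irrelevant and the slowly varying factor is the only real subtlety; the lower bound is supplied entirely by Theorem~\ref{thm::vdBLowerBound}.
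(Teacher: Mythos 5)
Your proposal is correct and follows essentially the same route as the paper: the lower bound comes from Theorem \ref{thm::vdBLowerBound} applied to the lower envelope of $\mu$, and the upper bound from \eqref{eq::changeVariables} combined with Karamata's Abelian theorem (the paper applies it to the Laplace--Stieltjes transform of $\tilde\mu(\eta)=\mu(2\eta^{1/2})$ after an integration by parts, citing Feller, whereas you apply it directly to the Laplace transform of the regularly varying upper envelope). If anything, your version is slightly more careful than the paper's, since you localise to the region where the hypothesis on $\mu$ actually holds and verify that the discarded tails are exponentially small, a point the paper's proof glosses over.
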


\begin{proof}
	Define
	$$
	\tilde \mu(\eta) = \mu(2 \eta^{1/2}) = 2^{d- \gamma} \eta^{(d - \gamma)/2} \tilde L(1/\eta).
	$$
	Using \eqref{eq::changeVariables}, an integration by parts and applying Theorem XIII.5.3 in \cite{Feller1968}, we have
	$$
	E(s) \leq c_3 s^{-1} \int_{0}^\infty e^{-\eta/s} \tilde \mu(\eta) d \eta = c_3 \int_0^\infty e^{-\eta/s} \tilde \mu(d \eta)\sim c_4 s^{(d- \gamma)/2} \tilde  L(1/s),
	$$
	as $s \to 0$, where the notation $f(x) \sim g(x)$ means that $f(x)/g(x) \to 1$. A similar lower bound follows immediately using Theorem \ref{thm::vdBLowerBound}.
\end{proof}

When $\mu(\epsilon)$ oscillates too wildly, for example when the Minkowski dimension does not exist, the Abelian theorem is not applicable. But we can then rely on Theorem \ref{thm::upperBoundHeatContent} to study the short time asymptotics of the heat content and get the following announced result.

\begin{theorem}\label{thm::lowerAndUpperHeatDimension}
Let $D$ be a bounded open domain of $\R^d$ and assume that either $d= 1$ or $d \geq 2$ and the capacitary density of $\partial D$ is bounded below. Then,
$$
\liminf_{s \to 0} \left(\frac d2 - \frac{\log E(s)}{\log s} \right) = \frac 12 \underline{\dim}_M \partial D
$$
and
$$
\limsup_{s \to 0} \left(\frac d2 - \frac{\log E(s)}{\log s} \right) = \frac 12 \overline{\dim}_M \partial D.
$$
\end{theorem}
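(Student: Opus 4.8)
The plan is to work with the single quantity $R(s)=\log E(s)/\log s$ and the boundary profile $g(\epsilon)=\log\mu(\epsilon)/\log\epsilon$. Since $\mu$ is nondecreasing with $\mu(\epsilon)\to 0$ as $\epsilon\to 0$ (because $\bigcap_{\epsilon>0}\big((\partial D)_\epsilon\cap D\big)=\partial D\cap D=\emptyset$ and $\mu$ is a finite measure), one has $g(\epsilon)>0$ for all small $\epsilon$, and by definition $\liminf_{\epsilon\to0}g(\epsilon)=d-\overline{\dim}_M\partial D=:\underline p$ and $\limsup_{\epsilon\to0}g(\epsilon)=d-\underline{\dim}_M\partial D=:\overline p$, with $0\le\underline p\le\overline p\le d$. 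Because $\log s<0$ for small $s$ and $I(s)=\tfrac d2-R(s)$, the two target identities are equivalent to $\liminf_{s\to0}R(s)=\underline p/2$ and $\limsup_{s\to0}R(s)=\overline p/2$. The whole argument thus reduces to sandwiching $R(s)$ between two multiples of $g$ evaluated at scales comparable to $\sqrt s$.

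First I would extract an upper bound on $R$ from the lower bound on the heat content: Theorem~\ref{thm::vdBLowerBound} gives $E(s)\ge c_1\mu(c_2 s^{1/2})$, and taking logarithms and dividing by $\log s<0$ (which reverses the inequality) yields $R(s)\le\tfrac12 g(c_2 s^{1/2})+o(1)$, where the $o(1)$ absorbs the constants $\log c_1,\log c_2$ and uses that $g$ is bounded near $0$.

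Next I would produce a matching lower bound on $R$ from Theorem~\ref{thm::upperBoundHeatContent}, which is where the real work lies. Choosing the test function $\omega(s)=\sqrt{A\,s\log(1/s)}$ with a fixed $A>2d$ turns the Gaussian term into a constant multiple of $s^{A/4}$, so that $E(s)\le\mu(\omega(s))+c_3 s^{A/4}$. Using $\log(a+b)\le\log 2+\max(\log a,\log b)$ and dividing by $\log s<0$ (so that the maximum becomes a minimum) gives $R(s)\ge\tfrac{\log 2}{\log s}+\min\big(\tfrac{\log\mu(\omega(s))}{\log s},\,\tfrac{\log c_3}{\log s}+\tfrac A4\big)$. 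The purpose of taking $A>2d\ge 2\overline p$ is that $\tfrac{\log\mu(\omega(s))}{\log s}=g(\omega(s))\,\tfrac{\log\omega(s)}{\log s}$, whose last factor tends to $\tfrac12$, stays below $\tfrac12(\overline p+\delta)<\tfrac A4$ for small $s$; hence the minimum is attained by the genuine $\mu$-term and the Gaussian contribution is asymptotically invisible, leaving $R(s)\ge\tfrac12 g(\omega(s))+o(1)$.

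Finally I would combine the two inequalities by a sweeping argument. Both $c_2 s^{1/2}$ and $\omega(s)$ are continuous, eventually strictly increasing in $s$, and tend to $0$, so as $s\to 0$ they take every sufficiently small value of $\epsilon$; consequently $\liminf$ and $\limsup$ of $g(c_2 s^{1/2})$ and of $g(\omega(s))$ coincide with $\underline p$ and $\overline p$. The upper sandwich then forces $\liminf R\le\underline p/2$ and $\limsup R\le\overline p/2$, while the lower sandwich forces $\liminf R\ge\underline p/2$ and $\limsup R\ge\overline p/2$; equality throughout, rewritten via $I(s)=\tfrac d2-R(s)$, is exactly the assertion. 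The main obstacle is the third step: ensuring that the additive Gaussian error in Theorem~\ref{thm::upperBoundHeatContent} does not corrupt the logarithmic asymptotics, which is handled by the quantitative choice $A>2d$ together with the universal bound $\overline{\dim}_M\partial D\ge 0$, i.e.\ $\overline p\le d$; the remaining care is purely bookkeeping of inequality directions under division by the negative quantity $\log s$.
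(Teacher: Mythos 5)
Your proposal is correct and takes essentially the same route as the paper: both sandwich $\log E(s)$ between van den Berg's lower bound (Theorem \ref{thm::vdBLowerBound}) at scale $s^{1/2}$ and the Gaussian-tail upper bound (Theorem \ref{thm::upperBoundHeatContent}) at scale $\omega(s)=\sqrt{A s\log(1/s)}$, the logarithmic correction being exactly what renders the error term $O(s^{A/4})$ invisible in the log-asymptotics (the paper takes $A=2d$, extracts explicit subsequences, and invokes Theorem \ref{thm::vdBUpperBound} for the limsup half, whereas you take $A>2d$ and reuse Theorem \ref{thm::upperBoundHeatContent} throughout, transferring $\liminf$ and $\limsup$ through monotone changes of scale; these are organizational, not substantive, differences). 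One harmless slip worth fixing: the bound you actually need, $\overline p\le d$, is equivalent to $\underline{\dim}_M\partial D\ge 0$, not to $\overline{\dim}_M\partial D\ge 0$; it is still a universal fact, following from the interior-ball estimate $\mu(\epsilon)\ge c\,\epsilon^d$, valid for every nonempty bounded open set.
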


\begin{proof}
Put $\alpha = \underline{\dim}_M \partial D$ and $\beta = \overline{\dim}_M \partial D$ and let $\delta \in (0, \infty)$ be small.

By definition of the Minkowski dimension, we have, on the one hand, that there exists a sequence $(\epsilon_n, n \in \N)$ converging to 0 along which
$$
\mu(\epsilon_n) \leq \epsilon_n^{d- (\alpha+ \delta)},
$$
and, on the other hand, that
$$
\mu(\epsilon) \geq \epsilon^{d-(\alpha - \delta)}
$$
for $\epsilon$ small enough.

Setting $\omega(s) = \sqrt{2 d s \log(1/s)}$, which is increasing around 0 and putting $\omega(s_n) = \epsilon_n$ together with Theorems \ref{thm::upperBoundHeatContent} and \ref{thm::vdBLowerBound} shows that
$$
E(s_n) \leq s_n^{(d-(\alpha + \delta))/2} \log(1/s_n)^{(d-(\alpha + \delta))/2} + c_5 s_n^{d/2}.
$$
and
$$
E(s) \geq c_6 s^{(d-(\alpha - \delta))/2}
$$
for $s$ small enough. The result for the liminf follows.

The proof of the limsup is similar, and somewhat simpler, relying on Theorems \ref{thm::vdBUpperBound} and \ref{thm::vdBLowerBound}.
\end{proof}

% ==========================
% = Self-affine boundaries =
% ==========================

\subsection{Self-affine boundaries}

We conclude this section with a brief mention of a domain whose boundary's Hausdorff and Minkowski dimensions disagree and where the results discussed here can be applied. This is an alternative example to that of \cite{BC1986} showing that the Minkowski dimension is the relevant measure of roughness for heat conduction problems; in the example presented here, however, the domain is connected.

Following \cite{PS2013}, we define the boundary of the domain using the self-affine carpets of Bedford \cite{Bedford1984} and McMullen \cite{McMullen1984} whose construction we recall briefly now. Let $m < n$ be two integers. Divide the unit square $[0,1]^2$ into $m n$ rectangles of height $m^{-1}$ and width $n^{-1}$. Keep some rectangles according to a pattern $P$ and discard the others. This produces a compact set $K_1$. For each rectangle of the pattern, repeat the procedure. This produces a compact subset $K_2$ of $K_1$. Continue indefinitely to get a compact set
$$
K(P) = \bigcap_{j= 1}^\infty K_j.
$$

A natural way to represent a pattern is to use an $m \times n$ matrix with entries in $\{0,1\}$ where each 1 indicates a rectangle that we choose to keep. The carpet corresponding to pattern $P$ is then
$$
K(P) = \left\{ \sum_{k = 1}^\infty (a_k n^{-k}, b_k m^{-k}) : (a_k, b_k) \in D \right\},
$$
where $D = \{(i,j) : P(j,i) = 1\}$. Here, the rows and columns of $P$ are numbered starting from 0, and the rows are numbered from bottom to top. As an illustration, the first 3 iterations corresponding to the pattern
$$
A = \begin{pmatrix}
	0 & 1 & 1 & 1\\
	1 & 0 & 0 & 0
\end{pmatrix}
$$
are shown in Figure \ref{fig:selfaffinecarpet}.

\begin{figure}
	\begin{center}
		\includegraphics[height = 3cm]{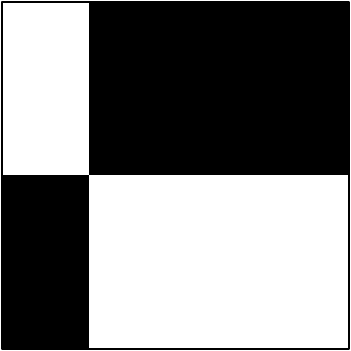}
		\includegraphics[height = 3cm]{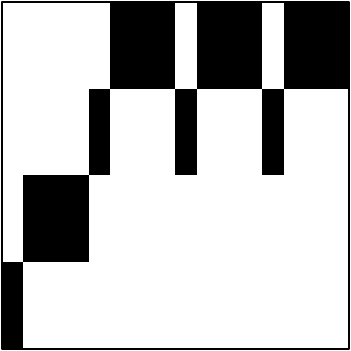}
		\includegraphics[height = 3cm]{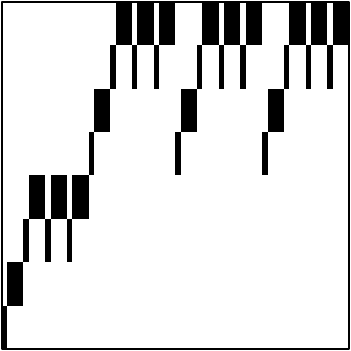}	
	\end{center}
	\caption{First 3 steps of the self-affine carpet for pattern $A$.}
	\label{fig:selfaffinecarpet}
\end{figure}

The Hausdorff dimension of the self-affine carpet built from pattern $P$ is
$$
\log_m \left(\sum_{j=0}^{m-1} r(j)^{\log_n m} \right),
$$
where $r(j)$ is the number of chosen rectangles of the pattern in row $j$, while its Minkowski dimension (which exists) is given by
$$
1 + \log_n \left( \frac 1m \sum_{j=0}^{m-1} r(j) \right);
$$
see \cite{Bedford1984, McMullen1984, P1994}.

Of course, the set $K(P)$ is not a continuous curve in general. But, for appropriate patterns, we can alter the construction of the carpet using the reflected pattern
$$
P^r(i,j) = P(i, n-1-j)
$$
to create a set that is the graph of a continuous function. This can be done for pattern $A$ using a procedure that we describe now.

As in the construction of $K(A)$, start with pattern $A$. Reproduce pattern $A^r$ in the chosen rectangle of column 2 and $A$ in the others. Repeat this procedure for chosen rectangles inside a pattern $A$; and, inside a pattern $A^r$, reproduce pattern $A$ for the chosen rectangle of column 1 and $A^r$ for the others. Continue indefinitely to get a compact set $K_c(A)$; this set is the graph of a continuous function $f_A: [0,1] \to [0,1]$. The first 3 iterations are shown in Figure \ref{fig:contselfaffinefun}. Examining the calculations in \cite{P1994} shows that this alteration of the construction does not change the Hausdorff or the Minkowski dimension; see also \cite{PS2013}.

\begin{figure}
	\begin{center}
		\includegraphics[height = 3cm]{selfAffineLevelOne.pdf}
		\includegraphics[height = 3cm]{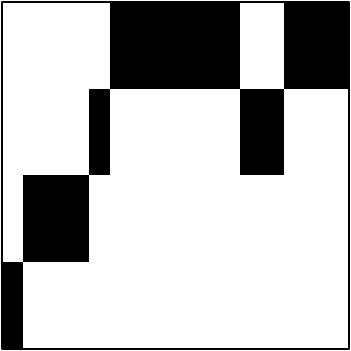}
		\includegraphics[height = 3cm]{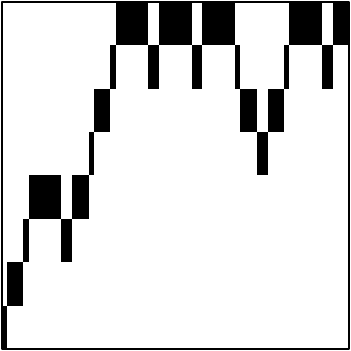}
	\end{center}
	\caption{First 3 steps of the continuous self-affine curve for pattern $A$.}
	\label{fig:contselfaffinefun}
\end{figure}

Now define the continuous function $g: [0,2] \to [1,3]$ by
$$
g(t) = 1 + f(t) \bone_{[0,1]}(t) + (2 - f(2-t))\bone_{(1,2]}(t),
$$
and let $D$ be the simply connected domain inside the Jordan curve defined by
$$
\{(t, g(t)) : t \in [0,2]\} \cup (\{0\}\times[0,1]) \cup ([0,2] \times \{0\})  \cup ( \{2\}\times[0,3]).
$$
Furthermore, put
$$
\nu(\epsilon) = \vol_2(\{ x \in \R^2 : d(x, \{(t,g(t)) : t \in [0,2]\}) \leq \epsilon\}).
$$
Because of the symmetry of $g$, we have
$$
\nu(\epsilon) = 2 \vol_2(\{x \in D :d(x, \{(t,g(t)) : t \in [0,2]\}) \leq \epsilon\}) + \pi \epsilon^2,
$$
and therefore
$$
\nu(\epsilon) = 2 \, \mu (\epsilon) + O(\epsilon^2).
$$
It follows that the inner Minkowski dimension of $\partial D$ is equal to the Minkowski dimension of $K_c(A)$, which is different from its Hausdorff dimension. But because $D$ is simply connected, Theorem \ref{thm::lowerAndUpperHeatDimension} can be used to show that
$$
\lim_{s \to 0} \left(1 - \frac{\log E(s)}{\log s} \right) = \frac 12 + \frac 12 \log_n \left( \frac 1m \sum_{j=0}^{m-1} r(j) \right).
$$

% ===========================
% = Scale homogenous flakes =
% ===========================

\section{Scale homogeneous snowflakes}

\label{sec::scaleHomogeneous}

% ================
% = Construction =
% ================

\subsection{Construction}

We introduce a family of scale homogeneous random snow\-flakes by generalising the Koch curve. In the construction of the usual, triadic, Koch curve, the segment $[0,1]$ is replaced by a curve $K(1)$, say, made of 4 segments of length $1/3$ arranged to produce a spike in the middle. This procedure is then iterated and produces a limiting self-similar curve.

Here we proceed similarly, but using different building blocks with different numbers of spikes. More precisely, for $a \in A$, a bounded subset of $\N$, put
$$
m(a) = 3 a +1 \quad \text{and} \quad \ell(a) = 2a + 1,
$$
and let $K(a)$ be the curve made of $m(a)$ segments of length $\ell(a)^{-1}$ arranged to produce $a$ spikes as depicted in Figure \ref{fig::buildingBlocksSnowflakes}. Now, let $\xi = (\xi_n, n \in \N)$ be a sequence of elements of $A$. We will write $K(\xi)$ for the Koch curve where we used $K(\xi_n)$ as a building block at iteration $n$. For example, the curve formed from the first iterations for $\xi = (1, 3, 2, 1, \dots)$ is shown in Figure \ref{fig::exampleFlake}.

\begin{figure}
\includegraphics[width = 10 cm]{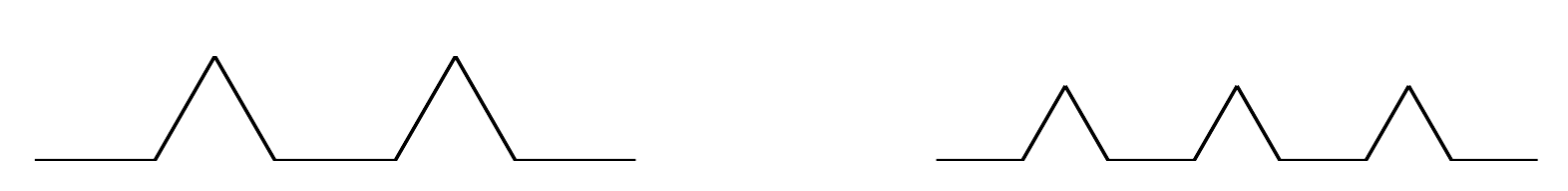}
\caption{The curves $K(2)$ and $K(3)$.}
\label{fig::buildingBlocksSnowflakes}
\end{figure}

\begin{figure}
\includegraphics[width = 10 cm]{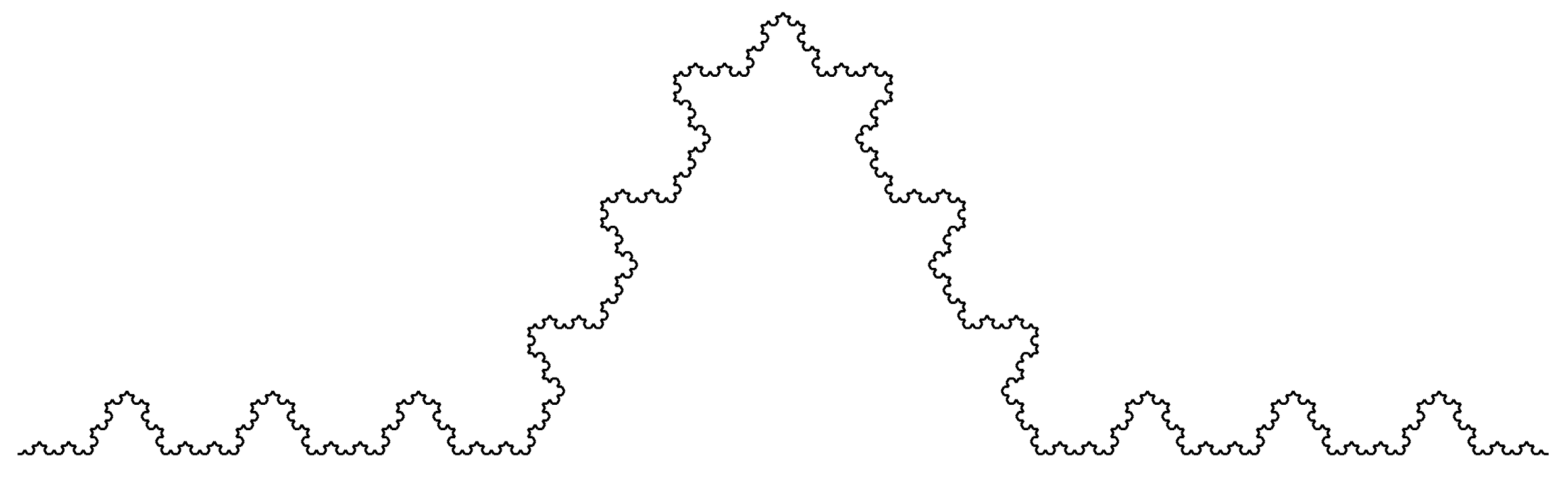}
\caption{First three iterations of $K(1,3,2,1, \dots)$.}
\label{fig::exampleFlake}
\end{figure}

It will be convenient to use the notation
$$
\ell_n = \ell(\xi_n), \quad m_n = m(\xi_n), \quad \epsilon_n^{-1} = L_n = \prod_{i =1}^n \ell_i \quad \text{and}\quad M_n = \prod_{i = 1}^n m_i.
$$
With these definitions, the $n$th iteration in the construction of $K(\xi)$ consists of $M_n$ segments of length $\epsilon_n = L_n^{-1}$.

The domain $D = D(\xi)$ in which we are interested is the one enclosed by the Jordan curve made of three copies of $K(\xi)$ arranged as in the construction of the usual Koch snowflake. In particular, $D$ is simply connected.

% =================================
% = Fractal dimension and content =
% =================================

\subsection{Fractal dimension and content}

Since $A$ is bounded, we have
\begin{equation}\label{eq::coverinNumber}
\epsilon_n \asymp \epsilon_{n+1} \quad \text{and} \quad M_n \asymp N(\epsilon_n, K(\xi)) \asymp N(\epsilon_{n+1}, K(\xi)),
\end{equation}
where $N(\epsilon, K)$ is the covering number of $K$ by balls of radius $\epsilon$. This observation and the mass distribution principle enable us to calculate the dimension of $K(\xi)$.

\begin{theorem}\label{thm::dimensionScaleHomogeneousFlake}
The Hausdorff and Minkowski dimensions of the set $K(\xi)$ are given by
$$
\dim K(\xi) = \underline{\dim}_M K(\xi) = \liminf_{n \to \infty} \frac{\log M_n}{\log L_n}
$$
and
$$
\overline{\dim}_M K(\xi) = \limsup_{n \to \infty} \frac{\log M_n}{\log L_n}.
$$
\end{theorem}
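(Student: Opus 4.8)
The plan is to treat the three quantities separately: the two Minkowski dimensions by a direct covering argument based on \eqref{eq::coverinNumber}, and the Hausdorff dimension by the mass distribution principle, with the general inequality $\dim_H \le \underline{\dim}_M$ supplying one half of the Hausdorff computation for free.

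First I would dispose of the Minkowski dimensions. Recall that
$$
\overline{\dim}_M K(\xi) = \limsup_{\epsilon \to 0} \frac{\log N(\epsilon, K(\xi))}{\log(1/\epsilon)}, \qquad \underline{\dim}_M K(\xi) = \liminf_{\epsilon \to 0} \frac{\log N(\epsilon, K(\xi))}{\log(1/\epsilon)}.
$$
The point is to replace the continuous limit by the limit along the discrete scale sequence $(\epsilon_n)$. Since $A$ is bounded, the factors $\ell_n$ and $m_n$ are bounded, so $\epsilon_{n-1} \asymp \epsilon_n$ and $M_{n-1} \asymp M_n$; combining this with the monotonicity of $\epsilon \mapsto N(\epsilon, K(\xi))$ and the covering estimate \eqref{eq::coverinNumber}, one finds that for $\epsilon \in [\epsilon_n, \epsilon_{n-1}]$ one has $\log N(\epsilon, K(\xi)) = \log M_n + O(1)$ and $\log(1/\epsilon) = \log L_n + O(1)$. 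As $\log M_n, \log L_n \to \infty$, the $O(1)$ terms wash out of the ratio, so the continuous $\limsup$ and $\liminf$ coincide with $\limsup_n (\log M_n / \log L_n)$ and $\liminf_n (\log M_n / \log L_n)$ respectively, which is the claimed identity for both Minkowski dimensions.

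For the Hausdorff dimension, the bound $\dim_H K(\xi) \le \underline{\dim}_M K(\xi) = \liminf_n (\log M_n / \log L_n)$ is automatic. The substance is the matching lower bound, for which I would use the mass distribution principle. Let $\mu$ be the natural measure on $K(\xi)$ obtained by assigning mass $M_n^{-1}$ to each of the $M_n$ level-$n$ segments and passing to the weak limit; this is a Borel probability measure supported on $K(\xi)$. Fix $s < \liminf_n (\log M_n / \log L_n)$, so that $M_n > L_n^{s} = \epsilon_n^{-s}$ for all large $n$. Given a small radius $r$, choose $n$ with $\epsilon_n \le r < \epsilon_{n-1}$, so that $r \asymp \epsilon_n$ by boundedness of $A$. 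Each level-$n$ segment then carries mass $M_n^{-1} < \epsilon_n^{s} \le r^{s}$, so it suffices to bound the number of level-$n$ segments met by the ball $B(x,r)$. Granting that this number is at most a constant $c$, one obtains $\mu(B(x,r)) \le c\, r^{s}$ for all small $r$, whence $\dim_H K(\xi) \ge s$; letting $s \uparrow \liminf_n (\log M_n / \log L_n)$ finishes the argument.

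The main obstacle is precisely the geometric estimate left open above: that a ball of radius comparable to $\epsilon_n$ meets only boundedly many level-$n$ segments, uniformly in $n$. This is a bounded-overlap property (of open-set-condition type) for the construction, and it is here that one must exploit that the building blocks are drawn from the finite family $\{K(a) : a \in A\}$. Because only finitely many polygonal shapes occur, the spike angles and the spatial separation between non-adjacent level-$n$ segments are controlled uniformly across all scales, so a single worst-case geometric constant $c$ can be extracted. Once this uniform separation is established, the counting bound follows and the mass distribution estimate closes the proof.
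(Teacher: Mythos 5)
Your proposal follows essentially the same route as the paper's own proof: the Minkowski dimensions are read off from the covering estimate \eqref{eq::coverinNumber}, and the Hausdorff lower bound comes from the mass distribution principle applied to the natural measure assigning each level-$n$ piece mass $M_n^{-1}$. The bounded-overlap estimate you single out as the main obstacle is also relied upon, implicitly and without comment, in the paper's step $\nu(U) \leq M_n^{-1}$ for sets $U$ of diameter less than $\epsilon_n$ (adjacent pieces meeting at endpoints already force a constant there), so your treatment is, if anything, the more careful of the two.
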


\begin{proof}
The calculation of the lower and upper Minkowski dimension follows directly from \eqref{eq::coverinNumber}.

To simplify the notation for the calculation of the Hausdorff dimension, put $\alpha = \underline{\dim}_M K(\xi)$. It is standard that $\dim K(\xi) \leq \alpha$. To prove the other inequality, let $\delta \in (0, \infty)$. By \eqref{eq::coverinNumber}, there exists $m$ large such that, for every $n \geq m$, we have $M_n^{-1} \leq \epsilon_n^{\alpha - \delta}$. Furthermore, the set $K(\xi)$ is made of $M_m$ (disjoint up to $m-1$ points) copies of the set
$$
\tilde K = L_m^{-1} K(\xi_{m+1}, \xi_{m+2}, \dots).
$$

Now consider the Borel measure $\nu$ assigning mass $M_n^{-1}$ to sets of size $L_n^{-1}$ in the obvious way for $n \geq m$. Let $U$ be a subset of $\tilde K$ and let $n$ be such that $\epsilon_{n+1} \leq \diam U < \epsilon_n$. Then, we have
$$
\nu(U) \leq M_n^{-1} \leq \epsilon_n^{\alpha- \delta} \leq c \epsilon_{n+1}^{\alpha- \delta} \leq c (\diam U)^{\alpha- \delta},
$$
for some positive constant $c$, thanks to \eqref{eq::coverinNumber}.

By the mass distribution principle, it follows that $\tilde K$, and hence $K(\xi)$, has Hausdorff dimension at least $\alpha - \delta$. Since $\delta$ is arbitrary, this completes the proof.
\end{proof}

To continue the geometric description of the scale homogeneous snowflakes, we now look at the volume of the inner tubular neighbourhoods of $D$. The following result immediately implies that the inner Minkowski dimensions agree with the usual definitions and calculations of Theorem \ref{thm::dimensionScaleHomogeneousFlake}.

\begin{lemma}\label{lem::tubNeighbourhoodScaleHomo}
	The volume of the inner tubular neighbourhoods of $D$ satisfies
	$$
	\mu(\epsilon_n) \asymp M_n L_n^{-2}.
	$$
\end{lemma}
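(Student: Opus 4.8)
The plan is to estimate $\mu(\epsilon_n)$ from above and below by comparing $\partial D$ with the polygonal curve $P_n$ obtained after exactly $n$ iterations of the construction, which is made of $3 M_n$ straight segments of length $\epsilon_n = L_n^{-1}$. The geometric fact driving everything is that $\partial D$ lies within Hausdorff distance $c\,\epsilon_n$ of $P_n$, for a constant $c$ independent of $n$ and of $\xi$. Indeed, each level-$n$ segment is subsequently replaced by a rescaled copy of $K(\xi_{n+1}, \xi_{n+2}, \dots)$ sitting on it; since $\xi_{n+1} \in A$ and $A$ is bounded, the spike this produces has height at most a fixed multiple of its base $\epsilon_n$, and the further refinements are nested inside it and contribute a geometrically summable series of heights. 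Thus $\partial D \subseteq (P_n)_{c \epsilon_n}$ and, symmetrically, $P_n \subseteq (\partial D)_{c \epsilon_n}$.

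For the upper bound, the inclusion $\partial D \subseteq (P_n)_{c\epsilon_n}$ gives $(\partial D)_{\epsilon_n} \subseteq (P_n)_{(1+c)\epsilon_n}$. The $(1+c)\epsilon_n$-neighbourhood of a single segment of length $\epsilon_n$ is a stadium of area at most $2(1+c)\epsilon_n^2 + \pi(1+c)^2 \epsilon_n^2$, so, summing over the $3M_n$ segments, $\vol_2((P_n)_{(1+c)\epsilon_n}) \le c'\, M_n \epsilon_n^2$. Since $\mu(\epsilon_n) = \vol_2((\partial D)_{\epsilon_n} \cap D) \le \vol_2((\partial D)_{\epsilon_n})$, this yields the upper bound $\mu(\epsilon_n) \le c'\, M_n L_n^{-2}$.

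For the lower bound I would use a uniform interior corkscrew condition: there is $c_* \in (0,1)$ such that for every $z \in \partial D$ and every $r \in (0, \diam D)$ there is a ball $\cB(y, c_* r) \subseteq D \cap \cB(z, r)$. Granting this, fix for each level-$n$ segment a boundary point $z_i \in \partial D$ within $c\epsilon_n$ of it and apply the condition at $z_i$ with $r = \epsilon_n$ to obtain a ball $\cB_i = \cB(y_i, c_* \epsilon_n)$; every point of $\cB_i$ lies in $D$ and within $\epsilon_n$ of $z_i \in \partial D$, so $\cB_i \subseteq D \cap (\partial D)_{\epsilon_n}$. Retaining a fixed positive fraction of the segments (every few consecutive ones) makes the chosen balls pairwise disjoint while keeping their number at least $c''\, M_n$, whence $\mu(\epsilon_n) \ge \sum_i \vol_2(\cB_i) \ge c'''\, M_n \epsilon_n^2 = c'''\, M_n L_n^{-2}$. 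Combined with the upper bound this is exactly $\mu(\epsilon_n) \asymp M_n L_n^{-2}$.

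The main obstacle is verifying the uniform interior corkscrew condition, i.e.\ that $D$ does not pinch near its boundary at scale $\epsilon_n$, with constants independent of $n$ and of the (possibly wildly oscillating) sequence $\xi$. This is exactly where the boundedness of $A$ is essential: there are only finitely many building-block shapes $K(a)$, $a \in A$, each a fixed simply connected polygon whose spikes point outward and which therefore admits a fixed interior cone; self-similarity then propagates a single constant $c_*$ to all scales. The only point needing a little care is the disjointness of the inscribed balls, which is arranged by thinning the collection of segments by a bounded factor.
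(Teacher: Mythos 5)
Your upper bound is correct and is essentially the paper's own argument (compare the inner neighbourhood with the level-$n$ polygonal curve and sum stadium areas over the $\asymp M_n$ segments). The gap is in the lower bound, and it sits exactly where you flag it: the uniform interior corkscrew condition. The condition itself is true for these snowflakes, but the justification you offer for it cannot work. You argue that each block $K(a)$ is a polygon admitting a fixed interior cone and that ``self-similarity then propagates a single constant $c_*$ to all scales.'' A uniform interior \emph{cone} condition provably fails for $D$: if every point of $\partial D$ admitted a cone of fixed aperture and height contained in $D$, then $\partial D$ would be a finite union of Lipschitz graphs and hence of dimension $1$, whereas here $\overline{\dim}_M \partial D = \limsup_n \log M_n/\log L_n \geq \log 4/\log 3 > 1$ by Theorem \ref{thm::dimensionScaleHomogeneousFlake} (since $m(a) \geq 4$ and $\ell(a) \geq 3$ with $m(a) > \ell(a)$). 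So the cones present at each finite stage of the construction do not survive passage to the limit, and the corkscrew property (balls of radius $c_* r$ in $D \cap \cB(z,r)$, not touching $z$) needs its own proof. Any such proof must exhibit, near every boundary point and at every scale $r$, a definite chunk of $D$ of diameter $\asymp r$ -- and for these domains the natural chunk is a spike triangle of the construction, which is precisely the object the paper uses directly, making the corkscrew detour unnecessary.

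The paper's lower bound is one line once that geometric fact is isolated: the level-$n$ spikes form $M_{n-1}\xi_n \asymp M_n$ \emph{pairwise disjoint} equilateral triangles of side $\epsilon_n$ contained in $D$; the vertices of the level-$n$ segments persist under all further refinements and hence lie on $\partial D$, and every point of an equilateral triangle of side $\epsilon_n$ is within $\epsilon_n/\sqrt{3} < \epsilon_n$ of a vertex, so each whole triangle lies in $(\partial D)_{\epsilon_n} \cap D$; summing areas gives $\mu(\epsilon_n) \geq \tfrac{\sqrt{3}}{4} M_{n-1}\xi_n \epsilon_n^2 \asymp M_n L_n^{-2}$, with disjointness free by construction. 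Note that disjointness is your second gap: retaining ``every few consecutive'' segments only separates segments that are adjacent \emph{along the curve}; to make your balls pairwise disjoint you need the quantitative self-avoidance property that level-$n$ segments far apart along the curve are also at distance $\geq c\,\epsilon_n$ in the plane. That is true here, but it is a nontrivial geometric fact of the same nature as (and most easily deduced from) the disjoint-spike structure, so your route ends up presupposing what the paper's direct argument proves.
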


\begin{proof}
	On the one hand, the volume of the $\epsilon_n$ tubular neighbourhood in one third of the snowflake is bounded below by the volume of the level $n$ spike of a third of the boundary, i.e.\ a copy of $K(\xi)$. This means that
	$$
	\frac 13 \mu(\epsilon_n) \geq \frac{\sqrt{3}}{4} M_{n-1} \xi_n \epsilon_n^2 \asymp M_n \epsilon_n^2,
	$$
	where we used that $A$ is bounded.
	
	On the other hand, this same quantity is bounded above by the volume of the set
	$$
	\{x \in \R^d : d(x, K(\xi_1, \dots, \xi_n)) \leq \epsilon_n\},
	$$
	where $K(\xi_1, \dots, \xi_n)$ is the curve obtained at the $n$th iteration in the construction of the $K(\xi)$. This means that
	$$
\frac 13 \mu(\epsilon_n) \leq 4 M_n \epsilon_n^2.
$$
The result follows.
\end{proof}

Notice that,
$$
\log L_n = n \sum_{a \in A} p_a(n) \ell(a) \quad \text{and} \quad \log M_n = n \sum_{a \in A} p_a(n) m(a), 
$$
where $(p_a(n), a \in A)$ is the empirical distribution for the proportions of the elements of $A$, i.e.\
$$
p_a(n) = \frac 1n \sum_{i = 1}^n \bone_{\xi_i = a}.
$$
Therefore, the key to understanding the log asymptotics of the inner tubular neighbourhoods, and hence the fractal dimension of $\partial D$, is the convergence of the empirical distribution to some limiting probability measure $(p_a, a \in A)$. This observation can easily be used to produce examples such as the following one of snowflakes whose boundary does not have a Minkowski dimension.

\begin{example}\label{ex::noMinkowskiDim}
	Consider the sequence $(\xi_n, n \in \N)$ taking values in $\{1,2\}$ defined by
	$$
	\xi_n =  \begin{cases} 1, & n \in S,\\ 2,& n \in \N \setminus S, \end{cases} \quad \text{where} \quad S = \bigcup_{n = 1}^\infty \{2^{2n} +1, \dots, 2^{2n+1}\}.
	$$
	Then, for $a \in \{1, 2\}$, it is easily checked that
	$$
	\liminf_{n \to \infty} p_a(n) = \frac 13 \quad \text{and} \quad \limsup_{n \to \infty} p_a(n) = \frac 23.
	$$
	Therefore,
	$$
	\liminf_{n \to \infty} \frac{\log M_n}{\log L_n} = \liminf_{n \to \infty} \frac{n^{-1} \sum_{i \leq n} \log m_i}{n^{-1} \sum_{i \leq n} \log \ell_i} = \frac{ \frac 13 \log 4 + \frac 23 \log 7}{\frac 13 \log 3 + \frac 23 \log 5} \simeq 1.2225,
	$$
	while
	$$
	\limsup_{n \to \infty} \frac{\log M_n}{\log L_n} = \limsup_{n \to \infty} \frac{n^{-1} \sum_{i \leq n} \log m_i}{n^{-1} \sum_{i \leq n} \log \ell_i} = \frac{ \frac 23 \log 4 + \frac 13 \log 7}{\frac 23 \log 3 + \frac 13 \log 5} \simeq 1.2395.
	$$
	So the Minkowski dimension of $K(\xi)$ does not exist.
\end{example}

On the other hand, when the sequence $(\xi_n, n \in \N)$ is stationary and ergodic, we have, for every $a \in A$,
$$
p_a(n) \to p_a,
$$
as $n \to \infty$, for some probability measure $(p_a, a \in A)$. In this case, we have the following elementary result.

\begin{theorem}\label{thm::dimensionScaleHomo}
	Suppose that $(\xi_n, n \in \N)$ is stationary and ergodic. Then, the Minkowski dimension of $\partial D$ exists, is equal to its Hausdorff dimension, and is given by
	\begin{equation*}\label{eq::gammaHausdorffMinkowski}
	\frac{\sum_{a \in A} p_a m(a)}{\sum_{a \in A} p_a \ell(a)}.
	\end{equation*}
\end{theorem}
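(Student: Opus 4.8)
The plan is to reduce everything to a single application of Birkhoff's ergodic theorem, since the quantities $\log M_n$ and $\log L_n$ controlling the dimension in Theorem \ref{thm::dimensionScaleHomogeneousFlake} are, up to the factor $n$, Birkhoff averages of bounded functions of the stationary sequence. First I would write
$$
\frac 1n \log M_n = \frac 1n \sum_{i=1}^n \log m(\xi_i) \quad \text{and} \quad \frac 1n \log L_n = \frac 1n \sum_{i=1}^n \log \ell(\xi_i).
$$
Because $A$ is bounded, the functions $\log m(\cdot)$ and $\log \ell(\cdot)$ take finitely many values and are in particular integrable, so stationarity and ergodicity of $(\xi_n, n \in \N)$ let me apply the ergodic theorem to conclude that, almost surely,
$$
\frac 1n \log M_n \to \sum_{a \in A} p_a \log m(a) \quad \text{and} \quad \frac 1n \log L_n \to \sum_{a \in A} p_a \log \ell(a),
$$
where $p_a = \bP(\xi_1 = a)$; equivalently this is the statement that the empirical proportions $p_a(n)$ converge to $p_a$.

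Next I would take the ratio. Since $\ell(a) = 2a + 1 \geq 3$ for every $a \in A$, the limit of $n^{-1} \log L_n$ is a strictly positive constant, so the factors $n^{-1}$ cancel and
$$
\frac{\log M_n}{\log L_n} \to \frac{\sum_{a \in A} p_a \log m(a)}{\sum_{a \in A} p_a \log \ell(a)}
$$
almost surely. In particular the $\liminf$ and $\limsup$ of this sequence coincide, so Theorem \ref{thm::dimensionScaleHomogeneousFlake} gives at once that the Minkowski dimension of $K(\xi)$ exists, equals the displayed ratio, and agrees with the Hausdorff dimension.

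Finally I would transfer the statement to $\partial D$. As $\partial D$ is made of three copies of $K(\xi)$, Lemma \ref{lem::tubNeighbourhoodScaleHomo} gives $\mu(\epsilon_n) \asymp M_n L_n^{-2}$, whence a direct computation yields $d - \log \mu(\epsilon_n)/\log \epsilon_n = \log M_n / \log L_n$; combined with the comparison $\epsilon_n \asymp \epsilon_{n+1}$ from \eqref{eq::coverinNumber}, which lets me pass from the subsequence $(\epsilon_n)$ to the full limit as $\epsilon \to 0$, this identifies the inner lower and upper Minkowski dimensions of $\partial D$ with the common value above. The result is genuinely elementary, as announced; the only points needing care are checking integrability, which is immediate from the boundedness of $A$, and verifying positivity of the denominator, which is precisely what upgrades almost sure convergence of the two Birkhoff averages to convergence of their ratio.
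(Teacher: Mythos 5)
Your proposal is correct and follows essentially the same route as the paper, which presents the result as immediate from the ergodic theorem (the convergence $p_a(n) \to p_a$ of the empirical proportions) combined with Theorem \ref{thm::dimensionScaleHomogeneousFlake} and Lemma \ref{lem::tubNeighbourhoodScaleHomo}. One point worth flagging: the formula you derive, $\left(\sum_{a \in A} p_a \log m(a)\right)/\left(\sum_{a \in A} p_a \log \ell(a)\right)$, is the correct one---the display in the theorem statement (like the display immediately preceding it in the paper) omits the logarithms, a typo confirmed by the computation in Example \ref{ex::noMinkowskiDim}.
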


In a fashion inspired by \cite{BH1997}, let us now focus on the stationary and ergodic case and study how the speed of convergence in the ergodic theorem affects the geometry of the snowflake. Assume that $p_a(n) \to p_a$ as $n \to \infty$ for some probability measure $(p_a, a \in A)$ and that
\begin{equation}\label{eq::definitionFunG}
\sup_{a \in A} |p_a(n) - p_a| \leq n^{-1} g(n),
\end{equation}
where $g$ is a regularly varying function. If $p_a \in (0,1)$ for every $a \in A$, then
$$
\liminf_{n \to \infty} |n p_a(n) - n p_a| > 0.
$$
Therefore, the best rate of convergence that one can get in general is $g(n) = O(1)$. As such, we will always assume that $g$ is non-decreasing.

We now show how the rate of convergence can be used to control the volume of the inner tubular neighbourhoods.

\begin{theorem}\label{thm::boundsVolumeScaleHomo}
Suppose that $(\xi_n, n \in \N)$ is stationary and ergodic and satisfies \eqref{eq::definitionFunG} for some regularly varying function $g$. Then,
$$
c_1 \epsilon^{2- \gamma} e^{-c_2 g(c_3 \log (1/\epsilon)) } \leq \mu(\epsilon)\leq c_4 \epsilon^{2- \gamma} e^{c_2 g(c_3 \log (1/\epsilon))}
$$
for small $\epsilon$, where $\gamma = \dim_M \partial D$.
\end{theorem}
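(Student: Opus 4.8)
The plan is to prove the estimate first along the natural sequence of scales $\epsilon_n = L_n^{-1}$ and then to extend it to all small $\epsilon$ by monotonicity. By Lemma \ref{lem::tubNeighbourhoodScaleHomo} we have $\mu(\epsilon_n) \asymp M_n L_n^{-2}$, so writing $M_n L_n^{-2} = \epsilon_n^{2-\gamma}\,(M_n L_n^{-\gamma})$ reduces the problem to controlling the correction factor $M_n L_n^{-\gamma}$, that is, the quantity $\log M_n - \gamma \log L_n$.

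For this I would use the empirical-distribution identity $\log M_n - \gamma \log L_n = \sum_{i=1}^n(\log m_i - \gamma \log \ell_i) = n\sum_{a \in A}p_a(n)(\log m(a) - \gamma \log \ell(a))$. The definition of $\gamma$ in Theorem \ref{thm::dimensionScaleHomo} is exactly the statement that $\sum_{a \in A}p_a(\log m(a) - \gamma \log \ell(a)) = 0$, so subtracting $n$ times this vanishing combination gives $\log M_n - \gamma \log L_n = n\sum_{a \in A}(p_a(n) - p_a)(\log m(a) - \gamma \log \ell(a))$. Taking absolute values, bounding each factor by $\sup_{a}|p_a(n) - p_a|$, and using that $A$ is finite together with \eqref{eq::definitionFunG} then yields $|\log M_n - \gamma \log L_n| \le c_2\, g(n)$ for a constant $c_2 = \sum_{a\in A}|\log m(a) - \gamma \log \ell(a)|$. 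Exponentiating and combining with Lemma \ref{lem::tubNeighbourhoodScaleHomo} gives $c_1 \epsilon_n^{2-\gamma} e^{-c_2 g(n)} \le \mu(\epsilon_n) \le c_4 \epsilon_n^{2-\gamma}e^{c_2 g(n)}$.

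It remains to replace $n$ by $\log(1/\epsilon)$ and to interpolate between the scales $\epsilon_n$. Because $A$ is a bounded set of positive integers, the terms $\log\ell_i$ lie in a fixed subinterval of $(0,\infty)$ bounded away from $0$, so $\log(1/\epsilon_n) = \sum_{i\le n}\log \ell_i \asymp n$; since $g$ is non-decreasing, choosing $c_3$ large enough lets me bound $g(n)$ (and $g(n+1)$) by $g(c_3 \log(1/\epsilon_n))$. For a general small $\epsilon$ I would pick $n$ with $\epsilon_{n+1} \le \epsilon < \epsilon_n$; monotonicity of $\mu$ sandwiches $\mu(\epsilon)$ between $\mu(\epsilon_{n+1})$ and $\mu(\epsilon_n)$, while \eqref{eq::coverinNumber} gives $\epsilon \asymp \epsilon_n \asymp \epsilon_{n+1}$ and hence $\log(1/\epsilon) \asymp n$. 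Absorbing all these comparabilities into the constants transfers the two bounds from the sequence $(\epsilon_n)$ to every small $\epsilon$, which is the claim.

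The algebraic heart of the argument---the cancellation coming from the defining relation for $\gamma$, which turns $\log M_n - \gamma \log L_n$ into a sum against the centred deviations $p_a(n) - p_a$---is short and clean. I expect the main nuisance to be the bookkeeping in the last step: arranging a single pair of constants $c_2, c_3$ to serve both the upper and lower bounds simultaneously, and checking that $g(n)$, $g(n+1)$ and $g(c_3\log(1/\epsilon))$ are mutually comparable. Here the hypothesis that $g$ is regularly varying (and non-decreasing) is what guarantees this uniformity and keeps the constants from degenerating as $\epsilon \to 0$.
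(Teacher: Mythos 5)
Your proof is correct and follows essentially the same route as the paper: Lemma \ref{lem::tubNeighbourhoodScaleHomo} to reduce to the factor $M_n L_n^{-\gamma}$, then the cancellation $\log(M_n L_n^{-\gamma}) = n\sum_{a\in A}[\log m(a) - \gamma\log\ell(a)][p_a(n)-p_a] = O(g(n))$ via \eqref{eq::definitionFunG}, and finally interpolation between the scales $\epsilon_n$ using $\log(1/\epsilon_n)\asymp n$, $\epsilon_n\asymp\epsilon_{n+1}$ and the monotonicity of $g$. In fact your write-up spells out the interpolation step more carefully than the paper does.
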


\begin{proof}
	By Lemma \ref{lem::tubNeighbourhoodScaleHomo}, we know that
	$$
	\mu(\epsilon_n) \asymp \epsilon_n^{2- \gamma}M_n L_n^{-\gamma}.
	$$
	But, using Theorem \ref{thm::dimensionScaleHomo}, note that
	\begin{align*}
	\log (M_n L_n^{-\gamma}) & = n \sum_{a \in A} [\log m(a) - \gamma \log \ell(a)]p_a(n)\\
		& = n \sum_{a \in A} [\log m(a) - \gamma \log \ell(a)][p_a(n)- p_a] \\
		& = O(g(n)).
	\end{align*}
	
	Putting these observations together shows that
	$$
	c_1 \epsilon_n^{2- \gamma} e^{-c_2 g(n)} \leq \mu(\epsilon_n) \leq c_4 \epsilon_n^{2- \gamma} e^{c_2 g(n)}.
	$$
	The result follows after using that $g$ is non-increasing, and that $\log(1/\epsilon_n) \asymp n$ and $\epsilon_n \asymp \epsilon_{n+1}$, because $A$ is bounded.
\end{proof}

One of the consequences of this result is that we can only have
$$
0 < \cM_*(\partial D) \leq \cM^*(\partial D) < \infty,
$$
if the rate of convergence is the best possible, i.e.\ $g(n) = O(1)$. When this is not the case, but the sharpest function $g$ is known, one can ask about the fluctuations of the volume of the inner tubular neighbourhoods between the bounds given above. A central example where this can be done is when the sequence $(\xi_n, n \in \N)$ is i.i.d., in which case the rate of convergence
$$
g(n) = \sqrt{n \log \log n}
$$
is given by the law of the iterated logarithm; we discuss this case in the following theorem.

\begin{theorem}
Suppose that $(\xi_n, n \in \N)$ is i.i.d. Then,
$$
c_1 \epsilon^{2- \gamma} e^{-c_2 \psi(1/\epsilon)} \leq \mu(\epsilon) \leq c_3 \epsilon^{2- \gamma } e^{c_2 \psi(1/\epsilon)}
$$
for small $\epsilon$, where $\gamma = \dim_M \partial D$ and
$$
\psi(x) = \sqrt{\log x \log \log \log x}.
$$
Furthermore,
$$
\liminf_{\epsilon \to 0} \frac{\mu(\epsilon) e^{c_4 \psi(1/\epsilon)}}{\epsilon^{2- \gamma}} < \infty \quad \text{and} \quad \limsup_{\epsilon \to 0} \frac{\mu(\epsilon) e^{-c_4 \psi(1/\epsilon)}}{\epsilon^{2- \gamma}} > 0.
$$
\end{theorem}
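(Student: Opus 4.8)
\section*{Proof proposal}

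The plan is to reduce everything to a two-sided law of the iterated logarithm (LIL) for a mean-zero random walk. I set $X_i = \log m_i - \gamma \log \ell_i$; since $A$ is finite, $(X_i, i \in \N)$ is i.i.d.\ and bounded, and the identity $\gamma \sum_{a \in A} p_a \log \ell(a) = \sum_{a \in A} p_a \log m(a)$ that defines $\gamma$ (Theorem \ref{thm::dimensionScaleHomo}) says exactly that $\mathbb{E}[X_1] = 0$. Write $\sigma^2 = \mathrm{Var}(X_1)$ and $\lambda = \mathbb{E}[\log \ell_1] = \sum_{a \in A} p_a \log \ell(a)$; both lie in $(0, \infty)$, the case $\sigma^2 > 0$ being the genuinely random one that I assume throughout (if $\sigma^2 = 0$ the fluctuation claims cannot hold). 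With $S_n = \sum_{i=1}^n X_i = \log M_n - \gamma \log L_n$, Lemma \ref{lem::tubNeighbourhoodScaleHomo} rewrites as
$$
\mu(\epsilon_n) \asymp M_n L_n^{-2} = \epsilon_n^{2-\gamma} e^{S_n},
$$
so the whole statement is governed by the walk $S_n$. Throughout I write $u = \log(1/\epsilon)$, so that $\psi(1/\epsilon) = \sqrt{u \, \log \log u}$ because $\log \log \log(1/\epsilon) = \log \log u$.

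For the two-sided sandwich I would invoke Theorem \ref{thm::boundsVolumeScaleHomo} directly. Applying the scalar LIL to each indicator $\bone_{\xi_i = a}$ and maximising over the finite set $A$ gives $\sup_{a \in A} |p_a(n) - p_a| \leq n^{-1} g(n)$ for large $n$, almost surely, with $g(n) = C \sqrt{n \log \log n}$ for a suitable $C$; this $g$ is eventually increasing and regularly varying of index $1/2$, so Theorem \ref{thm::boundsVolumeScaleHomo} applies and already handles the passage from the lattice $\{\epsilon_n\}$ to general small $\epsilon$. It remains only to match $g(c_3 u)$ with $\psi(1/\epsilon)$: since $\log \log(c_3 u) \sim \log \log u$, one has $g(c_3 u) \asymp \sqrt{u \log \log u} = \psi(1/\epsilon)$, which yields the first display.

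For the sharp fluctuation statements I would use the full force of the LIL, namely $\limsup_n S_n / \sqrt{2\sigma^2 n \log \log n} = 1$ and $\liminf_n S_n / \sqrt{2\sigma^2 n \log \log n} = -1$ almost surely. On the lattice, $u_n = \log L_n \sim \lambda n$ by the strong law, so $\psi(1/\epsilon_n) \sim \sqrt{\lambda}\, \sqrt{n \log \log n}$, whence
$$
\limsup_{n} \frac{S_n}{\psi(1/\epsilon_n)} = c_* \quad \text{and} \quad \liminf_{n} \frac{S_n}{\psi(1/\epsilon_n)} = -c_*, \qquad c_* = \sqrt{2\sigma^2/\lambda}.
$$
Fix any $c_4 \in (0, c_*)$. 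Along a subsequence realising the liminf, the relation $\mu(\epsilon_n) \asymp \epsilon_n^{2-\gamma} e^{S_n}$ gives $\mu(\epsilon_n) e^{c_4 \psi(1/\epsilon_n)} \epsilon_n^{\gamma - 2} \asymp \exp\{\psi(1/\epsilon_n)(S_n/\psi(1/\epsilon_n) + c_4)\} \to 0$, since the bracket tends to $c_4 - c_* < 0$ while $\psi(1/\epsilon_n) \to \infty$; as $\epsilon_n \to 0$ this forces the liminf over all $\epsilon$ to be finite. Symmetrically, along a subsequence realising the limsup the exponent $\psi(1/\epsilon_n)(S_n/\psi(1/\epsilon_n) - c_4)$ tends to $+\infty$, so $\mu(\epsilon_n) e^{-c_4 \psi(1/\epsilon_n)} \epsilon_n^{\gamma-2} \to \infty$ and the limsup over all $\epsilon$ is positive.

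The routine parts are the coordinatewise LIL estimate and the lattice-to-continuum interpolation, both standard. The step deserving care --- and the reason a triple logarithm appears in $\psi$ --- is the identity $\log \log \log(1/\epsilon) = \log \log u$, which converts the double-logarithmic LIL timescale $\sqrt{n \log \log n}$ into the correct function of the geometric scale $\epsilon$ after the substitution $u_n \sim \lambda n$. Once the constant $c_* = \sqrt{2\sigma^2/\lambda}$ is pinned down, any $c_4 \in (0, c_*)$ makes both sharp inequalities hold simultaneously, and it is precisely this matching that exhibits $\psi$ as optimal in the claimed sense.
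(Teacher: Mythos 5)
Your proof is correct and follows essentially the same route as the paper: the two-sided sandwich is obtained from Theorem \ref{thm::boundsVolumeScaleHomo} with the LIL rate $g(n)=\sqrt{n\log\log n}$, and the sharp fluctuation statements follow from the two-sided law of the iterated logarithm applied to $S_n=\log(M_nL_n^{-\gamma})$, arguing along the subsequences where the limsup and liminf are attained. Your extra details --- the explicit constant $c_*=\sqrt{2\sigma^2/\lambda}$, the lattice-to-continuum bookkeeping via $\log L_n\sim\lambda n$, and flagging the degenerate case $\sigma^2=0$ (which the paper leaves implicit) --- only sharpen the same argument.
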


\begin{proof}
The first part of the result follows from Theorem \ref{thm::boundsVolumeScaleHomo} using the form of $g$ for the i.i.d.\ case.

For the second part, notice that, by the law of the iterated logarithm,
$$
\log (M_n L_n^{-\gamma}) \leq - \frac 12 g(n), \text{ i.o.} \quad \text{and} \quad \log (M_n L_n^{-\gamma}) \geq \frac 12 g(n), \text{ i.o.}
$$
Proceeding as in the proof of Theorem \ref{thm::boundsVolumeScaleHomo} along the appropriate subsequences yields the result.
\end{proof}

% ============================
% = Heat content asymptotics =
% ============================

\subsection{Heat content asymptotics}

We now study the heat content asymptotics of the scale homogeneous snowflakes. We rely on the results of Section \ref{sec::heatContentEstimates} which are readily applicable since the snowflakes are simply connected.

Let us start by finishing the discussion of Example \ref{ex::noMinkowskiDim}, a case where the heat content has non-trivial log asymptotics reflecting that the Minkowski dimension of the boundary does not exist.

\begin{example} For the snowflake constructed in Example \ref{ex::noMinkowskiDim}, we get, by Theorem \ref{thm::lowerAndUpperHeatDimension}, that
	$$
	\liminf_{s \to 0} \left(\frac d2 - \frac{\log E(s)}{\log s} \right) \simeq 0.6112,
	$$
	while
	$$
	\limsup_{s \to 0} \left(\frac d2 - \frac{\log E(s)}{\log s} \right) \simeq 0.6198.
	$$
	In other words, the log asymptotics of the heat content oscillate between those dictated by the lower and upper Minkowski dimensions.
\end{example}

As we did when we studied the volume of the inner tubular neighbourhood above, we now focus on the case where the sequence $(\xi_n, n \in \N)$ is stationary and ergodic with a rate of convergence given by the regularly varying function $g$ in \eqref{eq::definitionFunG}.

Using Theorems \ref{thm::vdBLowerBound}, \ref{thm::upperBoundHeatContent} or \ref{thm::heatContentAbelianArgument}, and \ref{thm::boundsVolumeScaleHomo}, it is straightforward to get lower and upper bounds for the heat content. Furthermore, when the volume of the inner tubular neighbourhoods oscillates between the lower and upper bounds given by $g$, then a similar reasoning along appropriate subsequences shows that the heat content oscillates in a similar fashion.

Notice, however, that dealing with the upper bound for the heat content is more delicate. This is because the function $\omega$ in Theorem \ref{thm::upperBoundHeatContent} typically involves a logarithmic correction (see the proof of Theorem \ref{thm::lowerAndUpperHeatDimension}). As we now show, this is never a problem when $g$ is not slowly varying; we postpone the discussion of the other situation to the next subsection.

\begin{theorem}\label{thm::heatContentFluctuationRegularlyVarying}
	Suppose that $(\xi_n, n \in \N)$ is stationary and ergodic and that $g$ is regularly varying, but not slowly varying, i.e.\ has the form
	$$
	g(x) = x^\theta L(x),
	$$
	where $\theta \in(0, \infty)$ and $L$ is slowly varying. Then,
	\begin{equation}\label{eq::boundHeatContent}
	c_1 s^{1- \gamma/2} e^{-c_2 g(c_3 \log(1/s))} \leq E(s) \leq c_4 s^{1- \gamma/2} e^{c_2 g(c_3 \log(1/s))}
	\end{equation}
	for small $s$, where $\gamma = \dim_M \partial D$. Furthermore, if
	\begin{equation}\label{eq::fluctuationsVolumeTubNeighbourhood}
	\liminf_{\epsilon \to 0} \frac{\mu(\epsilon) e^{c_5 g(c_6 \log(1/\epsilon))}}{\epsilon^{2- \gamma}} < \infty \quad \text{and} \quad \limsup_{\epsilon \to 0} \frac{\mu(\epsilon) e^{-c_5 g(c_6 \log(1/\epsilon))}}{\epsilon^{2- \gamma}}> 0,
	\end{equation}
	then
	\begin{equation}\label{eq::fluctuationsHeatContent}
	\liminf_{s \to 0} \frac{E(s) e^{c_7 g(c_8 \log(1/s))}}{s^{1- \gamma/2}} < \infty  \quad \text{and} \quad \limsup_{s \to 0}  \frac{E(s) e^{-c_7 g(c_8 \log(1/s))}}{s^{1- \gamma/2}}> 0.
	\end{equation}
\end{theorem}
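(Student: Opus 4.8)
The plan is to read off both halves of \eqref{eq::boundHeatContent} from the volume estimate of Theorem \ref{thm::boundsVolumeScaleHomo} by feeding it into the two-sided heat content bounds of Section \ref{sec::heatContentEstimates}, the only subtlety being a logarithmic correction on the upper side. For the lower bound I would start from Theorem \ref{thm::vdBLowerBound}, which gives $E(s) \geq c_1 \mu(c_2 s^{1/2})$, and insert the lower bound of Theorem \ref{thm::boundsVolumeScaleHomo} at $\epsilon = c_2 s^{1/2}$. Since $(c_2 s^{1/2})^{2-\gamma} \asymp s^{1-\gamma/2}$ and $\log(1/(c_2 s^{1/2})) \asymp \tfrac12 \log(1/s)$, and since $g$ is regularly varying of positive index (so that $g$ is unchanged up to constants when its argument is replaced by a comparable one), this produces the left inequality of \eqref{eq::boundHeatContent} after relabelling constants.

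For the upper bound I would apply Theorem \ref{thm::upperBoundHeatContent} with the choice $\omega(s) = \sqrt{2 d s \log(1/s)}$ already used in the proof of Theorem \ref{thm::lowerAndUpperHeatDimension}. This makes the Gaussian remainder a constant multiple of $s^{d/2}$, which is negligible against $s^{1-\gamma/2} e^{c_2 g(\cdots)}$ because $d + \gamma > 2$ and $e^{c_2 g} \to \infty$. The first term is $\mu(\omega(s))$, and the upper bound of Theorem \ref{thm::boundsVolumeScaleHomo} gives $\mu(\omega(s)) \leq c\, s^{1-\gamma/2} (\log(1/s))^{(2-\gamma)/2} e^{c_2 g(c_3' \log(1/s))}$, the extra factor $(\log(1/s))^{(2-\gamma)/2}$ being exactly the logarithmic correction carried by $\omega$. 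Here is where the hypothesis that $g$ is \emph{not} slowly varying enters: since $\theta > 0$, the quantity $g(c_3' \log(1/s))$ grows like a positive power of $\log(1/s)$ and hence dominates $\log\log(1/s)$, so that $\tfrac{2-\gamma}{2} \log\log(1/s) \leq (c_2' - c_2) g(c_3' \log(1/s))$ for small $s$ and the correction is absorbed into the exponential at the cost of enlarging the constant. This yields the right inequality of \eqref{eq::boundHeatContent}.

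For the fluctuation statement \eqref{eq::fluctuationsHeatContent} I would repeat the two arguments along the subsequences provided by \eqref{eq::fluctuationsVolumeTubNeighbourhood}. The $\limsup$ is the clean half: the hypothesis gives $\epsilon_k \to 0$ with $\mu(\epsilon_k) \geq c\, \epsilon_k^{2-\gamma} e^{c_5 g(c_6 \log(1/\epsilon_k))}$, and applying Theorem \ref{thm::vdBLowerBound} at $s_k = (\epsilon_k/c_2)^2$ transfers this lower bound to $E(s_k)$ with \emph{no} logarithmic correction, since that bound uses $\mu(c_2 s^{1/2})$ directly; converting $\epsilon_k$ to $s_k$ as above gives the $\limsup$ in \eqref{eq::fluctuationsHeatContent}. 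The $\liminf$ half reuses Theorem \ref{thm::upperBoundHeatContent}: taking $\epsilon_k \to 0$ with $\mu(\epsilon_k) \leq C \epsilon_k^{2-\gamma} e^{-c_5 g(c_6 \log(1/\epsilon_k))}$ and choosing $s_k$ by $\omega(s_k) = \epsilon_k$ (so $\log(1/\epsilon_k) \asymp \tfrac12 \log(1/s_k)$) gives $E(s_k) \leq \mu(\epsilon_k) + c s_k^{d/2}$, and the same absorption of the logarithmic correction as above, again using $\theta > 0$, together with the negligibility of $s_k^{d/2}$, produces the $\liminf$ in \eqref{eq::fluctuationsHeatContent}.

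The main obstacle, and the only place the hypothesis on $g$ is genuinely used, is the absorption of the factor $(\log(1/s))^{(2-\gamma)/2}$ coming from $\omega$ into the $g$-exponential on the upper side; this works precisely because $\theta > 0$ forces $g(c_3'\log(1/s))$ to dominate $\log\log(1/s)$, whereas for slowly varying $g$ the correction is of the same order as the main term and must be treated separately, which is why that case is deferred to the next subsection. A secondary point to watch is the negligibility of the Gaussian remainder $s^{d/2}$ relative to $s^{1-\gamma/2} e^{-c_7 g(\cdots)}$ in the $\liminf$ estimate, which holds because the natural rate functions grow sub-linearly in $\log(1/s)$, so that $g(c_8 \log(1/s)) = o(\log(1/s))$.
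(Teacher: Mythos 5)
Your proposal is correct and follows essentially the same route as the paper's proof: the lower bound via Theorem \ref{thm::vdBLowerBound} combined with Theorem \ref{thm::boundsVolumeScaleHomo}, the upper bound via Theorem \ref{thm::upperBoundHeatContent} with $\omega(s)\asymp\sqrt{s\log(1/s)}$ and absorption of the resulting $(\log(1/s))^{1-\gamma/2}$ correction into the exponential using $\theta>0$, and the fluctuation statements by running both arguments along the subsequences given by \eqref{eq::fluctuationsVolumeTubNeighbourhood}. Your explicit justifications of the absorption step and of the negligibility of the Gaussian remainder (using $g(x)=o(x)$) are points the paper leaves implicit, but they do not change the argument.
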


\begin{proof}
	The lower bound in \eqref{eq::boundHeatContent} follows from Theorems \ref{thm::vdBLowerBound} and \ref{thm::boundsVolumeScaleHomo}. The limsup part of \eqref{eq::fluctuationsVolumeTubNeighbourhood} implies that
	$$
	\mu(\epsilon) \geq c_9 \epsilon^{2- \gamma} e^{c_{5} g(c_{6} \log (1/\epsilon))}, \text{ i.o.}
	$$
	Together with Theorem \ref{thm::vdBLowerBound}, this shows that
	$$
	E(s) \geq c_{10} s^{1- \gamma/2} e^{c_7 g(c_8 \log(1/s))}, \text{ i.o.}
	$$
	from which the limsup part of \eqref{eq::fluctuationsHeatContent} follows.
	
	To get the upper bound in \eqref{eq::boundHeatContent}, we rely on Theorem \ref{thm::upperBoundHeatContent} with the choice
	\begin{equation}\label{eq::defnOmega}
	\omega(s) = \sqrt{4 s \log (1/s)}
	\end{equation}
	and Theorem \ref{thm::boundsVolumeScaleHomo} to get that
	\begin{equation}\label{eq::technicalEqOne}
	\begin{aligned}
	E(s) & \leq \mu(\omega(s)) + 2^{(d+2)/2} \vol_d (D) e^{-\omega(s)^2/4s}\\
		& \leq c_{11} s^{1- \gamma/2} \left( \log \frac 1s \right)^{1- \gamma/2} e^{c_{12} g \left(c_{13} \log \frac{1}{s \log( 1/s)} \right)} + 2^{(d+2)/2} \vol_d (D) s\\
		& \leq c_{11} s^{1- \gamma/2} \left( \log \frac 1s \right)^{1- \gamma/2} e^{c_{12} g (c_3 \log (1/s))},
	\end{aligned}
	\end{equation}
	for small $s$. Now using that that $g(x) = x^\theta L(x)$ (this is where we use the assumption that $g$ is not slowly varying), we get that
	\begin{equation}\label{eq::technicalEqTwo}
	E(s) \leq c_4 s^{1- \gamma/2} e^{c_2 g (c_3 \log (1/s))},
	\end{equation}
	for small $s$, as required.
	
	Finally, the liminf part of \eqref{eq::fluctuationsVolumeTubNeighbourhood} implies that
	$$
	\mu(\epsilon) \leq c_{13} \epsilon^{2- \gamma} e^{-c_{14} g(c_{15} \log (1/\epsilon))}, \text{ i.o.}
	$$
	Reasoning as above then shows that
	$$
	E(s) \leq c_{19} s^{1- \gamma/2} e^{-c_7 g(c_8 \log(1/s))}, \text{ i.o.}
	$$
	This establishes the liminf part of \eqref{eq::fluctuationsHeatContent} and completes the proof.
\end{proof}

A particular instance of this is when $(\xi_n, n \in \N)$ is i.i.d.\ and $g(x) = \sqrt{x \log \log x}$; we state it in the following corollary.

\begin{corollary}
	Suppose that $(\xi_n, n \in \N)$ is i.i.d. Then,
	$$
	c_1 s^{1- \gamma/2} e^{-c_2 \psi(1/s)} \leq E(s) \leq c_3 s^{1- \gamma/2} e^{ -c_2 \psi(1/s)}
	$$
	for small $s$, where $\gamma = \dim_M \partial D$ and
	$$
	\psi(x) = \sqrt{ \log x \log \log \log x}.
	$$
	Furthermore,
	$$
	\liminf_{s \to 0} \frac{E(s) e^{c_4 \psi(1/s)}}{s^{1- \gamma/2}} < \infty  \quad \text{and} \quad \limsup_{s \to 0}  \frac{E(s) e^{-c_4 \psi(1/s)}}{s^{1- \gamma/2}}> 0.
	$$
\end{corollary}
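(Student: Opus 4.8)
The plan is to obtain this corollary as a direct specialisation of Theorem \ref{thm::heatContentFluctuationRegularlyVarying}, so the work reduces to (i) checking that the rate function produced by the i.i.d.\ assumption satisfies the hypotheses of that theorem, and (ii) rewriting the bounds, which are phrased in terms of $g$ evaluated at $\log(1/s)$, in terms of $\psi$ evaluated at $1/s$.

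First I would identify the rate function $g$. Since $A$ is finite and $(\xi_n, n \in \N)$ is i.i.d., the left-hand side of \eqref{eq::definitionFunG} amounts to controlling $\sup_{a \in A} |\sum_{i \le n}(\bone_{\xi_i = a} - p_a)|$, and the law of the iterated logarithm gives that this is of order $\sqrt{n \log\log n}$ almost surely. Thus $g(n) = \sqrt{n \log\log n}$, which is regularly varying of index $1/2$ and in particular not slowly varying, so Theorem \ref{thm::heatContentFluctuationRegularlyVarying} applies with $\theta = 1/2$ and $L(x) = \sqrt{\log\log x}$.

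The second, and essentially only computational, step is the asymptotic identification
$$
g(c_3 \log(1/s)) = \sqrt{c_3 \log(1/s)\,\log\log(c_3 \log(1/s))} \sim \sqrt{c_3}\,\sqrt{\log(1/s)\,\log\log\log(1/s)} = \sqrt{c_3}\,\psi(1/s),
$$
as $s \to 0$, where I have used that $\log\log(c_3 \log(1/s)) \sim \log\log\log(1/s)$. Consequently $g(c_3 \log(1/s)) \asymp \psi(1/s)$, and the analogous relation holds for the constants appearing in the fluctuation bounds; feeding this into \eqref{eq::boundHeatContent} and \eqref{eq::fluctuationsHeatContent} converts every occurrence of $g$ into a multiple of $\psi$, the multiplicative factors being absorbed into the constants in the exponent. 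The fluctuation hypothesis \eqref{eq::fluctuationsVolumeTubNeighbourhood} needed to invoke the second half of Theorem \ref{thm::heatContentFluctuationRegularlyVarying} is exactly the fluctuation statement for $\mu$ established in the preceding theorem via the two \emph{i.o.}\ assertions of the law of the iterated logarithm, so no further probabilistic input is required.

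I do not expect a genuine obstacle here: the entire probabilistic content---both the two-sided $\sqrt{n\log\log n}$ envelope and the matching \emph{i.o.}\ lower and upper fluctuations---has already been extracted from the law of the iterated logarithm in the preparatory results, so the corollary is a matter of invoking Theorem \ref{thm::heatContentFluctuationRegularlyVarying} and performing the constant bookkeeping above. The only point requiring a little care is ensuring that the various $c_i$ inside and outside the square root in $g(c_3\log(1/s))$ are correctly absorbed, so that a single constant $c_2$ (respectively $c_4$) can be displayed on both the lower and upper bounds; this follows by taking maxima and restricting to small $s$.
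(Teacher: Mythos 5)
Your proposal is correct and follows exactly the route the paper intends: the corollary is stated as a direct specialisation of Theorem \ref{thm::heatContentFluctuationRegularlyVarying} with $g(x) = \sqrt{x\log\log x}$ from the law of the iterated logarithm, with the fluctuation hypothesis \eqref{eq::fluctuationsVolumeTubNeighbourhood} supplied by the preceding theorem on $\mu(\epsilon)$, and your bookkeeping $g(c_3\log(1/s)) \sim \sqrt{c_3}\,\psi(1/s)$ is the only computation needed. Note in passing that your derivation gives the exponent $+c_2\psi(1/s)$ in the upper bound, which shows the minus sign printed in the corollary's upper bound is a typo in the statement.
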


% ==============================================================
% = Slowly varying rates of convergence in the ergodic theorem =
% ==============================================================

\subsection{Slowly varying rates of convergence in the ergodic theorem}

The key element in the proof of Theorem \ref{thm::heatContentFluctuationRegularlyVarying} is that the logarithmic correction introduced in \eqref{eq::defnOmega} is \emph{not} felt because $g$ is \emph{not} slowly varying. This is what enables us to go from \eqref{eq::technicalEqOne} to \eqref{eq::technicalEqTwo}, which is no longer possible if $g(x) = \log \log x$ or $g$ is constant, for example.

These difficulties as $g$ becomes `closer to a constant' are expected. For example, for the triadic Koch snowflake studied intensively in \cite{FLV1994, LP2006}, it is known that the volume of the inner tubular neighbourhood behaves like
$$
\mu(\epsilon) = p(\log \epsilon) \epsilon^{2- \gamma} + o(\epsilon^{2- \gamma}),
$$
as $\epsilon \to 0$, for some non-constant $\log 3$ periodic function $p$; this corresponds to $g$ constant. By a renewal argument, this is known to imply that
$$
E(s) = q(\log s) s^{1- \gamma/2} +o (s^{1- \gamma/2}),
$$
as $s \to 0$, for some $\log 9$ periodic function $q$. But, to the best of my knowledge, it is not known whether or not $q$ is periodic. In other words, it is not known whether $E(s)$ fluctuates between the upper and lower bounds used in Theorem \ref{thm::heatContentFluctuationRegularlyVarying} for this constant function $g$.

Similar problems have also been studied by Lapidus and coauthors for the eigenvalue counting function
$$
N(\lambda) = \#\{ \text{eigenvalues of }-\Delta/2 \leq \lambda\};
$$
see \cite{LvF2000, LP1993, LP1996} and references therein. For a family of open sets called \emph{fractal strings}, the eigenvalue counting function satisfies
$$
c_1\lambda^{\gamma/2} \leq N(\lambda) - \sqrt{2/\pi} \lambda^{1/2} \leq c_2 \lambda^{\gamma/2},
$$
where $\gamma$ is the Minkowski dimension of the boundary of the fractal string, provided
$$
0 < \cM_* (\partial U) \leq \cM^*(\partial U) < \infty.
$$
This again corresponds to a setup where $g$ is constant; see the remark after Theorem \ref{thm::boundsVolumeScaleHomo}. As it turns out, the question of whether $N(\lambda) - \sqrt{2/\pi} \lambda^{d/2}$ fluctuates between its lower and upper bounds is equivalent to the Riemann hypothesis; see \cite{LvF2000} for further information.

Our aim in this subsection it to discuss one possible refinement of Theorem \ref{thm::upperBoundHeatContent} which enables us to extend Theorem \ref{thm::heatContentFluctuationRegularlyVarying} to some some cases where $g$ is slowly varying.
\begin{lemma}
	Let $D$ be a bounded domain of $\R^d$. Suppose that
	$$
	\mu(\epsilon) \leq c_1 \epsilon^{d- \gamma} e^{\pm c_2 g(c_3 \log(1/\epsilon))}
	$$
	for small $\epsilon$, for some slowly varying function $g$. Suppose further that, for some $k \in \N$,
	$$
	\log^k(x) = O(g(x)),
	$$
	as $x \to \infty$. Then,
	$$
	E(s) \leq c_4 s^{(d- \gamma)/2} e^{\pm c_5 g(c_6 \log(1/s))}
	$$
	for small $s$.
\end{lemma}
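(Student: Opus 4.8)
The plan is to derive the claimed $E$-bound from the hypothesised $\mu$-bound by feeding the latter into the upper estimate of Theorem \ref{thm::upperBoundHeatContent}; the single logarithmic correction that this estimate generates will then be absorbed into $e^{\pm c_5 g(c_6\log(1/s))}$ precisely by means of the assumption $\log^k(x)=O(g(x))$. Throughout I would write $y=\log(1/s)$ and use repeatedly that, $g$ being slowly varying, $g(ay)\sim g(by)$ for any fixed $a,b\in(0,\infty)$, and that $g(y)=O(y^\varepsilon)$ for every $\varepsilon>0$.

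First I would apply Theorem \ref{thm::upperBoundHeatContent} with $\omega(s)=\sqrt{4s\log(1/s)}$, so that $\omega(s)^2/4s=\log(1/s)$ and the tail term equals $2^{(d+2)/2}\vol_d(D)\,s$. Since $d-\gamma<2$ for a domain boundary, $s^{1-(d-\gamma)/2}=s^{(2-(d-\gamma))/2}\to0$ polynomially while $e^{\mp c_5 g(c_6y)}=s^{-o(1)}$ is subpolynomial, so this tail term is negligible against the target $c_4 s^{(d-\gamma)/2}e^{\pm c_5 g(c_6\log(1/s))}$ in both sign cases and may be discarded at the cost of enlarging $c_4$. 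It then remains to control the main term $\mu(\omega(s))$: substituting $\epsilon=\omega(s)$ into the hypothesis and using $\omega(s)^{d-\gamma}\asymp s^{(d-\gamma)/2}(\log(1/s))^{(d-\gamma)/2}$ together with $\log(1/\omega(s))=\tfrac12 y-\tfrac12\log(4\log(1/s))\sim\tfrac12 y$ and slow variation, I would obtain
\[
\mu(\omega(s))\leq c\, s^{(d-\gamma)/2}\,\bigl(\log(1/s)\bigr)^{(d-\gamma)/2}\,e^{\pm c_2 g(\tfrac{c_3}{2} y)}.
\]
The only discrepancy with the claimed bound is the factor $(\log(1/s))^{(d-\gamma)/2}=e^{\frac{d-\gamma}{2}\log y}$. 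This is where the hypothesis enters: since $\log^k(x)=O(g(x))$ we have $\log y\leq(\log y)^k=O(g(y))$, and for $k\geq2$ even $\log y=O\!\bigl(g(y)/(\log y)^{k-1}\bigr)=o(g(y))$; hence $\tfrac{d-\gamma}{2}\log y$ is dominated by $g(y)$, which by slow variation is comparable to $g(c_6 y)$. Absorbing it by taking $c_5$ slightly larger than $c_2$ in the $+$ case, and slightly smaller than $c_2$ in the $-$ case, yields the result.

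The hard part will be the $-$ case. There the logarithmic correction is a \emph{growing} factor pitted against the \emph{decaying} exponential $e^{-c_2 g}$, so merely knowing $\log y=O(g(y))$ with an unfavourable constant (the situation $k=1$ allows) is insufficient unless $c_2$ is large; this is the precise reason the hypothesis is stated for a general power $k$, the stronger consequence $\log y=o(g(y))$ available once $k\geq2$ making the absorption succeed for every $c_2>0$. A cleaner route that sidesteps the issue entirely is to replace Theorem \ref{thm::upperBoundHeatContent} by the sharper integral bound of Theorem \ref{thm::vdBUpperBound} truncated at $\epsilon=\omega(s)$: after the change of variables $\epsilon=\sqrt{4s}\,t$ the weight $t^{1+d-\gamma}e^{-t^2}$ concentrates the mass at $t\asymp1$, i.e.\ at scale $\epsilon\asymp\sqrt s$ rather than at $\omega(s)$, so that \emph{no} logarithmic correction is produced and one reads off $E(s)\asymp s^{(d-\gamma)/2}e^{\pm c_2 g(\frac{c_3}{2}\log(1/s))}$ directly; here the small-$\epsilon$ contribution is controlled by $g(y)=O(y^\varepsilon)$, and the truncation keeps the argument of $g$ comparable to $\tfrac12\log(1/s)$ throughout.
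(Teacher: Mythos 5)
The central problem is a misreading of the hypothesis. In this lemma $\log^k$ denotes the $k$-fold \emph{iterated} logarithm --- the paper's proof begins by setting $\log^i = \log \circ \cdots \circ \log$ --- not the power $(\log x)^k$. The distinction is not cosmetic: the lemma exists precisely to treat functions such as $g(x) = \log\log x$ (see the discussion opening this subsection), which satisfies $\log^k(x) = O(g(x))$ with $k = 2$, but satisfies $(\log x)^k = O(g(x))$ for no $k$. Under the correct reading your single-scale argument fails in both sign cases: with $y = \log(1/s)$ and $g = \log\log$, the choice $\omega(s) = \sqrt{4s\log(1/s)}$ produces the factor $y^{(d-\gamma)/2}$, while $e^{c\, g(c_6 y)} = (\log (c_6 y))^{c}$ is only a power of $\log y$; no power of $\log y$ can absorb a positive power of $y$, so $\mu(\omega(s)) \leq c\, s^{(d-\gamma)/2} y^{(d-\gamma)/2} e^{\pm c_2 g(\cdot)}$ is not of the required form for any $c_5$. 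Your step ``$\log y \leq (\log y)^k = O(g(y))$'' is exactly where the wrong interpretation enters. (Note that even on your own reading the proof is incomplete, as you concede: the case $k=1$, which ``for some $k \in \N$'' allows, needs $c_2$ large.) The paper's actual proof is a multi-scale refinement of Theorem \ref{thm::upperBoundHeatContent} designed so that no factor of $\log(1/s)$ ever survives: with $\omega_i(s) = \sqrt{2ds\log^i(1/s)}$, $i = 1, \dots, k+1$, it splits $D$ into the bulk plus $k+1$ nested shells, and on shell $i$ the Gaussian estimate \eqref{eq::gaussianEstimate} contributes $e^{-\omega_{i+1}(s)^2/4s} = (\log^i(1/s))^{-d/2}$, which annihilates the correction $(\log^i(1/s))^{(d-\gamma)/2}$ coming from $\mu(\omega_i(s))$, leaving $(\log^i(1/s))^{-\gamma/2} \leq 1$. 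Only the innermost shell, estimated by $\mu(\omega_{k+1}(s))$ alone, leaves a correction $(\log^{k+1}(1/s))^{(d-\gamma)/2}$, whose logarithm is comparable to $\log^{k+1}(y) = o(\log^k(y)) = o(g(y))$; this is the one place the hypothesis is used, and the reason it may be taken with an arbitrary iterate $k$.

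Your fallback route via Theorem \ref{thm::vdBUpperBound} is, by contrast, genuinely workable and genuinely different from the paper's: since the weight $\epsilon e^{-\epsilon^2/4s}$ concentrates at $\epsilon \asymp \sqrt{s}$, no logarithmic correction is created, and in fact the $\log^k$ hypothesis is never needed. But your sketch is wrong where it asserts that the truncation keeps the argument of $g$ comparable to $\tfrac12 \log(1/s)$ ``throughout'': truncation at $\omega(s)$ controls only the large-$\epsilon$ end, and as $\epsilon \to 0$ the argument $c_3 \log(1/\epsilon)$ blows up. A further split at $\epsilon = s$, say, is needed. On $[s, \omega(s)]$ the argument of $g$ is $\lambda \cdot \tfrac{c_3}{2} y$ with $\lambda$ ranging in a compact subset of $(0,\infty)$, so the uniform convergence theorem for slowly varying functions gives $g(\cdot) = (1+o(1))\, g(c_3 y /2)$ uniformly there; on $(0,s)$ your device $g(x) = O(x^{\varepsilon})$ (or simply the monotonicity of $\mu$) shows the contribution is $O(s^{1 + d - \gamma - o(1)})$, negligible against $s^{(d-\gamma)/2} e^{-c_5 g(c_6 y)}$ because $g(y) = o(y)$; the tail beyond $\omega(s)$ contributes $O(s)$, also negligible since the hypothesis forces $\gamma \geq d-1$. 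Written out this way, you obtain a correct proof by a different method. One trade-off worth recording: the paper's argument evaluates $\mu$ only at finitely many scales, whereas the integral route needs the bound on $\mu$ on a whole range of scales simultaneously; the former structure is what feeds the infinitely-often fluctuation statements around Theorem \ref{thm::heatContentFluctuationRegularlyVarying}, where the improved bound on $\mu$ is only available along a subsequence.
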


This somewhat technical result acts as a substitute for the steps in \eqref{eq::technicalEqOne} and \eqref{eq::technicalEqTwo} in the proof of Theorem \ref{thm::heatContentFluctuationRegularlyVarying}. Therefore, it readily extends Theorem \ref{thm::heatContentFluctuationRegularlyVarying} to situations where $g$ is slowly varying but satisfies the conditions of the lemma.

\begin{proof}
	For $i \in \{1, \dots, k + 1\}$, define
	$$
	\omega_i(s) = \sqrt{2 d s \log^i (1/s)},
	$$
	where $\log^i = \log \circ \cdots \circ \log$, with $i-1$ compositions, and put
	\begin{align*}
	B^{k+1}_s &= \{x \in D : d(x, \partial D) \leq \omega_{k+1}(s) \},\\
	B_s^i &= \{ x \in D: d(x, \partial D) \leq \omega_i(s)\} \setminus B_s^{i+1}, \quad i \in\{1, \dots, k-1\},\\
	B^0_s &= D \setminus \bigcup_{i = 1}^{k+1} B_s^i.
	\end{align*}
	
	By \eqref{eq::probabilisticSolution} and \eqref{eq::gaussianEstimate}, we see that
	\begin{align*}
	E(s)&\leq \sum_{i = 0}^{k+1} \int_{B_s^i} \bP_x(T_{D^c} \leq s)	dx\\
		& \leq 2^{(d+2)/2} \vol_d(D) s^{d/2} + 2^{(d+2)/2} \sum_{i=1}^{k} \mu(\omega_{i}(s)) e^{- \omega_{i+1}(s)^2/4s} + \mu(\omega_{k+1}(s)).
	\end{align*}
	By our assumption on $g$, we have
	\begin{align*}
	\mu(\omega_{k+1}(s)) & \leq c_1 s^{(d- \gamma)/2} \left( \log^{k+1} \frac 1s\right)^{(d- \gamma)/2} e^{\pm c_2 g(c_6 \log(1/s))}\\
		& \leq c_1 s^{(d- \gamma)/2} e^{\pm c_7 g(c_6 \log(1/s))}
	\end{align*}
	for $s$ small. Furthermore, for $i \in \{1, \dots, k\}$,
	\begin{align*}
		\mu(\omega_i(s)) e^{-\omega_{i+1}(s)^2/4s} & \leq c_1 s^{(d- \gamma)/2} \left(\log^i \frac 1s \right)^{(d- \gamma)/2} e^{\pm c_2 g(c_6 \log (1/s))} e^{- \frac d2 \log^{i+1}(1/s)}\\
		& = c_1 s^{(d- \gamma)/2} \left( \log^i \frac 1s \right)^{- \gamma/2} e^{\pm c_2 g(c_6 \log(1/s))}\\
		&\leq c_8 s^{(d- \gamma)/2}  e^{\pm c_2 g(c_6 \log(1/s))}
	\end{align*}
	for $s$ small.
	
	Combining these estimates gives the desired bound on the heat content.
\end{proof}

% ===============================
% = General branching processes =
% ===============================

\section{General branching processes}

\label{sec::gbp}

In this section, we provide a brief introduction to general branching processes and introduce the relevant notation. The presentation is inspired by \cite{Hambly2000, Jagers1975, Nerman1981}, where the reader is referred for further information.

% =========================================
% = Definitions and elementary properties =
% ==========================================

\subsection{Definitions and elementary properties}

The typical individual $x$ in a general branching process has offspring whose birth times are modelled by a point process $\xi_x$ on $(0, \infty)$, a lifetime modelled as a random variable $L_x$, and a \emph{characteristic} which is a (possibly random) c\`adl\`ag function $\phi$ on $\R$. The triples $(\xi_x, L_x, \phi_x)_x$ are sometimes assumed to be i.i.d.\ but we will allow $\phi_x$ to depend on the progeny of $x$; also, we do \emph{not} make any assumptions about the joint distribution of $(\xi_x, L_x, \phi_x)$. When discussing a generic individual, it is convenient to drop the dependence on $x$ and write $(\xi, L, \phi)$. We shall use the notation
$$
\xi(t) = \xi((0, t]), \quad \nu(dt) = \bE \xi(dt), \quad \xi_\gamma(dt) = e^{-\gamma t} \xi(dt) \quad \text{and} \quad \nu_\gamma(dt) = \bE \xi_\gamma(dt).
$$

We assume that the process has a \emph{Malthusian parameter} $\gamma \in (0, \infty)$ for which
\begin{equation}\label{eq::MalthusianParameter}
\nu_\gamma(\infty) = 1.
\end{equation}
In particular, the general branching process is super-critical, i.e.\ $\nu(\infty) > 1$. We also assume that $\nu_\gamma$ has a finite first moment.

It is natural to index the individuals of the population by their ancestry, which is the random subtree $\cT$ of the set of finite words
\begin{equation}\label{eq::addressSpace}
I = \bigcup_{k = 0}^\infty \N^{k}, \quad \text{with} \quad \N^0 = \emptyset,
\end{equation}
generated by the underlying Galton-Watson process. The birth time of $x$ is written $\sigma_x$ and we have the relation
$$
\xi_x = \sum_{i = 1}^{\xi_x(\infty)} \delta_{\sigma_{x,i} - \sigma_x},
$$
where $\delta$ is the Dirac measure and $x,i$ is the concatenation of the words $x$ and $i$.

The individuals of the population are counted using the characteristic $\phi$ through the \emph{characteristic counting process} $Z^\phi$ defined by
$$
Z^\phi(t) = \sum_{x \in \cT} \phi_x (t - \sigma_x) = \phi_\emptyset(t) + \sum_{i = 1}^{\xi_\emptyset(\infty)} Z_i^\phi(t- \sigma_i),
$$
where the $Z^\phi_i$ are i.i.d.\ copies of $Z^\phi$. Later, we will define characteristic functions whose corresponding counting process contains information about the inner Minkowski content and the heat content.

Results are often proved under the assumption that $\phi$ vanishes for negative times; e.g.\ \cite{Nerman1981}. When this assumption is not satisfied, we may work with $Z^\phi \bone_{[0, \infty)}$ instead of $Z^\phi$ to study the asymptotics of $Z^\phi$ as $t \to \infty$. Indeed, $Z^\phi \bone_{[0, \infty)}$ also appears as a counting process since
\begin{equation}\label{eq::oneSidedToTwoSided}
\begin{aligned}
	Z^\chi(t) & = Z^\phi(t) \bone_{t \geq 0}\\
			& = \phi_\emptyset(t) \bone_{t \geq 0} + \sum_{i = 1}^{\xi_\emptyset(\infty)} Z_i^\phi(t- \sigma_i) \bone_{0 \leq t < \sigma_i} + \sum_{i=1}^{\xi_\emptyset(\infty)} Z_i^\phi(t- \sigma_i) \bone_{t- \sigma_i \geq 0}\\
			& = \chi_\emptyset(t) + \sum_{i=1}^{\xi_\emptyset(\infty)} Z_i^\chi(t- \sigma_i),
\end{aligned}
\end{equation}
where the first, respectively last, equation defines $Z^\chi$, respectively $\chi$. It is clear that $\chi$ is a characteristic (that depends on the progeny in general), that $Z^\chi$ is its corresponding counting process, and that the asymptotics of $Z^\phi$ and $Z^\chi$ as $t \to \infty$ are the same.

The growth of the population of a general branching process is captured by the process $M$ defined by
\begin{equation*}\label{eq::fundamentalMartingale}
M_t = \sum_{x \in \Lambda_t} e^{-\gamma \sigma_x}, \quad \text{where} \quad \Lambda_t = \{x = (y, i) : \sigma_{y} \leq t < \sigma_x\}
\end{equation*}
is the set of individuals born after time $t$ to parents born up to time $t$. The process $M$ is a non-negative c\`adl\`ag $\cF_t$-martingale with unit expectation, where
$$
\cF_t = \sigma((\xi_x, L_x) : \sigma_x \leq t).
$$
By martingale convergence, $M_t \to M_\infty$, almost surely, as $t \to \infty$. Furthermore, if
\begin{equation}\label{eq::xlogxCondition}
	\bE\left[\xi_\gamma(\infty) (\log \xi_\gamma(\infty))_+ \right] < \infty,
\end{equation}
then $M$ is uniformly integrable and $M_\infty$ is positive on the event that there is no extinction. Proofs of these facts may be found in \cite{Jagers1975, Nerman1981,Doney1972, Doney1976}.

% =========================
% = Laws of large numbers =
% =========================

\subsection{Strong law of large numbers} In this subsection, we state  the strong law of large numbers proved by Nerman in \cite{Nerman1981}. The result assumes the characteristic is non-negative; in applications, it suffices to write the characteristic as the difference of its positive and negative parts.

We will need the following regularity condition.

\begin{condition}\label{cond::ghFunCondition}
There exist non-increasing bounded positive integrable c\`adl\`ag functions $g$ and $h$ on $[0, \infty)$ such that
$$
\bE \left[ \sup_{t \geq 0} \frac{ \xi_\gamma(\infty) - \xi_\gamma(t)}{g(t)} \right] < \infty \quad \text{and} \quad \bE \left[ \sup_{t \geq 0} \frac{e^{-\gamma t} \phi(t)}{h(t)} \right] < \infty.
$$	
\end{condition}

The first part of the condition is satisfied if the expected number of offspring is finite because then, choosing $g(t) = 1 \wedge t^{-2}$, we have
$$
\frac{\xi_\gamma(\infty)- \xi_\gamma(t)}{g(t)} \leq \int_t^\infty \frac{1}{g(s)}\xi_\gamma(ds) \leq \int_0^\infty \frac{1}{g(s)} \xi_\gamma(ds) \leq \sup_{u \geq 0} \{(1 \wedge u^2) e^{-\gamma u}\} \xi(\infty),
$$
which has finite expectation.

We can now state the strong law of large numbers.

\begin{theorem}[Nerman]\label{thm::NermanSLLN}
	Let $(\xi_x, L_x, \phi_x)_x$ be a general branching process with Malthusian parameter $\gamma$, where $\phi \geq 0$ and $\phi(t) = 0$ for $t < 0$. Assume that $\nu_\gamma$ is non-lattice. Assume further that Condition \ref{cond::ghFunCondition} is satisfied.
	
	Then,
	$$
	z^\phi(t) \to z^\phi(\infty) = \frac{\int_0^\infty e^{- \gamma s} \bE \phi(s) ds}{\int_0^\infty s \nu_\gamma(ds)},
	$$
	some finite constant, and
	$$
	e^{-\gamma t} Z^\phi(t) \to z^\phi(\infty) M_\infty, \text{ a.s.},
	$$
	as $t \to \infty$, where $M_\infty$ is the almost sure limit of the fundamental martingale of the general branching process. Furthermore, if $M$ is uniformly integrable, then the convergence also takes place in $L^1$.
\end{theorem}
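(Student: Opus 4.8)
The plan is to combine the Key Renewal Theorem, which handles the deterministic normalisation $z^\phi(t) = e^{-\gamma t}\bE Z^\phi(t)$, with a decomposition of the population along the random barrier $\Lambda_u$ that, together with the martingale convergence $M_u \to M_\infty$, delivers the almost sure statement.

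First I would establish the convergence of $z^\phi(t)$. Taking expectations in the branching decomposition $Z^\phi(t) = \phi_\emptyset(t) + \sum_i Z_i^\phi(t-\sigma_i)$ and conditioning on the first-generation birth times yields the renewal equation $m^\phi(t) = \bE\phi(t) + \int_0^t m^\phi(t-s)\,\nu(ds)$, where $m^\phi = \bE Z^\phi$. Tilting by $e^{-\gamma t}$ turns $\nu$ into the probability measure $\nu_\gamma$ (this is exactly the role of the Malthusian condition \eqref{eq::MalthusianParameter}) and produces a genuine renewal equation for $e^{-\gamma t} m^\phi(t)$ with immigration term $e^{-\gamma t}\bE\phi(t)$. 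The second part of Condition \ref{cond::ghFunCondition} bounds this term by a constant multiple of the non-increasing, bounded, integrable function $h$, so it is directly Riemann integrable; since $\nu_\gamma$ is non-lattice with finite first moment, the Key Renewal Theorem gives $z^\phi(t) \to z^\phi(\infty)$ with the stated value.

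For the almost sure convergence I would fix a barrier time $u$ and split the population according to $\Lambda_u$, writing
$$
e^{-\gamma t} Z^\phi(t) = e^{-\gamma t}\sum_{\sigma_x \leq u}\phi_x(t-\sigma_x) + \sum_{x \in \Lambda_u} e^{-\gamma\sigma_x}\,e^{-\gamma(t-\sigma_x)} Z_x^\phi(t-\sigma_x),
$$
where the subtree processes $Z_x^\phi$ are i.i.d.\ copies of $Z^\phi$ independent of $\cF_u$ and each $\sigma_x$ with $x \in \Lambda_u$ is $\cF_u$-measurable. The first sum has finitely many terms and is $o(1)$ as $t \to \infty$, since the integrable envelope $h$ forces the tilted characteristics to decay. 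Taking the conditional expectation of the second sum given $\cF_u$ and invoking the convergence of $z^\phi$ from the first step identifies its conditional mean as $z^\phi(\infty)\sum_{x\in\Lambda_u} e^{-\gamma\sigma_x} = z^\phi(\infty) M_u$, and $M_u \to M_\infty$ as $u \to \infty$; this pins down the candidate limit $z^\phi(\infty) M_\infty$.

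The hard part will be controlling the fluctuation of the stopping-line sum about its conditional mean and justifying the interchange of the limits $t \to \infty$ and $u \to \infty$. This is precisely where Condition \ref{cond::ghFunCondition} is needed in full: the $g$-part makes the contribution of the offspring born just after the barrier summable, while the $h$-part supplies an integrable envelope dominating the tilted characteristic uniformly in the individual. I would sandwich $Z^\phi$ between counting processes associated with monotone characteristics and estimate, conditionally on $\cF_u$, the deviation of $\sum_{x\in\Lambda_u}$ from $z^\phi(\infty) M_u$, showing it tends to $0$ as $u\to\infty$ uniformly for large $t$; combined with the martingale convergence this yields $e^{-\gamma t} Z^\phi(t) \to z^\phi(\infty) M_\infty$ almost surely. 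Finally, under the $x\log x$ condition \eqref{eq::xlogxCondition} the martingale $M$ is uniformly integrable, and a matching uniform integrability bound for $\{e^{-\gamma t} Z^\phi(t)\}$, again furnished by the envelopes $g$ and $h$, upgrades the almost sure convergence to $L^1$.
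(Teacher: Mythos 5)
The paper gives no proof of this statement: it is quoted as Nerman's theorem and attributed to \cite{Nerman1981}, so the only meaningful comparison is with Nerman's own argument, whose outline your sketch in fact reproduces: key renewal theorem for the tilted mean, decomposition over the stopping line $\Lambda_u$, identification of the limit $z^\phi(\infty)M_\infty$ via $M_u \to M_\infty$. Your first step is essentially complete: the Malthusian tilting turns the mean equation into a proper renewal equation, and the $h$-envelope of Condition \ref{cond::ghFunCondition} dominates $e^{-\gamma t}\bE\phi(t)$ by a non-increasing integrable function (and, being dominated and c\`adl\`ag, this mean function is a.e.\ continuous), so the key renewal theorem applies and yields $z^\phi(t) \to z^\phi(\infty)$ with the stated value.

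Everything after that, however, is a plan rather than a proof, and the unexecuted step is exactly where the theorem lives. Conditionally on $\cF_u$, the terms of $\sum_{x\in\Lambda_u} e^{-\gamma\sigma_x}\, e^{-\gamma(t-\sigma_x)}Z^\phi_x(t-\sigma_x)$ are independent but not identically distributed (each depends on $t-\sigma_x$), the line contains on the order of $e^{\gamma u}$ individuals, and no second-moment hypotheses are available: Condition \ref{cond::ghFunCondition} provides only first-moment envelopes, so a Chebyshev or variance bound has nothing to feed on, and "sandwich $Z^\phi$ between monotone counting processes and show the deviation vanishes uniformly for large $t$" is a statement of intent, not an argument. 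In Nerman's paper this uniform control is the bulk of the work: truncation to bounded characteristics with bounded support, almost sure convergence along lattice times $nc$ by a conditional Borel--Cantelli argument, control of the oscillation between lattice times (this is where the monotone comparison characteristics genuinely enter, and where the $g$-envelope bounds the contribution of offspring born just after the line), and finally removal of the truncation by approximation. Two smaller inaccuracies: the conditional mean of your stopping-line sum given $\cF_u$ is $\sum_{x\in\Lambda_u}e^{-\gamma\sigma_x}z^\phi(t-\sigma_x)$, which equals $z^\phi(\infty)M_u$ only asymptotically in $t$; and the closing $L^1$ claim (uniform integrability of $\{e^{-\gamma t}Z^\phi(t)\}$ deduced from that of $M$) is likewise asserted without derivation. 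As written, then, your proposal is a correct road map --- indeed Nerman's road map --- but the decisive estimates that make it a proof are missing.
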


A similar result holds when $\nu_\gamma$ is lattice; see \cite{Gatzouras2000}. But we will carefully avoid this case here.

% =======================================================
% = Applications to statistically self-similar fractals =
% =======================================================

\subsection{Applications to statistically self-similar fractals}
\label{subsec::appGBPtoFractals}

Intuitively, a random compact subset $K$ of $\R^d$ is statistically self-similar if there is a random number $N$ and random contracting similitudes $\Phi_1, \dots, \Phi_N$ such that
$$
K = \bigcup_{i=1}^N \Phi_i(K_i), \text { a.s.},
$$
where $K_1, \dots, K_N$ are i.i.d.\ copies of $K$. The reader is referred to \cite{Falconer1986, Graf1987, MW1986} for more information.

To encode $K$ as general branching process, we use the address space $I$ defined in \eqref{eq::addressSpace}. To each $x \in I$, we associate a random collection $(N_x, \Phi_{x,1}, \dots \Phi_{x,N_x})_{x \in I}$, where $N_x$ is a natural number and $\Phi_{x,i}$ are contracting similitudes whose ratios we write $R_{x,i}$. We assume that the collection is i.i.d.\ in $x$.

Write $\cT$ for the path of the Galton-Watson process generated by the random numbers $(N_x, x \in I)$, i.e.\ $\emptyset \in \cT$ and
$$
\quad y = y_1 \dots y_n \in \cT \iff y_1 \dots y_{n-1} \in \cT \text{ and } y_n \leq N_{y_1\dots y_{n-1}}.
$$
Starting with a compact set $K_\emptyset$, define, for $x = x_1 \dots x_n \in \cT$,
$$
K_x = \Phi_{x_1} \circ \dots \circ \Phi_{x_1 \dots x_n}(K_\emptyset) \quad \text{and} \quad K = \bigcap_{n = 1}^\infty \bigcup_{|x| = n} K_x,
$$
where $|x|$ is the length of the word $x$. Then $K$ is the statistically self-similar set corresponding to $K_\emptyset$ and $(N, \Phi_1, \dots, \Phi_N)$.\footnote{To be completely rigorous, we assume that the sets $(\interior K_x, x \in \cT)$ \emph{form a net}, i.e.\
$$
x \leq y \implies \interior K_y \subset \interior K_x \quad \text{and} \quad \interior K_x \cap \interior K_y = \emptyset \text{ if neither $x \leq y$ nor $y \leq x$}.
$$

We also assume that the construction described above is \emph{proper} in the sense of \cite{Falconer1986a}, i.e.\ that every cut of $\cC$ of $\cT$ satisfies the condition: For every $x \in \cC$, there exists a point in $K_x$ that does not lie in any other $K_y$ with $y \in \cC$.}

The Hausdorff dimension of statistically self-similar sets is given by the following formula \emph{\`a la} Moran and Hutchinson \cite{Moran1946,Hutchinson1981}. The statement is adapted from \cite{Falconer1986, Graf1987, MW1986}.

\begin{theorem}\label{thm::dimHausStatSelfSimilar}
	Let $K$ be a random statistically self-similar set as above. Then, on the event that the set $K$ is not empty,
	$$
	\dim K = \inf\left\{ s : \bE \left(\sum_{i=1}^N R_i^s \right) \leq 1 \right\}, \text{ a.s.}
	$$
\end{theorem}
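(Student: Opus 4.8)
The plan is to prove the two inequalities separately, writing $s^\ast$ for the infimum on the right-hand side. First I would record that the map $s \mapsto \phi(s) := \bE\big(\sum_{i=1}^N R_i^s\big)$ is continuous and strictly decreasing where it is finite, because each $R_i \in (0,1)$ and the expected number of offspring is finite; together with $\phi(0) = \bE N > 1$ (super-criticality) this produces a unique $s^\ast$ with $\phi(s^\ast) = 1$. For a word $x = x_1 \cdots x_n \in \cT$ I write $\bar R_x = \prod_{j=1}^n R_{x_1 \cdots x_j}$ for the contraction ratio of the composite similitude, so that $\diam K_x \leq \bar R_x \diam K_\emptyset$, and I set $W_n(s) = \sum_{|x|=n} \bar R_x^{\,s}$. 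The recursive structure of the construction gives $\bE[W_{n+1}(s) \mid \cF_n] = \phi(s) W_n(s)$, so $W_n(s^\ast)$ is a non-negative martingale and converges a.s.\ to a finite limit $W_\infty$; it is the discrete-generation counterpart of the fundamental martingale $M$ of the associated general branching process, in which individual $x$ is born at time $\sigma_x = -\log \bar R_x$, so that the Malthusian condition \eqref{eq::MalthusianParameter} reads $\bE\sum_i R_i^{s^\ast} = 1$ and identifies the Malthusian parameter with $s^\ast$.

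For the upper bound $\dim K \leq s^\ast$, I would cover $K$ by the cylinders $\{K_x : x \in \cC_\epsilon\}$, where $\cC_\epsilon$ is the cut of $\cT$ consisting of those $x$ minimal subject to $\bar R_x \leq \epsilon$. Each such $K_x$ has diameter at most $\epsilon \diam K_\emptyset$, so these covers refine as $\epsilon \to 0$, and for any $s > s^\ast$ an optional-stopping (Wald-type) argument applied to the supermartingale $W_n(s)$ shows that $\bE\big(\sum_{x \in \cC_\epsilon} \bar R_x^{\,s}\big)$ stays bounded, indeed tends to $0$. This controls the $s$-dimensional Hausdorff premeasure and yields $\mathcal H^s(K) < \infty$ a.s.\ for every $s > s^\ast$, whence $\dim K \leq s^\ast$.

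For the lower bound $\dim K \geq s^\ast$, I would build a random Frostman measure on $K$ via the mass distribution principle. Assuming the $x\log x$ condition \eqref{eq::xlogxCondition} so that $W_\infty > 0$ on the survival event, I distribute mass over the tree by assigning to $K_x$ the mass $\bar R_x^{\,s^\ast} W_\infty^{(x)}$, where $W_\infty^{(x)}$ is the martingale limit of the subtree rooted at $x$; the martingale property makes these weights consistent across cuts and defines a measure $\mu$ supported on $K$ with total mass $W_\infty$. It then remains to estimate $\mu(B(z,r))$ for $z \in K$ and small $r$. Choosing the cut $\cC_r$ at scale $r$ and invoking the net and properness hypotheses of the footnote — which guarantee that only a bounded number of cylinders $K_x$ with $x \in \cC_r$ can meet a ball of radius $r$ — one obtains $\mu(B(z,r)) \leq C\, r^{s^\ast}$ for a suitable random constant $C$, and the mass distribution principle then gives $\dim K \geq s^\ast$ a.s.\ on survival.

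The main obstacle is this uniform ball estimate in the lower bound, where two difficulties combine. The first is the geometric separation needed to bound the number of overlapping cylinders, which is exactly what the net condition is designed to supply. The second is the randomness of the weights $W_\infty^{(x)}$, which are unbounded and range over infinitely many words $x$; controlling these subtree limits uniformly — typically through moment bounds afforded by \eqref{eq::xlogxCondition} together with a Borel--Cantelli argument along the cuts $\cC_r$ — is the delicate step, and it is here that the hypotheses of Theorem \ref{thm::dimHausStatSelfSimilar} and the arguments of \cite{Falconer1986, Graf1987, MW1986} are genuinely needed.
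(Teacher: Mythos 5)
The paper does not actually prove Theorem \ref{thm::dimHausStatSelfSimilar}: it is imported from \cite{Falconer1986, Graf1987, MW1986}, so your attempt has to be measured against the standard arguments in those references. Your setup (the function $\phi$, the identification of $s^\ast$ with the Malthusian parameter, the martingale $W_n(s^\ast)$) and your entire upper bound are correct and are exactly the standard route: for $s > s^\ast$ the stopping-line estimate gives $\bE\bigl(\sum_{x \in \cC_\epsilon} \bar R_x^{\,s}\bigr) \leq \epsilon^{s - s^\ast}\, \bE\bigl(\sum_{x \in \cC_\epsilon} \bar R_x^{\,s^\ast}\bigr) \leq \epsilon^{s-s^\ast}$, hence $\mathcal{H}^s(K) = 0$ a.s.\ and $\dim K \leq s^\ast$.

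The genuine gap is in the lower bound, and it is not merely the ``delicate step'' you defer: the estimate you aim for, $\mu(B(z,r)) \leq C\, r^{s^\ast}$ with a single random constant $C$, is \emph{false} whenever the offspring configuration is genuinely random (i.e.\ $\sum_i R_i^{s^\ast}$ is not a.s.\ equal to $1$), which is precisely the case of interest here. In that case $W_\infty$ has unbounded support, and along $\mu$-a.e.\ branch $(x_n)$ fresh independent subtrees attach at every level, so by Borel--Cantelli $\limsup_n W_\infty^{(x_n)} = \infty$ a.s.; since $z \in K_{x_n}$ gives $\mu(B(z, \bar R_{x_n} \diam K_\emptyset)) \geq \mu(K_{x_n}) = \bar R_{x_n}^{\,s^\ast} W_\infty^{(x_n)}$, the density ratio $\limsup_{r \to 0} \mu(B(z,r))/r^{s^\ast}$ is infinite at $\mu$-a.e.\ point, contradicting any uniform Frostman bound at exponent $s^\ast$. (Equivalently: the exact Hausdorff gauge of such random recursive constructions is known, by later work of Graf, Mauldin and Williams, to carry a $\log\log(1/r)$ correction, so typically $\mathcal{H}^{s^\ast}(K) = 0$ a.s., whereas your bound plus the mass distribution principle would force $\mathcal{H}^{s^\ast}(K) \geq W_\infty/C > 0$.) No moment bound or Borel--Cantelli refinement can rescue the exponent $s^\ast$. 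The standard repairs are: (i) prove $\mu(B(z,r)) \leq C_t\, r^{t}$ for each fixed $t < s^\ast$, where the slack $r^{s^\ast - t}$ absorbs both the weight fluctuations and the multiplicity count, and then let $t \uparrow s^\ast$; or (ii) bypass pointwise estimates altogether, as the cited references essentially do, via the potential-theoretic method: show $\bE \int\!\!\int |y - z|^{-t} \mu(dy)\,\mu(dz) < \infty$ for every $t < s^\ast$, which yields $\dim K \geq t$ a.s.\ on $\{\mu \neq 0\}$. Two smaller points: your claim that only boundedly many cylinders of the cut $\cC_r$ meet $B(z,r)$ requires the ratios to be bounded away from zero (true for the snowflakes of Section \ref{sec::selfSimilar} since $A$ is bounded, but not granted by the theorem as stated, where the cut may contain cylinders far smaller than $r$); and \eqref{eq::xlogxCondition} is an additional hypothesis relative to the statement --- harmless for the paper's applications, but the references do not need it.
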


To make the underlying Galton-Watson process structure of statistically self-similar sets into a general branching process, we specify birth times by setting
$$
\xi_x = \sum_{i=1}^{N_x} \delta_{-\log R_{x,i}},
$$
and the lifetimes by setting $L_x = \sup_i (\sigma_{x,i} - \sigma_x)$.

For the first generation of offspring $e^{-\sigma_i} = R_i$. More generally, with this parametrisation, the offspring $x$ born around time $t$ correspond to compact sets $K_x$ of size roughly $e^{-t}$ in the construction.

Notice that
\begin{equation}\label{eq::calculationHausdorffDim}
\bE\int_{0}^\infty e^{-s x} \xi(dx) = \bE \left( \sum_{i=1}^N R_i^s \right),
\end{equation}
so that the Malthusian parameter of the underlying general branching process is equal to the almost sure Hausdorff dimension of the set $K$; compare \eqref{eq::calculationHausdorffDim} with \eqref{eq::MalthusianParameter}.

% ===================================
% = Statistically self-similar sets =
% ===================================

\section{Statistically self-similar snowflakes}
\label{sec::selfSimilar}

In this section, we study the geometry and heat content asymptotics of a family of Koch type snowflakes whose boundary is statistically self-similar. Our main tool will be Nerman's strong law of large numbers.

Let us stress that the discussion here can be extended to any other snowflakes with statistically self-similar boundary so long as the corresponding general branching process satisfies the assumptions of the strong law of large numbers. However, for brevity, we will focus on a particular class of snowflakes that can easily be connected to those discussed in Section \ref{sec::scaleHomogeneous}. Our aim is to emphasise that the behaviour of the heat content is qualitatively different when the boundary of the snowflake is statistically self-similar and not space homogeneous, because of additional spatial independence.

% ================
% = Construction =
% ================

\subsection{Construction}

Start with the segment $[0,1]$ and pick $a \in A$, a bounded subset of $\N$, randomly according to the probability distribution $(p_a, a \in A)$. Consider the building block $K(a)$ described in Section \ref{sec::scaleHomogeneous}. Then replace each linear piece of $K(a)$ by a scaled i.i.d.\ copy of itself, i.e.\ again using $(p_a, a \in A)$. Iterating indefinitely, we obtain a sequence of curves converging to a statistically self-similar curve $K$. Indeed, in the notation of Subsection \ref{subsec::appGBPtoFractals}, it suffices to set
$$
(N(a), R_1(a), \dots, R_N(a)) = (m(a), \ell(a)^{-1}, \dots, \ell(a)^{-1});
$$
the maps $(\Phi_1, \dots, \Phi_N)$ can easily be deduced from this.

The corresponding general branching process $(\xi,L)$ (no characteristic just yet) is obtained as detailed above. Figure \ref{fig::selfSimilarCurve} contains an approximation of $K$ when $A = \{1, 2,3 \}$ and $(p_a, a \in A)$ is uniform.

\begin{figure}
\includegraphics[width = 10 cm]{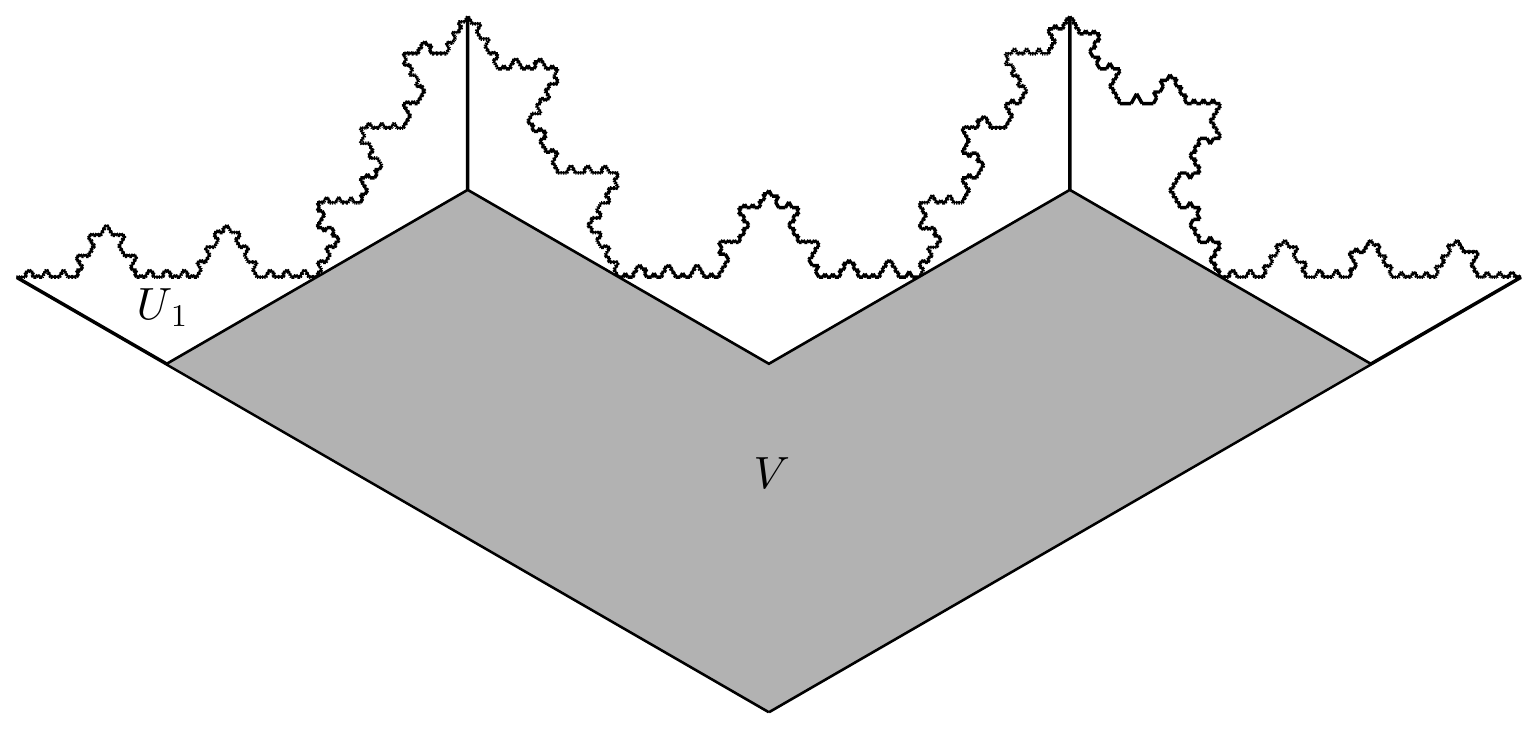}
\caption{Statistically self-similar curve $K$ for a third of the snowflake $U$ with its natural decomposition.}
\label{fig::selfSimilarCurve}
\end{figure}

Finally, the snowflake $D$ is defined as the simply connected interior of the Jordan curve created using three i.i.d.\ copies of $K$.

To ensure that $\nu_\gamma$ is non-lattice, we will make the following assumption.

\begin{assumption}\label{ass::nonLattice}
The set $A$ contains two elements $a_1$ and $a_2$ such that
$$
p_{a_1} \text{ and } p_{a_2} \in (0,1) \quad \text{and} \quad \frac{\log \ell(a_1)}{\log \ell(a_2)} \notin \Q.
$$	
\end{assumption}

% ===========================================
% = Fractal dimension and Minkowski content =
% ===========================================

\subsection{Fractal dimension and Minkowski content}

Now that we have defined the statistically self-similar snowflakes, we will use the theory of general branching processes to study the volume of their inner tubular neighbourhoods.

\begin{theorem}\label{thm::minkowskiContentSnowflakeSLLN}
	For the statistically self-similar snowflake $D$, we have
	$$
	\epsilon^{\gamma- 2} \mu(\epsilon) \to \cM N_\infty, \text{ a.s.} \text{ and in } L^1,
	$$
	as $\epsilon \to 0$, for some positive constant $\cM$ and positive random variable $N_\infty$ with unit expectation.
	
	In particular, the Minkowski dimension of $\partial D$ exists and is equal to $\gamma$ almost surely, and the inner Minkowski content exists, is finite, and given by $\cM N_\infty$ almost surely.
\end{theorem}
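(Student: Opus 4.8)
The plan is to realise the rescaled inner-neighbourhood volume as a characteristic counting process of the general branching process encoding the boundary and then invoke Nerman's strong law of large numbers, Theorem \ref{thm::NermanSLLN}. Writing $t = \log(1/\epsilon)$, the target quantity is $\epsilon^{\gamma-2}\mu(\epsilon) = e^{(2-\gamma)t}\mu(e^{-t})$, which is exactly the Nerman normalisation $e^{-\gamma t}Z^\phi(t)$ provided one can arrange
$$
Z^\phi(t) = e^{2t}\mu(e^{-t}).
$$
Recall that $\gamma$ is the Malthusian parameter and, by \eqref{eq::calculationHausdorffDim}, the almost sure Hausdorff dimension. So the whole argument reduces to exhibiting a legitimate characteristic $\phi$ realising this identity and to checking the hypotheses of Theorem \ref{thm::NermanSLLN}.

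First I would establish the self-similar decomposition of the inner tubular neighbourhood. Since one third of $\partial D$ is the statistically self-similar curve $K = \bigcup_{i=1}^N \Phi_i(K_i)$ with contraction ratios $R_i = e^{-\sigma_i}$, each piece $\Phi_i(K_i)$ is a scaled i.i.d.\ copy, whose inner $\epsilon$-neighbourhood has volume $R_i^2$ times the $(\epsilon/R_i)$-neighbourhood volume of the copy. Summing over $i$ and correcting near the finitely many points where consecutive pieces meet gives
$$
\mu(\epsilon) = \sum_{i=1}^N R_i^2\, \mu^{(i)}(\epsilon/R_i) + \mathrm{corr}(\epsilon),
$$
where $\mu^{(i)}$ are the neighbourhood volumes of the i.i.d.\ sub-copies and $\mathrm{corr}(\epsilon)$ collects the junction contributions. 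The key geometric estimate is that each junction contributes an area of order $\epsilon^2$, so $\mathrm{corr}(\epsilon) = O(\epsilon^2)$ for small $\epsilon$ and is bounded for all $\epsilon$. Setting $\phi(t) = e^{2t}\,\mathrm{corr}(e^{-t})$, this decomposition becomes precisely the branching recursion $Z^\phi(t) = \phi_\emptyset(t) + \sum_{i=1}^N Z_i^\phi(t-\sigma_i)$ with $Z^\phi(t) = e^{2t}\mu(e^{-t})$, so $\phi$ is a genuine characteristic depending only on the immediate offspring geometry. Since $\phi$ does not vanish for $t<0$, I would pass to $Z^\phi\bone_{[0,\infty)}$ via \eqref{eq::oneSidedToTwoSided}, whose characteristic $\chi$ vanishes on negatives and shares the large-$t$ asymptotics; and since $\mathrm{corr}$ need not have a fixed sign, I would split $\chi = \chi_+ - \chi_-$ and apply Nerman's theorem to each part, as the excerpt permits.

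Next I would verify the hypotheses. Assumption \ref{ass::nonLattice} makes $\nu_\gamma$ non-lattice. Because $A$ is bounded, the offspring number $N = m(a)$ and the ratios $R_i = \ell(a)^{-1}$ are bounded, so the first half of Condition \ref{cond::ghFunCondition} and the moment condition \eqref{eq::xlogxCondition} hold, giving a uniformly integrable martingale $M$; the bound $\mathrm{corr}(\epsilon)=O(\epsilon^2)$ shows $\chi$ is bounded, whence $e^{-\gamma t}\chi(t)$ decays exponentially and the second half of Condition \ref{cond::ghFunCondition} holds. Theorem \ref{thm::NermanSLLN} then yields $e^{-\gamma t}Z^\phi(t) \to z^\phi(\infty)M_\infty$ almost surely and in $L^1$, that is $\epsilon^{\gamma-2}\mu(\epsilon) \to \cM N_\infty$ with $\cM = z^\phi(\infty)$ and $N_\infty = M_\infty$, the latter of unit expectation. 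Survival is certain here, since every individual has $N = m(a) \geq 4$ offspring, so $N_\infty > 0$ almost surely; and $\cM>0$ because at scale $\epsilon$ there are of order $\epsilon^{-\gamma}$ pieces each contributing inner area of order $\epsilon^2$, forcing $\mu(\epsilon) \gtrsim \epsilon^{2-\gamma}$. The positive finite limit then gives $\log\mu(\epsilon)/\log\epsilon \to 2-\gamma$, so the inner Minkowski dimension exists and equals $\gamma$, and the inner Minkowski content exists, is finite, and equals $\cM N_\infty$.

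The main obstacle I anticipate is the rigorous control of the junction correction $\mathrm{corr}(\epsilon)$: one must verify that the inner neighbourhood of $K$ is genuinely the sum of the scaled sub-neighbourhoods up to contributions localised near the finitely many junction points, that the intrinsic and ambient inner neighbourhoods of a sub-copy agree there, and that each such contribution is uniformly $O(\epsilon^2)$, so that $\chi$ is bounded and integrable against $e^{-\gamma t}$. A secondary bookkeeping point is the top level: $\partial D$ is built from three i.i.d.\ copies of $K$ joined at the corners, so one either roots the branching process at a single ancestor with these three offspring or adds the three limits and renormalises $\cM$ so that $N_\infty$ retains unit expectation, the corner joins contributing only an $O(\epsilon^2)$ error that does not affect the limit.
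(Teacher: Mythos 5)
Your proposal is correct and follows essentially the same route as the paper's proof: decompose the inner neighbourhood volume of one third of the snowflake self-similarly with an $O(\epsilon^2)$ error term, realise $e^{2t}\mu(e^{-t})$ as a characteristic counting process, pass to the one-sided version via \eqref{eq::oneSidedToTwoSided}, verify Condition \ref{cond::ghFunCondition} and \eqref{eq::xlogxCondition} using boundedness of $A$ and of the characteristic, apply Theorem \ref{thm::NermanSLLN}, and recombine the three i.i.d.\ copies at the end. The only cosmetic difference is that the paper takes the error term to be the nonnegative volume of the neighbourhood lying in the central polygonal region $V$ (so no positive/negative splitting of $\chi$ is needed), and it bounds $\chi$ by combining the $O(\epsilon^2)$ estimate with the boundedness of $\xi(\infty)$ and of $Z^\phi$ at negative times, a detail your verification leaves implicit.
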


Notice that the reasoning used in Lemma \ref{lem::tubNeighbourhoodScaleHomo} shows that in this situation, again, the inner Minkowski dimension coincides with the standard definition.

\begin{proof}
	The snowflake $D$ is built from three i.i.d.\ copies of $U$, the open set whose boundary is made of two linear pieces and $K$, as depicted in Figure \ref{fig::selfSimilarCurve}.

	We will focus on how to deal with one such third $U$ for now. To do that, put
	$$
	\tilde \mu_U(\epsilon) = \vol_2(\{x \in U : d(x, K) \leq \epsilon\}),
	$$
	i.e.\ $\tilde \mu$ measures the volume of the part of the inner tubular neighbourhood close to the fractal part $K$ of the boundary of $U$ only.
	
	By construction of $K$, the set $U$ is made of a polygonal region $V$ (shaded in Figure in Figure \ref{fig::selfSimilarCurve}), say, and $m(a)$ copies $U_i$ of itself scaled by a random factor $R_i$. Therefore,
	\begin{equation}\label{eq::decompostitionVolumeTube}
	\tilde \mu_U(\epsilon) = \theta(\epsilon) + \sum_{i=1}^{m(a)} \tilde \mu_{R_i U_i}(\epsilon),
	\end{equation}
	where $\theta(\epsilon)$ is an error term bounded by $c_1 \epsilon^2$, as is clear from Figure \ref{fig::selfSimilarCurve} (the existence of the constant $c_1$ also uses that the set $A$ and therefore $m(a)$ is bounded). Furthermore, by scaling,
	\begin{equation}\label{eq::volumeScaling}
	\tilde \mu_{R_i U_i}(\epsilon) = R_i^2 \tilde \mu_{U_i}(R_i^{-1} \epsilon).
	\end{equation}
	
	In the language of the general branching process, each $R_i U_i$ corresponds to an offspring of the fractal $K$ born at time $\sigma_i = - \log R_i$. Therefore, putting, for $t \in \R$,
	$$
	Z^\phi(t) = e^{2t} \tilde \mu_U (e^{-t}) \quad \text{and} \quad \phi(t) = e^{2t} \theta(e^{-t}),
	$$
	the relations \eqref{eq::decompostitionVolumeTube} and \eqref{eq::volumeScaling} combine to produce
	$$
	Z^\phi(t) = \phi(t) + \sum_{i = 1}^{\xi(\infty)} Z_i^\phi(t- \sigma_i),
	$$
	where the $Z_i^\phi$ are i.i.d.\ copies of $Z$ and $\phi$ is bounded. Therefore, $Z^\phi$ is the counting process of the characteristic $\phi$.
	
	To apply the strong law of large numbers, we need to consider
	$$
	Z^\chi(t) = Z^\phi(t) \bone_{t \geq 0} = \chi(t) + \sum_{i = 1}^{\xi(\infty)} Z_i^\chi(t- \sigma_i),
	$$
	defined using \eqref{eq::oneSidedToTwoSided}.
	
	Since $\xi(\infty)$ is bounded (because $A$ is) and $Z^\phi(t)$ is bounded for negative times (by $\vol_2(U)$), the characteristic $\chi$ must be bounded as well. These observations imply that Condition \ref{cond::ghFunCondition} is satisfied as well as the integrability condition of \eqref{eq::xlogxCondition}. Therefore, by Theorem \ref{thm::NermanSLLN},
		$$
		e^{-\gamma t} Z^\chi(t) \to z^\chi(\infty) M_\infty \text{ a.s.} \text{ and in } L^1,
		$$
		as $t \to \infty$, where
		$$
		z^\chi(\infty) = \frac{\int_0^\infty u^\chi(t) dt}{\int_0^\infty t \nu_\gamma(dt)} \in (0, \infty).
		$$
		By definition of $Z^\chi$, this shows that
		\begin{equation}\label{eq::muepsilonconvergencethird}
			\epsilon^{\gamma - 2} \tilde \mu_U (\epsilon) \to z^\chi(\infty) M_\infty.
		\end{equation}
		
		As $\mu$ is the sum of three i.i.d.\ copies of $\tilde \mu_U$, we get the desired result by putting $\cM = 3 z^\chi(\infty)$ and setting $N_\infty$ to be a third of the sum of the three i.i.d.\ copies of $M_\infty$ given by the general branching process.
	\end{proof}

% ============================
% = Heat content asymptotics =
% ============================

\subsection{Heat content asymptotics}

Following the analysis performed above for the volume of the inner tubular neighbourhoods of $D$, we now use the theory of general branching processes to study the heat content of $D$. Our aim is to prove the following result.

\begin{theorem}\label{thm::heatContentAsymptotics}
	For the statistically self-similar Koch snowflake $D$
	$$
	s^{\gamma/2-1} E(s) \to \cE N_\infty, \text{a.s.} \text{ and in } L^1,
	$$
	as $s \to 0$, for some positive constant $\cE$ and positive random variable $N_\infty$ with unit expectation, where $\gamma = \dim_M \partial D$.
\end{theorem}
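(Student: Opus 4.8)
The plan is to mirror the proof of Theorem \ref{thm::minkowskiContentSnowflakeSLLN}, replacing the (additive) inner tubular volume by the heat content and again encoding the self-similar recursion as a general branching process to which Nerman's strong law of large numbers (Theorem \ref{thm::NermanSLLN}) applies. As before it suffices to treat one third $U$ of the snowflake, whose fractal boundary is a single copy $K$, and then sum three i.i.d.\ copies. Brownian scaling gives $\bP_{Rx}(T_{(RU)^c}\le s)=\bP_x(T_{U^c}\le R^{-2}s)$, hence, in dimension two, $E_{RU}(s)=R^2 E_U(R^{-2}s)$ (and the same relation for the localised version below). Setting $s=e^{-2t}$ and $\epsilon=s^{1/2}=e^{-t}$ and defining $Z^\phi(t)=e^{2t}\tilde E_U(e^{-2t})$ with birth times $\sigma_i=-\log R_i$ exactly as in Subsection \ref{subsec::appGBPtoFractals}, the Malthusian parameter is again $\gamma$ and the driving martingale is the same one, so the limit $N_\infty$ will coincide with the one in Theorem \ref{thm::minkowskiContentSnowflakeSLLN}.

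The localised heat content I would use is $\tilde E_U(s)=\int_U \bP_x(T_K\le s)\,dx$, which records only the absorption caused by the fractal part $K$ of $\partial U$ and is insensitive to the smooth base segments; this is the heat-content analogue of $\tilde\mu_U$. The aim is a decomposition $\tilde E_U(s)=\theta(s)+\sum_{i=1}^{m(a)}\tilde E_{R_iU_i}(s)$ paralleling \eqref{eq::decompostitionVolumeTube}, where $\tilde E_{R_iU_i}(s)=R_i^2\tilde E_{U_i}(R_i^{-2}s)$ by scaling, the analogue of \eqref{eq::volumeScaling}. Translating through $Z^\phi(t)=e^{2t}\tilde E_U(e^{-2t})$ and $\phi(t)=e^{2t}\theta(e^{-2t})$ then yields precisely the branching recursion $Z^\phi(t)=\phi(t)+\sum_i Z_i^\phi(t-\sigma_i)$, and passing to $Z^\chi=Z^\phi\bone_{[0,\infty)}$ via \eqref{eq::oneSidedToTwoSided} puts us in the setting of Theorem \ref{thm::NermanSLLN}.

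The main obstacle, and the essential difference from the volume computation, is that the heat content is nonlocal, so the decomposition is not exact: a Brownian path started in one sub-copy $R_iU_i$ may be absorbed by the fractal boundary of a neighbour or wander into the polygonal region $V$. The plan is to control the defect $\theta(s)$ via the Gaussian bound \eqref{eq::gaussianEstimate}. Writing $K=\bigcup_i R_iK_i$, for $x\in R_iU_i$ one has $\bP_x(T_{R_iK_i}\le s)\le\bP_x(T_K\le s)\le \bP_x(T_{R_iK_i}\le s)+\sum_{j\ne i}\bP_x(T_{R_jK_j}\le s)$; the cross terms are exponentially small once $x$ is farther than $c\sqrt{s}$ from the finitely many junction points where distinct sub-copies meet, while the $O(\sqrt s)$-neighbourhoods of those junctions have area $O(s)$ and so contribute at most $O(s)$. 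Since $V$ lies at a fixed positive distance from $K$ (the sub-copies separate it from the fractal), its contribution $\int_V\bP_x(T_K\le s)\,dx$ is $O(e^{-c/s})$ by \eqref{eq::gaussianEstimate}. Together these give $|\theta(s)|\le c\,s$, so $\phi(t)=e^{2t}\theta(e^{-2t})$ is bounded; as $\tilde E_U(s)\le \vol_2(U)$ makes $Z^\phi$ bounded for negative times, the induced characteristic $\chi$ is bounded as well.

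Finally I would verify the hypotheses of Theorem \ref{thm::NermanSLLN} exactly as in the proof of Theorem \ref{thm::minkowskiContentSnowflakeSLLN}: boundedness of $\xi(\infty)$ and of $\chi$ give Condition \ref{cond::ghFunCondition} and the integrability \eqref{eq::xlogxCondition} (so that $M$ is uniformly integrable and the convergence is also in $L^1$), while Assumption \ref{ass::nonLattice} ensures $\nu_\gamma$ is non-lattice. The conclusion $e^{-\gamma t}Z^\chi(t)\to z^\chi(\infty)M_\infty$ reads, after $s=e^{-2t}$ and $e^{(2-\gamma)t}=s^{\gamma/2-1}$, as $s^{\gamma/2-1}\tilde E_U(s)\to z^\chi(\infty)M_\infty$ almost surely and in $L^1$. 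Summing the three i.i.d.\ thirds, and absorbing the $O(e^{-c/s})$ discrepancy between $E_D(s)$ and the sum of the three localised heat contents, which arises only from paths crossing the interior away from $\partial D$, gives the theorem with $\cE=3z^\chi(\infty)$ and $N_\infty$ a third of the sum of the three martingale limits, the same $N_\infty$ as in Theorem \ref{thm::minkowskiContentSnowflakeSLLN}.
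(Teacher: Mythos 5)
Your overall architecture is the same as the paper's: localise to one third $U$ of the snowflake, encode the self-similar recursion as $Z^\phi(t)=\phi(t)+\sum_i Z^\phi_i(t-\sigma_i)$ with $s=e^{-2t}$, show the defect characteristic is bounded, pass to $Z^\chi=Z^\phi\bone_{[0,\infty)}$ via \eqref{eq::oneSidedToTwoSided}, apply Theorem \ref{thm::NermanSLLN}, and reassemble the three thirds; the limit is $z^\chi(\infty)M_\infty$ with the same martingale as in Theorem \ref{thm::minkowskiContentSnowflakeSLLN}, so $N_\infty$ is indeed the same random variable. Where you genuinely diverge is in the localised quantity and the control of the recursion defect. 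The paper does not work with hitting probabilities of $K$ alone: it sets $F_U(s)=\int_U w(s,x)\,dx$, where $w$ solves the heat equation in $U$ with boundary value $1$ on the fractal part and $0$ on the linear part, so that the defect is exactly the effect of imposing extra cooling on $\partial V$; this defect $\psi(s)=F_U(s)-\tilde F_U(s)$ is then bounded by $c_3 s\wedge c_4$ using Corollary 1.3 of \cite{vdBG1997} together with the geometric fact that the set of points close to both the fractal part and $V$ has area $O(\epsilon^2)$. Your route --- $\tilde E_U(s)=\int_U\bP_x(T_K\le s)\,dx$, with the defect handled by the union bound and the Gaussian estimate \eqref{eq::gaussianEstimate} integrated near the junction points --- is a more elementary, self-contained substitute for that citation, and the exact scaling relation and i.i.d.\ structure you need do hold for it. Both routes rest on the same geometric input: $V$, the slices $R_iU_i$, and the three thirds of $D$ meet one another only at finitely many points, with positive angular separation there.

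One claim in your argument is false, though repairable: $V$ does \emph{not} lie at a fixed positive distance from $K$. The closure of $V$ meets $K$ at the junction points (the endpoints of the level-one segments lie on both $\partial V$ and on the corresponding $R_iK_i$), and likewise the three fractal thirds of $\partial D$ meet at the three vertices of the snowflake. Consequently $\int_V\bP_x(T_K\le s)\,dx$ and the discrepancy $E_D(s)-\sum_j\tilde E_{U^{(j)}}(s)$ are not $O(e^{-c/s})$; they are only $O(s)$, obtained by exactly the Gaussian-integral-near-junctions argument you already use for the cross terms, namely $\int e^{-c\,d(x,p)^2/4s}\,dx=O(s)$ per junction $p$ (the paper's own final step records precisely $E_D(s)=\sum_i F_{U_i}(s)+O(s)$). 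Since $O(s)$ keeps $\phi$ bounded and $s=o(s^{1-\gamma/2})$ because $\gamma\in(1,2)$, the conclusion is unaffected, but the statement should be corrected. Two smaller points: ``exponentially small once $x$ is farther than $c\sqrt s$ from the junction points'' is imprecise (at distance $c\sqrt s$ the Gaussian bound is only a constant; what you need and what holds is the $O(s)$ bound on the integral), and the cone-separation estimate $d(x,R_jK_j)\ge c\,d(x,\text{junction points})$ for $x\in R_iU_i$, $j\ne i$, deserves an explicit sentence --- it is special to this slice decomposition, since the Koch-type curve touches its base chord along a Cantor set, so no such finitely-many-contact-points argument would survive if one decomposed $U$ into the regions between the chords $I_i$ and the curves $R_iK_i$.
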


The random variable appearing in this theorem is the same as that appearing in Theorem \ref{thm::minkowskiContentSnowflakeSLLN}. So this theorem implies that, for statistically self-similar snowflakes, one can recover both the Minkowski dimension and the inner Minkowski content from short time asymptotics of the heat content.

In the proof, we will use the elementary fact that if $D$ is a domain in $\R^d$ and $r \in (0, \infty)$, then
\begin{equation}\label{eq::scalingHeatContent}
E_{rD}(s) = r^{d} E_D(r^{-2} s).
\end{equation}

\begin{proof}
	As in the proof for the volume of the inner tubular neighbourhood, we first only consider a third $U$ of the snowflake and let $w$ be the solution of the heat equation with the boundary condition
		$$
		w(s, x) = \begin{cases} 1, & x \text{ in the fractal part of } \partial U,\\
		0, & x \text{ is in the linear part of } \partial U, \end{cases}
		$$
		and the initial condition
		$$
		w(0, x) = 0,\quad  x \in U.
		$$
	Now write $F_U$ for the heat content of $U$ with this altered boundary condition, i.e.\
	$$
	F_U(s) = \int_{U} w(s,x) dx.
	$$
	
	Before we can use the theory of general branching processes, we need some notation to understand the effect of adding extra cooling inside $U$. More precisely, write $\tilde w$ for the solution of the heat equation with boundary condition
	$$
	\tilde w(s, x) = \begin{cases} 1, & x \text{ in the fractal part of } \partial U,\\
	0, & x \text{ is in the linear part of } \partial U \text{ or in }\partial V,
	\end{cases}
	$$
	where $V$ is the polygonal region defined in the proof of Theorem \ref{thm::minkowskiContentSnowflakeSLLN}, and the initial condition
	$$
	\tilde w(0, x) = 0,\quad  x \in U.
	$$
	Let $\tilde F_U$ be the corresponding heat content
	$$
	\tilde F_U(s) = \int_{U} \tilde w(s,x) dx.
	$$
	
	With these definitions, we have
	$$
	F_U(s) = F_U(s) - \tilde F_U(s) + \sum_{i = 1}^{m(a)} F_{U_i} (s) = \psi(s) + \sum_{i = 1}^{m(a)} F_{U_i} (s),
	$$
	say. By scaling \eqref{eq::scalingHeatContent},
	$$
	F_{R_i U_i}(s) = R_i^2 F_{U_i}(R_i^{-2} s) = e^{-2\sigma_i} F_{U_i} (e^{2\sigma_i} s).
	$$
	Putting, for $t \in \R$,
	$$
	Z^\phi(t) = e^{2t} F_U(e^{-2t}) \quad \text{and} \quad \phi(t) = e^{2t}\psi(e^{-2t}),
	$$
	yields
	$$
	Z^\phi(t) = \phi(t) + \sum_{i = 1}^{\xi(\infty)} Z_i(t- \sigma_i),
	$$
	where the $Z_i^\phi$ are i.i.d.\ copies of $Z^\phi$.
	
	Let us now show that $\phi$ is bounded. This is done using the results of \cite{vdBG1997} about the impact on the heat content of imposing extra cooling. Put
	$$
	\lambda(\epsilon) = \vol_2 (\{ x \in \cup_{i} U_i : d(x,S) < \epsilon/\sqrt{2}  \text{ and } d(x, V) < \epsilon/\sqrt{2} \}),
	$$
	where $S$ is the fractal part of $\partial U$, i.e.\
	$$
	S = \overline{\{x \in \partial U : w(x, - ) = 1\}}.
	$$
	It is easy to check (by drawing a picture) that
	$$
	\lambda(\epsilon) \leq c_1 \epsilon^2.
	$$
	Therefore, by Corollary 1.3 of \cite{vdBG1997} and an integration by parts,
	$$
	0 \leq \psi(s)  \leq c_2 \int_0^\infty e^{-\epsilon^2/4s} \lambda(d \epsilon)
		 = c_2 2^{-1} s^{-1} \int_0^\infty \lambda(\epsilon) \epsilon e^{- \epsilon^2/4s} d \epsilon
		 \leq c_3 s \wedge c_4.
	$$
	From this, it follows that $\phi$ is bounded.
	
	Now reasoning as in the proof of Theorem \ref{thm::minkowskiContentSnowflakeSLLN} shows that
	$$
	s^{\gamma/2 -1} F_U(s) \to \cF M_\infty, \text{ a.s.\ and in } L^1, 
	$$
	as $s \to 0$ for some positive constant $\cF$.
	
	To conclude, recall that $D$ is the union of three i.i.d.\ copies of $U$, say $U_1, \dots, U_3$. Furthermore, by the estimate of \cite{vdBG1997} again, we have
	$$
	E_D(s) = \sum_{i= 1}^3 F_{U_i}(s) + O(s),
	$$
	as $s \to 0$. It follows that
	$$
	s^{\gamma/2 -1} E_D(s) \to \cE N_\infty, \text{ a.s.\ and in } L^1,
	$$
	as $s \to 0$, where $\cE = 3 \cF$ and $N_\infty$ is defined as in the proof of Theorem \ref{thm::minkowskiContentSnowflakeSLLN}, as required.
\end{proof}

% =================
% = Open question =
% =================

\section{Open question}

For the statistically self-similar snowflakes, it is natural to ask about the fluctuations of the heat content around the almost sure short time asymptotics. 

In the forthcoming paper \cite{CCH2014}, we discuss a central limit theorem for general branching processes and apply it to study the fluctuations of the spectrum of some statistically self-similar fractals with Dirichlet weights.

More work on general branching processes is required before that central limit theorem can be applied to the snowflakes discussed here. However, it naturally leads to the following conjecture.

\begin{conjecture}
Let $D$ be a statistically self-similar snowflake of Section \ref{sec::selfSimilar}. Then,
$$
s^{- \gamma/4}(s^{\gamma/2 - 1 }E_D(s) - \cE N_\infty) \to_d Y_\infty,
$$
where $\cE$ and $N_\infty$ are defined in Section \ref{sec::selfSimilar} and $Y_\infty$ is a random variable whose characteristic function has the form
$$
\bE\left[e^{i \theta Y_\infty}\right] = \bE \left[e^{-\frac 12 \theta^2 \sigma^2 N_\infty}\right],
$$
for some $\sigma \in (0, \infty)$.
\end{conjecture}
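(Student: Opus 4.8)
The plan is to recast the problem as a second-order limit theorem for the general branching process underlying $D$, reusing the characteristic $\phi$ built in the proof of Theorem \ref{thm::heatContentAsymptotics}. Recall that there, for a third $U$ of the snowflake, one sets $s = e^{-2t}$ and $Z^\phi(t) = e^{2t} F_U(e^{-2t})$, obtaining $e^{-\gamma t} Z^\phi(t) \to \cF M_\infty$. Since $s^{-\gamma/4} = e^{\gamma t/2}$ and $s^{\gamma/2 - 1} F_U(s) = e^{-\gamma t} Z^\phi(t)$, the statement for $U$ is exactly the fluctuation limit
$$
e^{\gamma t/2}\big(e^{-\gamma t} Z^\phi(t) - \cF M_\infty\big) \to_d Y_\infty^U, \qquad t \to \infty,
$$
with $Y_\infty^U$ conditionally Gaussian. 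Every moment condition needed below is automatic here: $A$ is bounded, so $\xi(\infty)$ is bounded and $\phi$ is bounded (the latter shown via \cite{vdBG1997}), giving all second-moment and uniform-integrability requirements for free.

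The heart of the matter is a central limit theorem for the counting process. Decomposing at the first generation, the centred and rescaled quantity $\tilde D(t) := e^{\gamma t/2}(e^{-\gamma t} Z^\phi(t) - \cF M_\infty)$ satisfies
$$
\tilde D(t) = e^{-\gamma t/2}\phi(t) + \sum_{i=1}^{\xi(\infty)} e^{-\gamma \sigma_i/2}\,\tilde D_i(t - \sigma_i),
$$
where the $\tilde D_i$ are i.i.d.\ copies and $e^{-\gamma\sigma_i/2} = R_i^{\gamma/2}$. As $\phi$ is bounded, the source $e^{-\gamma t/2}\phi(t) \to 0$, so any distributional limit must solve the smoothing-transform fixed-point equation $Y_\infty^U \stackrel{d}{=} \sum_i R_i^{\gamma/2} Y_\infty^{U,(i)}$. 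The key structural observation is that the \emph{variance} kernel carries weight $e^{-\gamma\sigma_i} = (R_i^{\gamma/2})^2$, so that $\bE[\sum_i e^{-\gamma\sigma_i}\,\cdot\,] = \nu_\gamma$ is a \emph{probability} measure by the Malthusian condition $\nu_\gamma(\infty) = 1$. Writing $v(t) = \mathrm{Var}\,\tilde D(t)$, the recursion yields a renewal equation $v = v * \nu_\gamma + g$ with integrable source $g$; the key renewal theorem, valid since $\nu_\gamma$ is non-lattice (Assumption \ref{ass::nonLattice}) with finite first moment, gives $v(t) \to \big(\int_0^\infty g(s)\,ds\big)\big/\big(\int_0^\infty s\,\nu_\gamma(ds)\big) \in (0,\infty)$. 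This is precisely what fixes the fluctuations at scale $e^{\gamma t/2}$ and rules out any exponential or polynomial correction. One then checks that the conditionally Gaussian law with $\bE[e^{i\theta Y_\infty^U}] = \bE[e^{-\frac12 \theta^2 \sigma_0^2 M_\infty}]$ solves the fixed-point equation: using the martingale self-similarity $M_\infty = \sum_i R_i^\gamma M_\infty^{(i)}$, conditioning on the ratios makes $\sum_i R_i^{\gamma/2} Y^{(i)}$ Gaussian with variance $\sigma_0^2 \sum_i R_i^\gamma M_\infty^{(i)} = \sigma_0^2 M_\infty$, matching the target characteristic function.

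To upgrade this self-consistency into genuine convergence, I would run the decomposition along the cut $\Lambda_u$ and let $t \to \infty$ with $u$ large but fixed, writing $\tilde D(t)$ as a sum of $|\Lambda_u|$ nearly independent rescaled subtree fluctuations plus a boundary remainder. Conditionally on $\cF_u$, a Lindeberg-type CLT applies to this triangular array: the conditional variances sum to $\sigma_0^2 M_u \to \sigma_0^2 M_\infty$, and individual negligibility follows from the boundedness of the building blocks. Taking $t \to \infty$ first (to obtain Gaussian summands) and then $u \to \infty$ (to replace $M_u$ by $M_\infty$), a characteristic-function computation identifies the limit. Transferring back to the snowflake, since $E_D(s) = \sum_{j=1}^3 F_{U_j}(s) + O(s)$ and $s^{-\gamma/4} s^{\gamma/2 - 1} O(s) = O(s^{\gamma/4}) \to 0$, the three conditionally independent copies combine into a conditionally Gaussian limit with variance $\sigma_0^2 \sum_{j=1}^3 M_\infty^{(j)} = \sigma^2 N_\infty$, where $\sigma^2 = 3\sigma_0^2$ and $N_\infty$ is the variable of Theorem \ref{thm::minkowskiContentSnowflakeSLLN}.

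The main obstacle is exactly the central limit theorem for general branching processes invoked above. Unlike the strong law (Theorem \ref{thm::NermanSLLN}), a second-order renewal theorem delivering this conditionally Gaussian limit is not available in the literature in the required generality, in particular because the characteristic $\phi$ depends on the progeny and is not independent of $\xi$. Making the Lindeberg step and the interchange of the limits $t \to \infty$ and $u \to \infty$ rigorous, while controlling the late-generation remainder uniformly, is the substantial analytic work deferred to \cite{CCH2014}; this is why the statement is recorded here only as a conjecture.
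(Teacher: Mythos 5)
This statement is recorded in the paper as a \emph{conjecture}: the paper offers no proof of it, only the remark that a central limit theorem for general branching processes is being developed in \cite{CCH2014} and that ``more work'' is needed before it applies to these snowflakes. So there is no paper proof to measure you against, and your write-up should be judged as what it is: a roadmap, not a proof. As a roadmap it is the natural one and matches the paper's evident intent. Your reduction of the snowflake statement to a fluctuation theorem for $Z^\phi$ is correct (the exponent bookkeeping $s^{-\gamma/4}=e^{\gamma t/2}$, $s^{\gamma/2-1}F_U(s)=e^{-\gamma t}Z^\phi(t)$ checks out), the first-generation recursion for $\tilde D(t)$ with weights $e^{-\gamma\sigma_i/2}=R_i^{\gamma/2}$ is right, the observation that the variance kernel is exactly the Malthusian probability measure $\nu_\gamma$ is the correct explanation of the scale $s^{-\gamma/4}$, the verification that the conditionally Gaussian law with variance $\sigma_0^2 M_\infty$ solves the smoothing-transform fixed point is correct, and the assembly of the three copies (with $\sigma^2=3\sigma_0^2$ and the $O(s^{\gamma/4})$ disposal of the corner term) is fine.

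The genuine gap is the one you name yourself, and it is exactly why the statement is a conjecture rather than a theorem: the Lindeberg/renewal step establishing convergence to the fixed point, for a general branching process whose characteristic $\phi$ depends on the whole progeny and is not independent of $\xi$, does not exist in the literature and is not supplied by your sketch. Two points in your outline understate the difficulty. First, the claimed variance recursion $v = v*\nu_\gamma + g$ is not exact: $\tilde D_i(t-\sigma_i)$ is evaluated at a random time that is correlated with the weight $e^{-\gamma\sigma_i/2}$, the characteristic $\phi_\emptyset$ is correlated with all the subtree quantities, and $\bE\,\tilde D(t)\neq 0$ at finite $t$, so cross terms and centring errors all enter $g$; one must show these are (directly Riemann) integrable, and separately that $\int g>0$ so that $\sigma>0$ and the limit is non-degenerate. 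Second, the interchange of the limits $t\to\infty$ and $u\to\infty$ along the cut $\Lambda_u$, with uniform control of the late-generation remainder, is precisely the ``substantial analytic work'' you defer --- without it the argument shows only that the conjectured law is self-consistent, not that convergence holds. In short: your proposal correctly locates the obstruction and the expected mechanism, in agreement with the paper's own discussion, but it does not close the gap, and the statement remains open.
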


% ===================
% = Acknowledgments =
% ===================

\section{Acknowledgments}

I would like to thank Dmitry Belyaev, Ben Hambly, Sean Ledger and Sam Watson for related discussions, and Kolyan Ray for comments on an earlier version of this paper. The financial support of the Berrow Foundation and the Swiss National Science Foundation is gratefully acknowledged.

% ================
% = Bibliography =
% ================

\bibliographystyle{alpha}
\bibliography{references}

\end{document}